\documentclass[a4paper,12pt]{article}

\usepackage{amsthm,amsmath,stmaryrd,bbm,hyperref,geometry,color}
\usepackage[utf8]{inputenc}
\usepackage{amssymb}
\usepackage[english]{babel}
\usepackage{graphicx}
\usepackage{amsfonts,amssymb}
\usepackage{verbatim}
\usepackage{enumitem}
\usepackage[all]{xy}

\setcounter{tocdepth}{2}
\geometry{hscale=0.8,vscale=0.85,centering}

\newcommand{\po}{\left(}
\newcommand{\pf}{\right)}
\newcommand{\co}{\left[}
\newcommand{\cf}{\right]}
\newcommand{\cco}{\llbracket}
\newcommand{\ccf}{\rrbracket}
 
\newcommand{\T}{\mathbb T}

\newcommand{\N}{\mathbb N} 
\newcommand{\dd}{\text{d}}

\renewcommand{\1}{\mathbbm{1}}
\newcommand{\X}{\mathbf{X}}
\newcommand{\Y}{\mathbf{Y}}
\newcommand{\bX}{\mathbf{\overline{X}}}
\newcommand{\nv}[1]{#1}
\newcommand{\nvd}[1]{#1}
\newcommand{\nvdd}[1]{#1}

\newtheorem{thm}{Theorem}

\newtheorem{lem}[thm]{Lemma}

\newtheorem{prop}[thm]{Proposition}
\newtheorem{cor}[thm]{Corollary}

\title{Convergence of \nvd{ a particle approximation for the quasi-stationary distribution of a diffusion process}: uniform estimates in a compact soft case.}
\author{Lucas Journel, Pierre Monmarché}

\begin{document}
\maketitle

\abstract{We establish the convergences (with respect to the simulation time $t$; the number of particles $N$; the timestep $\gamma$) of a \nvd{Moran/Fleming-Viot type particle scheme} toward the quasi-stationary distribution of a diffusion on the $d$-dimensional torus, killed at a smooth  rate. In these conditions, quantitative bounds are obtained that, for each parameter ($t\rightarrow \infty$, $N\rightarrow \infty$ or $\gamma\rightarrow 0$) are independent from the two others.}

\section{Introduction}

\subsection{The problem}

Start from the diffusion on the $d$-dimensional periodic flat torus $\mathbb T^d$
\begin{eqnarray}\label{Eq-EDS}
\dd Z_t &=& b(Z_t) \dd t + \dd B_t
\end{eqnarray}
with $b\in\mathcal C^1(\mathbb T^d)$, where $(B_t)_{t\geqslant0}$ is a $d$-dimensional Brownian motion. Add a  killing rate $\lambda \in\mathcal C(\mathbb T^d)$ and, given a standard  exponential random variable $E$ independent from $(Z_t)_{t\geqslant 0}$, define the death time
\begin{eqnarray}\label{Eq-DeathTime}
T &=& \inf\left\{t\geqslant 0,\ E \leqslant \int_0^t \lambda(Z_s) \dd s\right\}\,.
\end{eqnarray}
Then a probability measure $\nu$  on $\mathbb T^d$ is said to be a quasi-stationary distribution (QSD) associated to the SDE \eqref{Eq-EDS} and the rate $\lambda$ if
\[ \mathcal Law( Z_0)= \nu \qquad \Rightarrow \qquad \forall t\geqslant 0,\ \mathcal Law(Z_t\ |\ T>t) = \nu\,.\]
In our case, there exists a unique QSD $\nu_*$ and, whatever the initial distribution $\eta_0$ of $Z_0$,
\[\mathcal Law(Z_t\ | \ T>t ) \ \underset{t\rightarrow \infty}\longrightarrow \ \nu_*\]
see e.g. \cite[Theorem 2.1]{ChampVilK2018}) \nv{ or Corollary \ref{Cor-tempslongContinuNonLin} below}.

\nv{The present work is dedicated to the proof of convergence of an algorithm designed to approximate $\nu_*$.} \nvd{This is classically done through a system of $N$ interacting particles  whose empirical measure converges to $\mathcal Law(Z_t\ | \ T>t )$ as $N\rightarrow\infty$, where killed particles are resurrected in a suitable way in order to keep constant the size of the system (while a naive Monte Carlo simulation would see the sample shrink along time).
This question has already been addressed by many authors in various contexts, see the discussion in Section~\ref{Sec:previous_works} below. Before introducing the algorithm, stating our results and comparing them with previous works, for now, let us simply highlight the main  specificities of the present work.

The first novelty is that we take into account the time-discretization of the continuous-time diffusion. That way, we establish error bounds between the theoretical target QSD and the empirical measure indeed obtained with an actual implementation of the algorithm. There are three sources of errors: first, the continuous-time SDE \eqref{Eq-EDS} has to be discretized with some time-step parameter $\gamma>0$. Second,  as will be detailed below, a non-linearity in the theoretical algorithm has to be approximated by a system of $N$ particles. This leads to the definition of an ergodic Markov chain whose invariant measure is close, in some sense, for large $N$, to the QSD of the time-discretization of the diffusion. But then this Markov chain is only run for a finite simulation time $t=m\gamma$, $m\in\mathbb N$. A third error term then comes from the fact that stationarity is not fully achieved. We will obtained quantitative error bounds in $\gamma$, $N$ and $t$.}

\nvd{A second specifity is that the bound obtained for each parameter will be uniform in the other two. For instance, the only other work in which the long-time convergence of the chain is proven to be, under some (restrictive) conditions, uniform in $N$, is \cite{CloezThai} in a finite state space. Besides, our work is quite close in spirit to this work of Cloez and Thai. The question of the dependency or uniformity of the estimates in other previous works will be further discussed in Section~\ref{Sec:previous_works}.}

\nvd{Finally, although it was not the primary motivation of the present work, it seems that the particular definition of the system of interacting particles considered here, in particular the rebirth mechanism, was not considered in previous works (where, basically, killed particles are resurrected at the position of one of the other particles). Our variant is initially motivated by the property stated in Proposition~\ref{Prop-EgaliteLoi} below, which has been indicated to the second author by Bertrand Cloez. Yet, this variant has the unintended advantage to be both discrete in time and non-failable, in the sense that it is well-defined for all times, even though all particles die simultaneously from time to time (see also \cite{Villemonais2} on this question).}

Note that we restrict the study to a compact state space. Moreover, we only consider soft killing at some continuous rate, and no hard killing which would correspond to the case where $T$ is the escape time from some sub-domain (see e.g. \cite{BCV,Guyaderhard}). Finally, as will be seen below, as far as the long-time behaviour of the process is concerned we will work in a perturbative regime, namely we will assume that the variations of $\lambda$ are small with respect to the mixing time of the diffusion \eqref{Eq-EDS} \nvd{ (while $\|\lambda\|_\infty$ itself is not required to be small)}. \nvd{These very restrictive conditions, which rule out many cases of practical interest, have to be considered in   light of our very strong results (Theorems~\ref{Theorem-Inter} and \ref{Theorem-Main} below and all the corollaries of Section~\ref{Sec-conclusion}, gathered in Figure~\ref{Figure_Corollaires}).} In fact, although already interesting by itself, this restricted framework can be thought as a toy model motivated in particular by the case that arises in the parallel replica algorithm \cite{LelievreLeBris}. In that case, $T$ is the escape time for \eqref{Eq-EDS} from a bounded metastable domain, so that the lifespan of the process is expected to be larger than its mixing time (and to depend little from the initial condition, given it is far enough from the boundary). Hence, the compact and perturbative assumptions are consistent with this objective. The restriction to smooth killing rate, however, is made to avoid additional difficulties in the hard case where, even in the metastable case, the probability  to leave the domain is high (and exhibits high variations) when the process is close to its boundary. \nvd{The initial motivation of the present study was to  test the general strategy of the proof (via coupling aguments) in a first simple case, with the goal of extending it later on to the metastable hard case by combining it with some Lyapunov arguments to control the variations of the killing rate near the boundary}. This study is postponed to future work.

\bigskip

\nvdd{This work is organized as follows. The algorithm and main results are presented in Section~\ref{SubSec-DefProcess}, and the relation with previous works is discussed in Section~\ref{Sec:previous_works}. Section~\ref{Sec-proofs} contains the proofs, and more precisely: a general coupling argument, which is the central tool for all our results,  is presented in Section~\ref{Subsec:basiccoupling}; the basic bounds in term of  $t\rightarrow +\infty$, $N\rightarrow +\infty$ and $\gamma\rightarrow 0$ are then  stated and proven respectively in Sections~\ref{SubSection-Tempslong}, \ref{Subsec:propchaos} and \ref{Sec-continu}; finally, these basic results are combined in Section~\ref{Sec-conclusion}, concluding the proofs of the main theorems and inducing  a number of corollaries.
}

\subsection*{Notations and conventions}

We respectively denote $\mathcal P(F)$ and $\mathcal B(F)$ the set of probability measures and of Borel sets of a Polish  space $F$. Functions on $\T^d$  are sometimes identified to $[0,1]^d$-periodic functions, and similar non-ambiguous identifications are performed, for instance if $x\in\T^d$ and $G$ is a $d$-dimensional standard gaussian random variable, $x+G$ has to be understood in $\T^d$, etc. A Markov kernel $Q$ on $F$ is  indiscriminately  understood as, first, a function from $F$ to $\mathcal P(F)$, in which case we denote $Q:x\mapsto  Q(x,\cdot)$ (where $Q(x,\cdot)$ denotes the probability $A\in\mathcal B(F)\mapsto Q(x,A)\in[0,1]$); second, a Markov operator on bounded measurable functions on $F$, in which case we denote $Q:f\mapsto Qf$ (where $Qf(x) = \int f(w) Q(x,\dd w)$); third, by duality, a function on $\mathcal P(F)$, in which case we denote $Q:\mu\mapsto \mu Q$ (so that $\mu (Qf) = (\mu Q)f$). In particular, $Q(x,\cdot)= \delta_x Q$ for $x\in F$. 
If $\mu \in \mathcal P(F)$ and $k\in\N_*$, we denote $\mu^{\otimes k}\in\mathcal P(F^k)$ the law of a $k$-uplet of independent random variables with law $\mu$. Similarly, if $Q$ is a Markov kernel on $F$, we denote $Q^{\otimes k}$ the kernel on $F^k$ such that $Q^{\otimes k}(x,\cdot) = Q(x_1,\cdot)\otimes\dots\otimes Q(x_k,\cdot)$ for all $x=(x_1,\dots,x_k)\in F^k$. 
We denote $\mathcal E(1)$ the exponential law with parameter 1, $\mathcal U(I)$ the uniform law on a set $I$ and $\mathcal N(m,\Sigma)$ the Gaussian law with mean $m$ and variance matrix $\Sigma$.   We use bold letters for random variables in $\T^{dN}$ and decompose them in $d$-dimensional coordinates, like $\X = (X_1,\dots,X_N)$ with $X_i\in\T^d$, or $\X_1 = (X_{1,1},\dots,X_{N,1})$.

\subsection{The algorithm and main result}\label{SubSec-DefProcess}

Starting from the diffusion \eqref{Eq-EDS} killed at time $T$ given by \eqref{Eq-DeathTime}, we introduce two successive approximations. The first is time discretization. For a given time step $\gamma>0$ and a sequence $(G_k)_{k\in\mathbb N}$ of independent random variables with law $\mathcal N(0,I_d)$, we consider the Markov chain on $\T^d$ given by $\tilde Z_0 = Z_0$ and
\begin{eqnarray}\label{Eq-EulerSchemeEDS}
\forall k\in\N\,,\qquad\tilde Z_{k+1} &=& \tilde Z_k + \gamma b(\tilde Z_k) + \sqrt{\gamma} G_k
\end{eqnarray}
and, given $E\sim \mathcal E(1)$ independent from $(G_k)_{k\in\mathbb N}$ and $Z_0$,
\[\tilde T \ = \ \inf\left\{t = n\gamma,\ n\in\mathbb N_*,\ E \leqslant \gamma \sum_{k=1}^n \lambda(\tilde Z_k) \right\}\,.\]
From classical results for Euler schemes of diffusions \nv{(see e.g. \cite{Euler})}, it is quite clear  that, for any   $A\in\mathcal B( \mathbb T^d)$ and all $t\geqslant 0$,
\[\mathbb P \po \tilde Z_{\lfloor t/\gamma\rfloor} \in A,\ \tilde T < t \pf \ \underset{\gamma \rightarrow 0}\longrightarrow \ \mathbb P \po Z_t \in A,\ T<t\pf\,,\]
 from which, for all $t\geqslant 0$,
\[\mathcal Law\po \tilde Z_{\lfloor t/\gamma\rfloor} \ |\ \tilde T < t\pf \ \underset{\gamma \rightarrow 0}\longrightarrow \ \mathcal Law\po  Z_{t} \ |\  T < t\pf \]
(we will prove this, see Corollary \ref{Cor-concl3} below). Note that, from the memoryless property of the exponential law, given a  sequence $(U_k)_{k\in\mathbb N}$ of independent variables uniformly distributed over $[0,1]$ and independent from $(G_k)_{k\in\mathbb N}$ and $Z_0$, then $((\tilde Z_n)_{n\in\N},\tilde T)$ has the same joint distribution as $((\tilde Z_n)_{n\in\N},\hat T)$ with
\[\hat  T \  = \ \inf\left\{t = n\gamma,\ n\in\mathbb N_*,\ U_n \leqslant p(\tilde Z_n)\right\}\]
where $p(z)  =1 - \exp(-\gamma \lambda(z))$ is the probability that, arriving at state $z$, the chain is killed.

A naive Monte Carlo sampler for the QSD would be to simulate $N$ independent copies of the chain \eqref{Eq-EulerSchemeEDS} killed with probability $z\mapsto p(z)$ and to consider after a large number of iterations the distribution of the copies that have survived. However, after a long time, most copies (possibly all) would have died and the estimator would be very bad. \nvd{To tackle this issue, we have to introduce a rebirth mechanism to reintegrate  dead particles in the system.}

Denote $K:\mathbb T^d \rightarrow \mathcal P(\mathbb T^d)$ the Markov kernel associated with the transition \eqref{Eq-EulerSchemeEDS}, i.e.
\[Kf(x) \ = \ (2\pi)^{-d/2}\int_{\mathbb R^d} f\po x + \gamma b(x) + \sqrt{\gamma} y\pf e^{-\frac12 |y|^2}\dd y\,.\]
For $\mu \in \mathcal P(\mathbb T^d)$, let $Q_\mu$ be the Markov kernel such that, for all $x\in \mathbb T^d$, $Q_{\mu}(x,\cdot)$ is the law of the random variable $X$ defined as follows. 
 Let $(X_k,U_k)_{k\in\N}$ be a sequence of independent random variables such that, for all $k\in\N$, $X_k$ and $U_k$ are independent, $U_k\sim \mathcal U([0,1])$ and $X_0 \sim K(x,\cdot)$ while, for $k\geqslant 1$, $X_k\sim \mu K$. Let $H=\inf\{k\in\N,\ U_k \geqslant p(X_k)\}$, and set $X=X_{H}$. Since $\lambda$ is bounded, $p$ is uniformly bounded away from 1 and thus $H$ is almost surely finite, so that $Q_\mu$ is well-defined.

In other words, a random variable $X\sim Q_\mu(x,\cdot)$ may be constructed through the following algorithm (in which \emph{new} means: independent from all the  variables previously drawned).
\begin{enumerate}
\item Draw  $X_{0} \sim \mathcal N\po x + \gamma b(x) , \gamma  I_d\pf$ and a new $U_0\sim \mathcal U([0,1])$.
\item If $U_0 \geqslant p(X_{0})$, set $X = X_{0}$ in $\mathbb T^d$ (in that case, we say the particle has moved from $x$ to $X_0$ without dying).
\item If $U_0 <p(X_{0})$ then set $i=1$ and, while $X$ is not defined, do:
\begin{enumerate}
\item Draw a new $X_{i}'$ distributed according to $\mu$, a new $X_{i} \sim \mathcal N\po X_{i}' + \gamma b(X_{i}') , \gamma I_d \pf$ and a new $U_i\sim \mathcal U([0,1])$.
\item  If $U_i \geqslant p(X_{i})$, set $X = X_{i}$ in $\mathbb T^d$  (in that case, we say the particle has died, resurrected at $X_{i}'$, moved to $X_{i}$ and survived).
\item  If $U_i < p(X_{i})$, set $i\leftarrow i+1$ (in that case, we say the particle has died, resurrected at $X_{i}'$, moved to $X_{i}$ and died again) and go back to step (a).
\end{enumerate}
\end{enumerate}

From this, we define a chain $(Y_k)_{k\in\mathbb N}$ as follows.  Set $Y_0 = Z_0$ and suppose that $Y_k$ has been defined for some $k\in\mathbb N$. Let $\eta_k = \mathcal Law(Y_k)$, and draw a new $Y_{k+1} \sim Q_{\eta_k}(Y_k,\cdot)$. 
This somewhat intricate definition is motivated by the following results (whose proof is postponed to Section \ref{Sec-proofs}):
\begin{prop}\label{Prop-EgaliteLoi}
For all $n\in\mathbb N$
\[\eta_n \ = \ \mathcal Law\po \tilde Z_n  \ | \ \tilde T \ \nv{>} \ n\gamma\pf\,.\]
\end{prop}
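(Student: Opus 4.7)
The plan is to show by induction on $n$ that both sequences $(\eta_n)_{n\in\N}$ and $(\nu_n)_{n\in\N}$, where $\nu_n := \mathcal Law\po \tilde Z_n\,|\,\tilde T > n\gamma\pf$, satisfy the same Feynman--Kac type recursion
\[\mu_{n+1}(f)\ =\ \frac{(\mu_n K)((1-p)f)}{(\mu_n K)(1-p)}\,,\]
starting from the common initial value $\eta_0=\nu_0=\mathcal Law(Z_0)$. The base case $n=0$ is immediate since $\tilde T\geqslant \gamma$ almost surely, so that no conditioning occurs at time $0$.

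For the recursion on $(\nu_n)$, I would work with the equivalent formulation involving $\hat T$ recalled just above the statement. Since the $U_k$'s are i.i.d.\ uniform and independent from the Euler chain $(\tilde Z_k)_{k\in\N}$, integrating them out yields, for any Borel $A\subset \T^d$,
\[\mathbb P\po \hat T > (n+1)\gamma\,,\ \tilde Z_{n+1}\in A \pf \ =\ \mathbb E\co \1_A(\tilde Z_{n+1}) \prod_{k=1}^{n+1}\po 1-p(\tilde Z_k)\pf\cf\,.\]
Conditioning on $\tilde Z_n$ via the Markov property of the Euler scheme, the inner expectation $\mathbb E\co \1_A(\tilde Z_{n+1})(1-p(\tilde Z_{n+1}))\,\big|\,\tilde Z_n\cf$ equals $K(\1_A(1-p))(\tilde Z_n)$. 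Dividing by the same expression evaluated at $A=\T^d$ and recognising $\nu_n$ in the resulting ratio gives the claimed recursion.

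For the recursion on $(\eta_n)$, the key step is to compute $Q_{\eta_n} f(x)$ explicitly. By construction of $Q_\mu$, conditional on the independent sequence $(X_k)_{k\in\N}$ with $X_0\sim K(x,\cdot)$ and $X_k\sim \eta_n K$ for $k\geqslant 1$, the killing index $H$ satisfies $\mathbb P(H=0)=K(1-p)(x)$ and, for $k\geqslant 1$, $\mathbb P(H=k) = Kp(x)\,((\eta_n K)(p))^{k-1}\, (\eta_n K)(1-p)$. Summing the resulting geometric series, which converges because boundedness of $\lambda$ keeps $(\eta_n K)(p)$ uniformly bounded away from $1$, yields
\[Q_{\eta_n} f(x)\ =\ K((1-p)f)(x)\ +\ \frac{Kp(x)}{1-(\eta_n K)(p)}\,(\eta_n K)((1-p)f)\,.\]
Integrating against $\eta_n$ and using $1-(\eta_n K)(p) = (\eta_n K)(1-p)$, the two terms combine into $(\eta_n K)((1-p)f)/(\eta_n K)(1-p)$, which closes the induction. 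I expect the only nontrivial point to be the bookkeeping in this last computation, in particular the careful separation of the initial transition (governed by $K(x,\cdot)$) from the subsequent resurrection attempts (governed by $\eta_n K$).
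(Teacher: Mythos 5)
Your proposal is correct and follows essentially the same route as the paper: an induction showing that both $\eta_n$ and $\mathcal Law(\tilde Z_n\,|\,\tilde T>n\gamma)$ satisfy the recursion $\mu_{n+1}f=\mu_n K[(1-p)f]/\mu_n K[1-p]$, with the $\eta$-side obtained from the geometric-series computation of $Q_\mu$ and $\mu Q_\mu$, and the $\nu$-side from integrating out the killing uniforms and the Markov property of the Euler chain. The only cosmetic caveat is your phrase ``conditional on the independent sequence $(X_k)$'' before stating unconditional probabilities such as $\mathbb P(H=0)=K(1-p)(x)$, but the displayed formula for $Q_{\eta_n}f(x)$ and the subsequent cancellation are exactly those of the paper.
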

In particular, as $n\rightarrow \infty$, the law $\eta_n$ of $Y_n$ converges toward the QSD of $\tilde Z$. Unfortunately, it is impossible to sample $(Y_k)_{k\in\mathbb N}$ in practice since this would require to sample according to $\eta_k$ for any $k\in\mathbb N$. \nvd{This is a classical case of a time-inhomogeneous Markov chain which is interacting with its own law or, similarly, of a measure-valued sequence $(\eta_k)_{k\in\N}$ with a non-linear evolution. Such processes arise in many applications,  see e.g. \cite{DelMoralbouquin,DelMoralbouquin2} and references within.}  Motivated by the Law of Large Numbers, we are lead to a second approximation, which is to use mean-field interacting particles. For a fixed $N\in\N_*$ and for $x=(x_i)_{i\in\cco 1,N\ccf}\in\mathbb T^{dN}$, we denote
\[\pi(x) \ := \ \frac{1}{N}\sum_{i=1}^N \delta_{x_i}\ \in \mathcal P( \mathbb T^d)\]
the associated empirical distribution. Then we define the Markov operator $R$ on $\mathbb T^{dN}$ as
\[R\po x,\cdot \pf \ = \  Q_{\pi(x)}(x_1,\cdot)\otimes \dots \otimes Q_{\pi(x)}(x_N,\cdot)\,.\]
 In other words, a random variable $\Y\sim Q(x,\cdot)$ is such that the $Y_i$'s are independent with $Y_i \sim Q_{\pi(x)}(x_i,\cdot)$.  In order to specify the parameters involved, we will sometimes write $R_{N,\gamma} $ for $R$.



Let us informally describe the transitions of such a Markov chain $(\X_k)_{k\in\N}$: the $i^{th}$ particle follows the transition given by \eqref{Eq-EulerSchemeEDS} independently from the other particles until it dies. If it dies at a step $k\in\N_*$, then it is resurrected on another particle $X_{J,k-1}$ with $J$ uniformly distributed over $\cco 1,N\ccf$ (in particular and contrary to \nvd{most previous works on similar algorithms, $J=i$ is not excluded, although it doesn't change much since its probability vanishes as $N\rightarrow\infty$}) and immediatly performs a step of \eqref{Eq-EulerSchemeEDS}; if it dies again after this unique step, it is resurrected again and performs a new step, and so on until it is not killed after a resurrection and an Euler scheme step. Then this is the new value $X_{i,k}$ from which the particle follows again the transitions \eqref{Eq-EulerSchemeEDS} until its next death, etc.

Note that there is no problem of simultaneous death since at step $k$ the particles are resurrected on positions at step $k-1$, which are well-defined even if all particles die at once at step $k$.

It is easily seen that $R$ admits a unique invariant measure toward which the law of the associated Markov chain converges exponentially fast (in the total variation sense for instance), but a naive argument yields a convergence rate that heavily depends on $N$ (and possibly $\gamma$). Similarly, classical studies can be conducted for the limits $N\rightarrow \infty$ and $\gamma\rightarrow 0$  but again with estimates that are typically exponentially bad with respect to the total simulation time (see the references in Section~\ref{Sec:previous_works} or Propositions~\ref{Prop-chaos} and \ref{Prop-continu}). In the following we will focus on a somewhat perturbative regime under which we will establish estimates for each of these limits that are uniform with respect to the other parameters. Even for the continuous-time process (\nvd{corresponding to $\gamma =0$ , see Section~\ref{Sec-continu} for the definition}), such uniform results are new (see Corollaries \ref{Cor-tempslong-continu} and \ref{Cor-chaos-continu}).


Recall that the $\mathcal W_1$ Wasserstein distance between $\mu,\nu\in\mathcal P(\T^d)$ is defined by
\[\mathcal W_1\po\mu,\nu\pf \ = \ \inf\left\{\mathbb E\po |X-Y|\pf\, :\ X\sim \mu,\ Y\sim\nu\right\}\,.\]
\nvdd{
More generally, for $\rho$ a distance on some Polish space $F$, denote $\mathcal W_\rho$ the corresponding Wasserstein distance on $\mathcal P(F)$, defined by
\begin{eqnarray}\label{Eq-def-Wasserstein}
\mathcal W_\rho\po\mu,\nu\pf &=& \inf\left\{\mathbb E\po \rho(X,Y)\pf\,:\ X\sim \mu,\ Y\sim\nu\right\}\,.
\end{eqnarray}
 If $X\sim\mu$ and $Y\sim\nu$, we call $(X,Y)$ a coupling of $\mu$ and $\nu$. If $(X,Y)$ is a coupling for which the infimum in \eqref{Eq-def-Wasserstein} is attained, we say that it is an optimal coupling. From \cite[Corollary 5.22]{Villani}, such an optimal coupling always exists.

Our first main result is a long-time convergence rate uniform in $N$:
\begin{thm}\label{Theorem-Inter}
There exist $c_1,c_2,\gamma_0>0$ and a distance $\rho$ on $\T^d$ equivalent to the Euclidean distance,  that depend only on the drift $b$ and the dimension $d$, such that, if $\lambda$ is Lipschitz with a constant $L_\lambda$ and
\begin{eqnarray}\label{Eq-ConditionPerturb}
\kappa & := &  c_1 - c_2 L_\lambda e^{\gamma \|\lambda\|_\infty}\,,
\end{eqnarray}
then the following holds: for all $\gamma\in (0,\gamma_0]$, 	$N\in\N$ and all $\mu,\nu\in\mathcal P(\T^{dN})$, considering the distance $\rho_N(x,y)=\sum_{i=1}^N \rho(x_i,y_i)$ for $x,y\in \T^{dN}$,
\begin{align*}
\mathcal W_{\rho_N}\po \mu R_{N,\gamma},\nu R_{N,\gamma}\pf \ \leqslant \ \po 1-\gamma \kappa \pf\mathcal W_{\rho_N}(\mu,\nu)\,.
\end{align*}
As a consequence, there exists $C>0$ that depends only on $b$ and $d$ such that for all $\gamma\in (0,\gamma_0]$, 	$m, N\in\N$ and all $\mu,\nu\in\mathcal P(\T^{dN})$,
\begin{align*}
\mathcal W_{1}\po \mu R_{N,\gamma}^m,\nu R_{N,\gamma}^m\pf \ \leqslant \ CN \po 1-\gamma \kappa \pf^m\,.
\end{align*}
\end{thm}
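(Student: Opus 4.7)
The strategy is to construct, for each $x,y\in\T^{dN}$, an explicit coupling $(\X,\Y)$ of $\delta_x R_{N,\gamma}$ and $\delta_y R_{N,\gamma}$ satisfying $\mathbb E[\rho_N(\X,\Y)]\leq(1-\gamma\kappa)\rho_N(x,y)$; the Wasserstein bound then follows. The core building block is a contractive coupling of the single-particle Euler kernel $K$ on the torus. Following an Eberle-type reflection coupling, I would take $\rho$ of the form $\rho(x,y)=f(|x-y|)$ for a suitable concave increasing $f$, equivalent to the Euclidean distance, and obtain constants $c_1,\gamma_0>0$ depending only on $b,d$ such that for every $\gamma\in(0,\gamma_0]$ and $x,y\in\T^d$ one has $\mathcal W_\rho(\delta_x K,\delta_y K)\leq(1-\gamma c_1)\rho(x,y)$. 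This is a standard construction on the torus that does not require any contractivity of $b$, so I would invoke it as a known ingredient rather than re-derive it.

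To lift this to $R_{N,\gamma}$, I would couple $Q_{\pi(x)}(x_i,\cdot)$ and $Q_{\pi(y)}(y_i,\cdot)$ for each $i\in\cco 1,N\ccf$ by running the algorithm of Section~\ref{SubSec-DefProcess} in parallel with shared randomness. At iteration $k\geq 0$ of the while loop I would (i) draw $(X_{i,k},Y_{i,k})$ from the step-1 coupling of $K$, starting from $(x_i,y_i)$ if $k=0$ and from the previous resurrection pair otherwise; (ii) use the same $U_{i,k}\sim\mathcal U([0,1])$ in both chains to decide killing; (iii) at resurrection steps, draw a common uniform index $J_{i,k}\in\cco 1,N\ccf$ and take $(x_{J_{i,k}},y_{J_{i,k}})$ as resurrection points, which optimally couples $\pi(x)$ and $\pi(y)$ with expected cost $\rho_N(x,y)/N$. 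Under (ii), the two chains die together with probability $\min(p(X_{i,k}),p(Y_{i,k}))$, survive together with probability $1-\max(p(X_{i,k}),p(Y_{i,k}))$, and the \emph{discrepancy} event in which exactly one chain dies while the other outputs its position has probability $|p(X_{i,k})-p(Y_{i,k})|\leq C_\rho\gamma L_\lambda\rho(X_{i,k},Y_{i,k})$, with $C_\rho$ the equivalence constant between $\rho$ and the Euclidean metric.

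Summing along the while loop is the main calculation. Letting $W_k$ denote the event ``both chains are still alive entering iteration $k$'', one has $\mathbb P(W_{k+1}\mid W_k)\leq 1-e^{-\gamma\|\lambda\|_\infty}$, hence $\sum_{k\geq 0}\mathbb P(W_k)\leq e^{\gamma\|\lambda\|_\infty}$, which is precisely how this factor enters $\kappa$. On $W_k$, the ``both survive'' branch contributes on average $(1-\gamma c_1)$ times the $\rho$-distance of the iteration's starting point—namely $\rho(x_i,y_i)$ for $k=0$ and $\rho_N(x,y)/N$ for $k\geq 1$—while the discrepancy branch contributes at most the $\rho$-diameter $D$ of $\T^d$ times the discrepancy probability just bounded. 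Separating the $k=0$ and $k\geq 1$ terms gives
\[\mathbb E[\rho(X_i,Y_i)]\leq(1-\gamma c_1)\rho(x_i,y_i)+C\gamma L_\lambda e^{\gamma\|\lambda\|_\infty}\bigl(\rho(x_i,y_i)+\rho_N(x,y)/N\bigr)\]
with $C$ depending only on $b,d$; summing over $i$ and using $\sum_i\rho_N(x,y)/N=\rho_N(x,y)$ produces the stated one-step contraction with $\kappa=c_1-c_2L_\lambda e^{\gamma\|\lambda\|_\infty}$.

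For the iterated bound, one composes the one-step contraction $m$ times starting from any coupling of $\mu$ and $\nu$, then uses that $\rho_N$ is bounded by $C'N$ on $\T^{dN}$ (so $\mathcal W_{\rho_N}(\mu,\nu)\leq C'N$) and the equivalence of $\rho$ with the Euclidean distance to convert $\mathcal W_{\rho_N}$ into $\mathcal W_1$. The main obstacle, in my view, is the accounting in the summation step above: one must ensure that the resurrection cost $\rho_N(x,y)/N$, once summed over the $N$ particles, assembles into a single $\rho_N(x,y)$ factor rather than into $N\rho_N(x,y)$. This is exactly what the shared-index coupling (iii) achieves; using independent resurrection indices would instead produce a $\rho_N$-sized per-particle cost at each death, yielding a factor of order $N$ and ruining the contraction.
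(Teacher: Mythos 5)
Your coupling is essentially the paper's own (synchronous uniforms for the killing decisions, a common resurrection index, and a one-step contractive coupling of $K$ as in Proposition~\ref{Prop-ContractMajka}), but the bookkeeping of the branch where \emph{both} copies of particle $i$ die is where the argument breaks, and this is exactly the step that decides whether $\kappa$ involves $L_\lambda$ or $\|\lambda\|_\infty$. The probability of entering a resurrection iteration is governed by $\lambda$ itself, not by its Lipschitz constant: with your events $W_k$ one has $\mathbb P(W_1)\approx 1-e^{-\gamma\lambda}$, so the ``both survive after a resurrection'' branch contributes a term of order $\gamma\|\lambda\|_\infty e^{\gamma\|\lambda\|_\infty}\,\rho_N(x,y)/N$, not $\gamma L_\lambda e^{\gamma\|\lambda\|_\infty}\,\rho_N(x,y)/N$ as in your displayed per-particle bound. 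That displayed bound is in fact false in general: take $\lambda$ a large constant, so $L_\lambda=0$; it would assert $\mathbb E[\rho(X_i,Y_i)]\leqslant(1-\gamma c_1)\rho(x_i,y_i)$, yet for a configuration with $x_i=y_i$ but $x\neq y$ the synchronous coupling kills both copies with probability $p>0$ and resurrects them at a common random index, giving $\mathbb E[\rho(X_i,Y_i)]$ of order $p\,\rho_N(x,y)/N>0$. Carrying out your accounting as described (dropping the joint-survival factor on the $k=0$ branch and charging the resurrection branch by its probability) only yields, after summing over $i$, a contraction factor of roughly $(1-\gamma c_1)(1+C\gamma L_\lambda)e^{\gamma\|\lambda\|_\infty}$, i.e.\ a perturbative condition on $\|\lambda\|_\infty$ — strictly weaker than the theorem, which the paper stresses does not require $\|\lambda\|_\infty$ small.

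The missing idea is a cancellation between the two ``synchronized'' branches. One must keep the joint-survival weight $1-\min p$ on the no-death branch and the weight $q_*/(1-q_*)$, with $q_*\leqslant\max p$, on the joint-death branch; only after summing over $i\in\cco 1,N\ccf$ do these recombine into the single factor $\po 1-\min p+(a\beta)^{-1}\gamma L_\lambda\pf(1-c_1\gamma)/(1-q_*)$, and since $\max p-\min p\leqslant\gamma L_\lambda\sqrt d/2$ the $O(\gamma\|\lambda\|_\infty)$ contributions of numerator and denominator cancel, leaving a deficit of order $\gamma L_\lambda e^{\gamma\|\lambda\|_\infty}$ only. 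This is precisely the computation of Propositions~\ref{Prop-CouplBasic} and \ref{Prop-TempsLong}; your construction is the right one, but without this cancellation the claimed $\kappa=c_1-c_2L_\lambda e^{\gamma\|\lambda\|_\infty}$ does not follow. (Two minor points: the common-index coupling of $\pi(x)$ and $\pi(y)$ need not be optimal, though only its cost $\rho_N(x,y)/N$ is used, so this is harmless; and your passage from the one-step contraction to the iterated $\mathcal W_1$ bound, via boundedness of $\rho_N$ and equivalence of metrics, matches the paper and is fine.)
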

This means that, with respect to the metric $\rho_N$, $R_{N,\gamma}$ has a Wasserstein curvature of $\gamma \kappa$ in the sense of \cite{Joulin}.

Theorem~\ref{Theorem-Inter} is proven in Section~\ref{SubSection-Tempslong}. From this first result, similar bounds can be obtained  for large $N$ and small $\gamma$ (see Sections~\ref{Subsec:propchaos} and \ref{Sec-continu}). Combining all these results eventually yields a quantitative bound on the error made in practice by approximating $\nu_*$ by the empirical distribution of the particular particle system:}
\begin{thm}\label{Theorem-Main}
\nvdd{Under the conditions of Theorem~\ref{Theorem-Inter}, suppose that $\kappa$   given by \eqref{Eq-ConditionPerturb} is positive. There exists $C>0$ such that for all $N\in\N$, $\gamma\in (0,\gamma_0]$, $t\geqslant 0$ and $\mu_0\in\mathcal P(\mathbb T^{dN})$, if $(\X_k)_{k\in\N}$ is a Markov chain with initial distribution $\mu_0$ and transition kernel $R_{N,\gamma}$, }
\[\mathbb E \co \mathcal W_1\po \pi( \X_{\lfloor t/\gamma\rfloor}), \nu_*\pf\cf \ \leqslant \ C \po  \sqrt \gamma + \alpha(N) + e^{-\kappa t}\pf\,,\]
where  
\[\alpha(N) \ = \ \left\{\begin{array}{ll}
N^{-1/2} & \text{if }d=1\,,\\
N^{-1/2}\ln(1+N) & \text{if }d=2\,,\\
N^{-1/d} & \text{if }d>2\,.
\end{array}\right.\]
\end{thm}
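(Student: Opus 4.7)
The plan is to combine the three basic estimates already prepared in Sections~\ref{SubSection-Tempslong}, \ref{Subsec:propchaos} and \ref{Sec-continu} through a triangle inequality, each estimate controlling one of the three error terms $e^{-\kappa t}$, $\alpha(N)$ and $\sqrt \gamma$. Let $\pi_{N,\gamma}$ denote the (unique, by Theorem~\ref{Theorem-Inter}) invariant measure of $R_{N,\gamma}$, and let $\tilde \nu_*^\gamma$ denote the QSD of the discrete-time chain $(\tilde Z_n)_{n\in\N}$. Setting $m=\lfloor t/\gamma\rfloor$ and introducing a random variable $\Y\sim\pi_{N,\gamma}$, I would write
\begin{align*}
\mathbb E\bigl[\mathcal W_1(\pi(\X_m),\nu_*)\bigr] \ \leqslant \ & \mathbb E\bigl[\mathcal W_1(\pi(\X_m),\pi(\Y))\bigr] \\
& + \mathbb E\bigl[\mathcal W_1(\pi(\Y),\tilde\nu_*^\gamma)\bigr] + \mathcal W_1(\tilde\nu_*^\gamma,\nu_*)\,.
\end{align*}

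For the first (long-time) term, I would couple $\X_m$ and $\Y$ by taking an optimal $\rho_N$-coupling of $\mu_0$ and $\pi_{N,\gamma}$ and then running the Markovian coupling of $R_{N,\gamma}$ underlying Theorem~\ref{Theorem-Inter}. Since $\rho$ is equivalent to the Euclidean distance and the torus is bounded, $\mathcal W_{\rho_N}(\mu_0,\pi_{N,\gamma})\leqslant CN$, and since $\mathcal W_1(\pi(x),\pi(y))\leqslant \tfrac{1}{N}\sum_i|x_i-y_i|$, the contraction with rate $\gamma\kappa$ given by Theorem~\ref{Theorem-Inter} yields a bound of order $(1-\gamma\kappa)^m\leqslant e^{-\gamma\kappa m}\leqslant C e^{-\kappa t}$.

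For the third (continuous-time) term, Proposition~\ref{Prop-continu} will produce, for any finite horizon and any common initial law, an estimate of order $\sqrt\gamma$ between the laws of $\tilde Z$ and $Z$ conditioned on survival. Combined with the fact that both QSDs are fixed points of their respective conditioned semigroups and with long-time convergence uniform in $\gamma$ (obtained as a $\gamma$-independent rate of contraction, in the spirit of Theorem~\ref{Theorem-Inter}), letting the horizon go to infinity yields $\mathcal W_1(\tilde\nu_*^\gamma,\nu_*)\leqslant C\sqrt\gamma$. The second (propagation of chaos) term is the most delicate: I would further split it as $\mathbb E[\mathcal W_1(\pi(\Y),\pi(\Y'))]+\mathbb E[\mathcal W_1(\pi(\Y'),\tilde\nu_*^\gamma)]$ with $\Y'\sim (\tilde\nu_*^\gamma)^{\otimes N}$; the first piece is bounded via Proposition~\ref{Prop-chaos}, and the second via the standard Fournier--Guillin empirical measure estimate, which is exactly the source of the function $\alpha(N)$.

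The main obstacle, and precisely the reason Theorem~\ref{Theorem-Inter} is needed, is to make sure the constants in the bounds for propagation of chaos and for the Euler limit do not blow up with the simulation time: both Proposition~\ref{Prop-chaos} and Proposition~\ref{Prop-continu}, applied naively, give Gronwall-type constants that grow exponentially in $m\gamma$. The uniform-in-$N$ and uniform-in-$\gamma$ contraction of $R_{N,\gamma}$ established in Theorem~\ref{Theorem-Inter} is what allows one to pass to the stationary regime (and in particular to compare invariant measures rather than laws at a given finite time), thereby turning finite-horizon propagation of chaos and Euler estimates into the stationary bounds $\alpha(N)$ and $\sqrt\gamma$ that appear in the statement.
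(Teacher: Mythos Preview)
Your decomposition is correct and yields the stated bound, but it is organized a bit differently from the paper's proof. The paper does \emph{not} pass through the invariant measure $\pi_{N,\gamma}$ of $R_{N,\gamma}$. Instead, for an arbitrary auxiliary $\eta_0\in\mathcal P(\T^d)$ it couples $\X_m\sim\mu_0R^m$ with $\Y\sim\eta_0^{\otimes N}R^m$ (optimal $\rho_N$-coupling) to get the $e^{-\kappa t}$ term via Proposition~\ref{Prop-TempsLong}, and then splits
\[
\mathcal W_\rho(\pi(\Y),\nu_*)\ \leqslant\ \mathcal W_\rho(\pi(\Y),\eta_m)\ +\ \mathcal W_\rho(\eta_m,\nu_\gamma)\ +\ \mathcal W_\rho(\nu_\gamma,\nu_*)\,,
\]
applying Proposition~\ref{Prop-chaos} (finite-time, with the uniform-in-$m$ bound coming from $\kappa>0$), Corollary~\ref{Cor-nonlineairetempslong} with $\tilde\eta_0=\nu_\gamma$, and Corollary~\ref{Cor-QSDgamma}. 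Your route via the stationary objects $\pi_{N,\gamma}$ and $(\tilde\nu_*^\gamma)^{\otimes N}$ amounts to first taking $m\to\infty$ in those same propositions (which is exactly what the paper packages as Corollaries~\ref{Cor-concl1} and~\ref{Cor-QSDgamma}); both routes are equivalent and rely on the same uniform contraction from Theorem~\ref{Theorem-Inter}.

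One small correction: for the third term $\mathcal W_1(\tilde\nu_*^\gamma,\nu_*)$ you invoke Proposition~\ref{Prop-continu}, but that proposition compares the $N$-particle discrete and continuous-time systems ($\mu R^m$ versus $\mu P_{N,m\gamma}$), not the conditioned laws of $\tilde Z$ and $Z$. The relevant estimate is Corollary~\ref{Cor-concl3} (the one-step $\gamma^{3/2}$ bound between $\eta_1$ and $\overline\eta_\gamma$, then summed), whose stationary consequence is Corollary~\ref{Cor-QSDgamma}. The argument you sketch in words (finite-horizon $\sqrt\gamma$ comparison plus long-time convergence of the non-linear dynamics) is precisely how Corollary~\ref{Cor-QSDgamma} is obtained, just with the correct input.
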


\nvdd{All the constants in Theorems~\ref{Theorem-Inter} and \ref{Theorem-Main} (and all other results stated in this work) are explicit. More precisely, $c_1$ and $\gamma_0$ come from    \cite[Corollary 2.2] {Majka} (see Proposition~\ref{Prop-ContractMajka} below) where an explicit value is given, and  all the other constants involved in our results can be tracked by following the explicit computations.}

In Theorem~\ref{Theorem-Main}, the speeds of the different convergences (exponential in the simulation time, with the square-root of the timestep and with $\alpha$ of the number of particles) are optimal since they are optimal for non-interacting diffusions (i.e. the case $\lambda = 0$), see in particular \cite{FournierGuillin} for the large $N$ asymptotic.

Other intermediary results will be established in the rest of the paper that are interesting by themselves: propagation of chaos (i.e. $N\rightarrow \infty$) and continuous-time limit at a fixed time (even without the condition $\kappa>0$) respectively in Propositions \ref{Prop-chaos} and \ref{Prop-continu}. From that, results for the continuous-time process ($\gamma=0$), the equilibria ($t=\infty$) or the non-linear process ($N=\infty$), or when two parameters among three are sent to their limits, are then simple corollaries, see Section~\ref{Sec-conclusion}. \nvdd{All these results are summarised in Figure~\ref{Figure_Corollaires}.}

Note that $\exp(-\gamma \lambda(x))$ is the probability that the chain is not killed when it arrives at state $x$. The time step $\gamma$ should be chosen in such a way that this probability is relatively large, say at least one half. In that case, $\exp(\gamma \|\lambda\|_\infty)$   is typically close to 1. In other words, the positivity of $\kappa$ given by \eqref{Eq-ConditionPerturb} is mostly a condition about $L_\lambda$ being small enough.

This perturbation condition is different from the one considered in \cite{Monmarche}, where $\|\lambda\|_\infty$ rather than $L_\lambda$ is supposed to be small (while our main arguments are a direct adaptation of the coupling arguments of \cite{Monmarche}). This difference comes from the fact that, in the present study, we work with the $\mathcal W_1$ distance rather than the total variation one (which is a Wasserstein distance but associated to the discrete metric $d(x,y)=\1_{x\neq y}$). Indeed, in our coupling arguments, we need to control $|\lambda(x) - \lambda(y)|$ the difference between the death rates of two processes at different locations, which is bounded here by  $L_\lambda |x-y|$ and in \cite{Monmarche} by $2\|\lambda\|_\infty\1_{x\neq y}$. In fact our argument for the long-time convergence may easily be adapted to the total variation distance framework, following \cite{Monmarche}. Nevertheless this would be more troublesome in the study of the limit $N\rightarrow\infty$. Then, one needs to couple $\eta_k$ (that admits a density with respect to the Lebesgue measure) with $\pi(\X_k)$ (which is a sum of Dirac masses), so that the total variation distance is not adapted. This may be solved by considering $\mathcal W_1\rightarrow $ total variation regularization results for (Euler schemes of) diffusions, that can be established by coupling arguments again. Nevertheless, in order to focus on the other difficulties of the problem and for the sake of clarity, we decided to stick to the $\mathcal W_1$  distance in all the different results of this work. \nvdd{Similar Wasserstein coupling arguments have been used in \cite{CloezThai} on a similar problem (see next section) and in \cite{Villemonais3} for a different kind of mean-field interacting particle system (also with a similar perturbative condition corresponding to the fact $\sigma$ in \cite[Proposition 3.1]{Villemonais3} has to be positive, i.e. the interaction should be small with respect to the independent mixing).}

\nvd{Notice that, among all possible discretization schemes, we only considered the explicit Euler-Maruyama one. This choice was made for simplicity, but the proofs could be extended to other usual schemes. The main ingredient required is   a Wasserstein curvature of order $\gamma$ for a modified $\mathcal W_1$ distance (see  Proposition~\ref{Prop-ContractMajka}, based on \cite[Corollary 2.2]{Majka}).} \nvdd{Similarly, we only considered the case of an elliptic diffusion process with a constant diffusion matrix for simplicity (since we use \cite[Corollary 2.2]{Majka} which covers this case), although a similar Wasserstein contraction certainly holds in a much more general framework (even hypoelliptic non-elliptic, as in  continuous-time settings \cite{EberleGuillinZimmer}). As stated in the introduction, the present paper does not aim at the broadest generality, and by avoiding technicalities  we want to highlight the main issue (i.e. the question of the uniformity of bounds in the various parameters).}

\subsection{Related works}\label{Sec:previous_works} 

\nvdd{
The use of a particle system with death and re-birth to approximate the QSD of a Markov process has been introduced in \cite{Burdzy1}, for two-dimensional Brownian motions killed at the boundary of a box. This work refers to the system as a \emph{Fleming Viot process}. However, in the lecture notes of Dawson \cite{Dawson}, a (continuous-time) system of $N$ particles that move independently according to some Markov dynamics and interact through a sampling-replacement mechanism is called a \emph{Moran particle process},  while the term \emph{Fleming-Viot process} refers to a measure-valued (continuous-time) process that can be obtained as the limit of the Moran particle system as the number of particles goes to infinity. Besides, with these definitions, the empirical measure of a Moran particle process is nothing but a Fleming-Viot process in the particular case where the initial condition is the sum of $N$ Dirac measures.  Both the initial works of Moran \cite{Moran} and Fleming and Viot  \cite{FlemingViot}  are motivated by population genetics models. 

The seminal work \cite{Burdzy1} is a numerical study so that, although a continuous-time continuous density-valued process is targeted, what is really implemented is in fact a discrete-time particle system. From then, the use of similar processes in numerical schemes (for killed processes or more general non-linear problems such as non-linear filtering, rare events analysis and so on \cite{DelMoralbouquin,DelMoralbouquin2}) have been widely studied. Although the term \emph{Moran particle system} is used in \cite{MicloDelMoral2} and a few other works,   most studies concerned with quasi-stationary distributions refer to \emph{Fleming-Viot particle system}, see e.g.  \cite{ferrari2007,GrigoKang,Guyadersoft,Guyaderhard,LelievreReygner,CloezThai,Lobus,CV2018,Asselah} (for continuous-time processes, which thus corresponds   to the process introduced in \ref{Sec-continu} i.e. the limit $\gamma\rightarrow 0$). 

Different frameworks have been considered: finite space, discrete infinite space, compact and non-compact continuous space; continuous and discrete time; hard or soft killing, or more general non-linear evolutions. There are also some variants on the precise definition of the rebirth mechanism (as mentioned in the introduction, our specific scheme, where killed particles at step $k$ are resurrected on a position  of a particle at step $k-1$ and then perform an Euler step conditioned not to be killed again, seems to be new). Disregarding these differences, let us discuss the kind of results established in the existing literature.

A first set of works, starting shortly after the initial numerical study of \cite{Burdzy1}, are concerned with finite-time propagation of chaos, either for the marginal laws or at the level of a trajectory in the Skorohod topology \cite{Burdzy2,GrigoKang,Villemonais,ferrari2007,MicloDelMoral2,Asselah}, possibly with a precise CLT \cite{Guyadersoft,Guyaderhard}. Similarly,  propagation of chaos and/or CLT as $N\rightarrow +\infty$ at stationarity (i.e. for the invariant measure of the Fleming-Viot particle system) are established e.g. in \cite{Asselah,LelievreReygner}. Uniform in time propagation of chaos is established in \cite{MicloDelMoral,DelMoralGuionnet,Rousset}. Contrary to our results, this uniformity in time is not obtained with a long-time convergence of the particle system at a speed independent from $N$, but rather from the long-time convergence of the limit ($N=+\infty$) non-linear process. This long-time convergence for the non-linear process (or, equivalently according to Proposition~\ref{Prop-EgaliteLoi}, for the process conditionned to survival) has recently been studied in general settings, in particular in a sery of work by Champagnat,  Villemonais and coauthors, see e.g.   \cite{ChampVilK2018,CV2020,DelMoralVillemonais2018,Bansaye} and references within. The idea to combine a finite-time convergence as $N\rightarrow +\infty$ with a long-time convergence of the limit process to obtain a uniform in time convergence with $N$ traces back at least to \cite{Norman}. Remark that, combining uniform in time propagation of chaos estimates with long-time convergence of the limit process, it is possible to obtain results in the spirit of Theorem~\ref{Theorem-Main}, i.e. that gives an error bound between the empirical measure of the chain simulated in practice with the target QSD, even if no long-time convergence of the particle system is available.

Contrary to the present paper,  most of  these previous works do not consider a perturbative framework where a condition similar to the positivity of $\kappa$ given by \eqref{Eq-ConditionPerturb} would be considered. Such a condition is considered in \cite{CloezThai}, which together with the present paper is the only one that establishes a long-time convergence rate uniform in $N$ for the particle system. Remark that   geometric ergodicity for the particle system, stated for instance in \cite{Villemonais2}, is usually easy to obtain (for a fixed $N$) from classical tools on Markov chains. Getting a rate that is uniform in time is a lot harder (hence the perturbative framework). Besides, for another class of mean-field particle systems, the McKean-Vlasov diffusions (for which interaction is induced by an interaction potential force in the drift of the diffusion), it is well-known that there are cases  (in non-convex confining potential with convex interaction for instance, at low temperature, for instance)  where the non-linear limit system has several equilibria and the convergence rate of the particle system (which has a unique invariant measure for all $N$) goes to $+\infty$ with $N$. In fact we don't expect this to happen for the Fleming-Viot particle system since, as mentioned above, the uniqueness of the QSD, the long-time convergence of the limit process and the uniform in time propagation of chaos have been established in non-perturbative cases.  \nvdd{This may indicate that the uniform in $N$ long-time convergence could hold in much more general cases, far from the perturbative regime around the non-interacting case. In that case, our perturbative condition (and the one of \cite{CloezThai}) would just come from the particular non-optimal proof. 
However, the long-time convergence of the limit-process does not imply directly the uniform convergence for the interacting system, so this is still an open question, and our results and those of \cite{CloezThai} are the only of their kind. At least, we can say that we have no reason to think that our condition $\kappa>0$, with the explicit expression of $\kappa$, is sharp in any way.} 

The differences of our work with \cite{CloezThai} are the following. The latter is concerned with a continuous-time Markov chain on a finite state space rather than a diffusion on the torus. Moreover, it requires a strong Doeblin condition: the parameter $\lambda$ in \cite{CloezThai}  is required to be positive, which implies that, for any pair of states $i,i'$,  there is a probability to go either from $i$ to $i'$, or from $i'$ to $i$, or from both $i$ and $i'$ to some third state $j$. 
 Related to this, in \cite{ferrari2007}, although uniform in $N$ long-time convergence is not stated or proven, a coupling argument similar to ours or to \cite{CloezThai} is used (in \cite[Proposition 3.1]{ferrari2007}) to obtain uniform in time propagation of chaos estimates. This work is concerned with countable state space under an even more restrictive Doeblin condition than \cite{CloezThai} (there exists at least one state $i$ for which the transition rate from $j$ to $i$ is uniformly bounded for all other $j$), and the uniform in time result requires a perturbative condition ($\alpha>C$ in \cite{ferrari2007} corresponding to $\lambda>0$ in \cite{CloezThai} and $\kappa>0$ for us). We remark that, in a countable discrete state space, our arguments can be easily adapted to obtain a uniform long-time convergence under a condition of positive Wasserstein curvature for some distance (similarly to Proposition~\ref{Prop-ContractMajka}), which is much less restrictive than the Doeblin conditions of \cite{ferrari2007,CloezThai}.

As far as the time discretization error is concerned, we are not aware of  results similar to ours in previous works but we refer the reader to \cite{FerreStoltz} for weak error studies \`a la Talay-Tubaro for some non-linear evolutions, and references within for more details.}

\nvd{Finally, let us mention another related set of works, \cite{BenaimCloezPanloup,BCV,benaim2015}, based on self-interacting processes. Indeed, the reason we introduced a system of $N$ interacting particles was to approximate some non-linearity in the evolution of a measure-valued process. Yet, actually, when it comes to the approximation of the QSD, we are not really interested in the non-linear evolution, but only in its long-time limit. A classical idea in the field of stochastic algorithms in that case is to construct a chain similar to $(Y_k)_{k\in\N}$ except that, in  the rebirth mechanism, the unknown law $\eta_n$ is replaced by the occupation measure of the past trajectory (which, by ergodicity, is expected to converge to equilibrium), i.e. there is only one particle and, when it dies at time $t>0$, it is resurrected at its position at time $s$ uniformly distributed over $[0,t]$. Although the algorithms are quite similar (and can be combined), their theoretical studies rely on quite distinct arguments.
}

%
%

\section{Proofs}\label{Sec-proofs}

Let us first establish the preliminary result stated in the introduction:

\begin{proof}[Proof of Proposition \ref{Prop-EgaliteLoi}]
For $n\in\N$, denote
\[\eta_n = \mathcal Law( Y_n)\,,\qquad \nu_n = \mathcal Law\po \tilde Z_n \ | \ \tilde T \geqslant n\gamma\pf\,.\]
Since $\nu_0=\eta_0$, suppose by induction that $\nu_n = \eta_n$  for some $n\in\mathbb N$.  Keeping the notations introduced of the definition of the kernel $Q_{\mu}$, consider the events $B_k = \{U_k \geqslant p(X_k)\}$. Then, for all bounded measurable $f$,
\begin{eqnarray*}
Q_\mu f(x) & = & \mathbb E\po f(X)\pf   \\
& = & \mathbb E\po f(X) \sum_{k\in\N} \1_{B_k \cap (\bigcap_{j=0}^{k-1} B_j^c)}\pf \\
& = & \mathbb E \po f(X_0)\1_{U_0 \geqslant p(X_0)}\pf + \sum_{k\geqslant 1} \mathbb E \po f(X_k)\1_{U_k \geqslant p(X_k)}\pf \prod_{j=0}^{k-1} \mathbb P \po B_j^c\pf \\
&= &  K\co f(1-p)\cf(x) + \sum_{k\geqslant 1}  \mu K\co f(1-p)\cf  \po \mu Kp\pf^{k-1} Kp(x) \,.
\end{eqnarray*}
In particular, integrating with respect to $\mu$, we obtain
\begin{eqnarray*}
\mu Q_\mu f & = & \mu K\co f(1-p)\cf \sum_{k\in\N} \po \mu Kp\pf^k\ = \ \frac{\mu K\co f(1-p)\cf }{\mu K\co 1-p\cf }\,.
\end{eqnarray*}
Applied with $\mu = \eta_n$, this reads
\begin{eqnarray*}
\eta_{n+1} f \ = \ \mathbb E\po f(Y_{n+1})\pf & = & \mathbb E\po \mathbb E\po f(Y_{n+1})\ | \ Y_n\pf\pf\ = \  \eta_n Q_{\eta_n} f\ =\  \frac{\eta_n K\co f(1-p)\cf }{\eta_n K\co 1-p\cf }\,.
\end{eqnarray*}
On the other hand, 
\begin{eqnarray*}
\mathbb E\po f(\tilde Z_{n+1})\1_{\tilde T>(n+1)\gamma}\pf & = & \mathbb E\po f\po \tilde Z_{n+1}\pf \1_{\tilde T>n\gamma} \1_{U_n \geqslant p(\tilde Z_{n+1})}\pf \\
& = & \mathbb E\po f\po \tilde Z_{n+1}\pf \po 1 - p\po \tilde Z_{n+1}\pf\pf \1_{\tilde T>n\gamma} \pf \\ 
& = & \mathbb P \po \tilde T>n\gamma\pf \nu_n K\co f(1-p)\cf\,,
\end{eqnarray*}
frow which
\[\nu_{n+1}f \ = \ \frac{\mathbb E\po f(\tilde Z_{n+1})\1_{\tilde T>(n+1)\gamma}\pf}{\mathbb P\po \tilde T>(n+1)\gamma \pf} \ = \ \frac{\mathbb P \po \tilde T>n\gamma\pf \nu_n K\co f(1-p)\cf}{\mathbb P \po \tilde T>n\gamma\pf \nu_n K\co 1-p\cf}  \ = \ \frac{ \nu_n K\co f(1-p)\cf}{  \nu_n K\co 1-p\cf}\,,\]
which concludes.
\end{proof}

\subsection{The basic coupling}\label{Subsec:basiccoupling}

The long-time estimates needed to prove convergence toward equilibrium and uniform in time estimates in $N$ and $\gamma$ are based on the fact that, as long as particles don't die, they follow the chain \eqref{Eq-EulerSchemeEDS} which, like its continuous-time counterpart \eqref{Eq-EDS}, have some mixing properties. In order to quantify the latters, we start by stating \cite[Corollary 2.2]{Majka} in a suitable way in our context.

\begin{prop}\label{Prop-ContractMajka}
There exists $c_1,a,\gamma_0>0$ (that all depend only on the drift $b$ of \eqref{Eq-EDS} and on the dimension $d$) such that, denoting $\rho(x,y) = (1-\exp(-a|x-y|))/a$ for $x,y\in\mathbb T^d$, then $\rho$ is a metric on $\mathbb T^d$ with
\[\forall \gamma \in (0,\gamma_0]\,, \ \forall \mu,\nu\in \mathcal P(\mathbb T^d)\,,\qquad \mathcal W_\rho\po \mu K,\nu K\pf \ \leqslant \ (1-c_1\gamma)\mathcal W_\rho(\mu,\nu)\,.\]
\end{prop}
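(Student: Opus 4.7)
The plan is to deduce the proposition directly from \cite[Corollary 2.2]{Majka}, which is tailored for precisely this kind of statement: Wasserstein contraction of the Euler--Maruyama scheme of an SDE with unit diffusion coefficient and Lipschitz drift, in a distance built from a specific concave function of $|x-y|$. The core argument in \cite{Majka} is a discrete-time reflection coupling \`a la Eberle, which produces contraction in exactly the metric induced by $g(r)=(1-e^{-ar})/a$, with $a$ calibrated to the Lipschitz constant of the drift.

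First, I would lift the problem from $\mathbb{T}^d$ to $\mathbb{R}^d$: the drift $b\in\mathcal{C}^1(\mathbb{T}^d)$ extends to a bounded, Lipschitz, $[0,1]^d$-periodic vector field on $\mathbb{R}^d$. Since $\mathbb{T}^d$ is compact, the global Lipschitz constant and the $L^\infty$-norm of $b$ are both finite, so the assumptions of Majka's corollary are automatically satisfied (any coercivity-at-infinity requirement being vacuous in this bounded setting). Applying \cite[Corollary 2.2]{Majka} then produces constants $a,c_1,\gamma_0>0$ depending only on $b$ and $d$ such that the Euclidean Euler kernel $\tilde K$ on $\mathbb{R}^d$ contracts the Wasserstein distance built from $\tilde\rho(\tilde x,\tilde y)=g(|\tilde x-\tilde y|)$ at rate $(1-c_1\gamma)$ for every $\gamma\in(0,\gamma_0]$.

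Second, I would check that $\rho(x,y)=g(|x-y|_{\mathbb{T}^d})$, with $|\cdot|_{\mathbb{T}^d}$ the torus distance, is a genuine metric on $\mathbb{T}^d$. This reduces to the subadditivity $g(r+s)\leqslant g(r)+g(s)$, which follows from the concavity of $g$ and $g(0)=0$; combined with the triangle inequality for the torus distance, it yields the triangle inequality for $\rho$. Equivalence with the Euclidean (i.e.\ torus) distance is clear since $g$ is smooth at $0$ with $g'(0)=1$ and $g$ is bounded.

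Finally, to transfer the Euclidean contraction to the torus, I would argue by lifts. Given $\mu,\nu\in\mathcal{P}(\mathbb{T}^d)$, choose an optimal $\mathcal{W}_\rho$-coupling $(X,Y)$ and, conditionally on $(X,Y)$, pick Euclidean lifts $(\tilde X,\tilde Y)\in\mathbb{R}^{2d}$ realising the torus distance, i.e.\ $|\tilde X-\tilde Y|=|X-Y|_{\mathbb{T}^d}$. Running the reflection coupling of $\tilde K$ from $(\tilde X,\tilde Y)$ yields one-step iterates $(\tilde X',\tilde Y')$ with $\mathbb{E}[\tilde\rho(\tilde X',\tilde Y')\mid \tilde X,\tilde Y]\leqslant(1-c_1\gamma)\tilde\rho(\tilde X,\tilde Y)$ by Majka's theorem. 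Projecting back to $\mathbb{T}^d$ preserves the marginal laws $\delta_x K$ and $\delta_y K$ (since $b$ is periodic and the Gaussian noise is translation-invariant), and $|X'-Y'|_{\mathbb{T}^d}\leqslant|\tilde X'-\tilde Y'|$ together with the monotonicity of $g$ gives $\rho(X',Y')\leqslant\tilde\rho(\tilde X',\tilde Y')$. Taking expectations and averaging over the initial optimal coupling yields the announced contraction. The only genuinely delicate step is this last one: ensuring that the reflection coupling constructed on $\mathbb{R}^d$ descends to a valid coupling of $\mu K$ and $\nu K$ on $\mathbb{T}^d$, which is where periodicity of $b$ and translation invariance of the Gaussian kernel are used; I expect this bookkeeping to be the main obstacle, although it is in fact routine once the lifts are chosen at the starting time only.
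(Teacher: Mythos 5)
Your overall strategy is the same as the paper's (quote \cite[Corollary 2.2]{Majka} and check that it applies to the Euler kernel $K$), but the detour through $\mathbb{R}^d$ introduces a genuine gap. You claim that, after extending $b$ to a bounded periodic Lipschitz field on $\mathbb{R}^d$, the hypotheses of Majka's corollary are ``automatically satisfied'', the coercivity-at-infinity requirement being ``vacuous in this bounded setting''. This is not correct: once you lift to $\mathbb{R}^d$ the setting is no longer bounded, and a bounded periodic drift does \emph{not} satisfy any dissipativity-at-infinity condition (even $b\equiv 0$ fails $\langle b(x)-b(y),x-y\rangle\leqslant -K|x-y|^2$ for large $|x-y|$). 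Worse, the conclusion you want on $\mathbb{R}^d$ is simply false for such drifts: for the Gaussian Euler kernel with bounded drift and two starting points at Euclidean distance $L$, under \emph{any} coupling the one-step decrease of $\mathbb{E}[\rho]$ is at most of order $e^{-aL}$ (plus Gaussian tail terms), whereas a uniform contraction $(1-c_1\gamma)$ would require a decrease of order $\gamma\,\rho(L)\approx \gamma/a$, so no uniform rate holds on all of $\mathbb{R}^d$. Hence the corollary cannot be invoked on $\mathbb{R}^d$ as a black box the way you do; its hypotheses are exactly what rules out this situation.

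The repair is either to do what the paper does --- apply \cite[Corollary 2.2]{Majka} directly on the compact state space $\mathbb{T}^d$, where the bounded diameter $\sqrt d/2$ removes any ``far apart'' regime and the contraction holds for the metric $\rho$ with constants depending only on $b$ and $d$ --- or, if you insist on lifting, to use only a \emph{local} one-step reflection-coupling estimate for initial Euclidean distances $r\leqslant\sqrt d/2$ (which is legitimate in your scheme, since you choose lifts realising the torus distance, and the projection is $1$-Lipschitz and preserves the marginals $\delta_xK$, $\delta_yK$ by periodicity of $b$); but such a restricted statement is not what you get from the corollary under the hypotheses you asserted, and proving it amounts to redoing the torus case. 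The remaining ingredients of your proposal (subadditivity of $g$ giving that $\rho$ is a metric, equivalence with the Euclidean distance, monotonicity of $g$ under projection) are fine.
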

\nvdd{
\begin{proof}
This is \cite[Corollary 2.2]{Majka} applied to a diffusion with smooth drift on the torus, in which case the distance on \nvd{$\mathbb T^d$} for which the contraction holds is $\rho$.  
\end{proof}}

In the rest of the paper, $\rho$ is the metric and $c_1,a,\gamma_0$ are the constants given by Proposition \ref{Prop-ContractMajka}. Remark that $\rho$ is equivalent to the Euclidian metric, with
\[\beta |x-y| \ \leqslant \ \rho(x,y) \ \leqslant \ |x-y|\qquad \text{for}\qquad \beta \ = \ 2(1-e^{-a\sqrt {d}/2})/(a\sqrt {d})\,,\]
where we used that the diameter of $\T^d$ is $\sqrt {d}/2$ and that $r\mapsto (1-\exp(-ar))/a$ is a concave function with derivative 1 at zero.  In particular, $\mathcal{W}_1$ and $\mathcal W_\rho$ are equivalent.


\bigskip

Now, in this particle system, the contraction property of the chain \eqref{Eq-EulerSchemeEDS} may be counterbalanced by the death/resurrection mechanism through which particles interact. Indeed, considering two systems of $N$ interacting particles, for $i\in\cco1,N\ccf$ the previous result means that we can couple the $i^{th}$ particles of both systems to get closer one to the other (on average), as long as they don't die. But then, one of the two particle can die and resurrect far from the other, or even if they die simultaneously they may resurrect far apart one from the other.  That being said, first, the closer they get, the easier it is to couple them in order to die simultaneously, and second, when they die simultaneously, keeping the particles close one to the other amount to do a suitable coupling of the laws from which the particles are resurrected. This is quantified in the following proposition.

In all the rest of the paper, we suppose that $\lambda$ is $L_\lambda$-Lipschitz (but not necessarily that $\kappa$ given by \eqref{Eq-ConditionPerturb} is positive).

\begin{prop}\label{Prop-CouplBasic}
Let \nvd{$\mu_0,\mu_1,\mu_0',\mu_1' \in\mathcal P(\T^d)$ and let $(X_0,X_0')$ (resp. $(X_1,X_1')$) be a coupling of  $\mu_0K$ and $\mu_0' K$ (resp. $\mu_1 K$ and $\mu_1' K$)}. Then 
\begin{eqnarray*}
\mathcal W_{\rho} \po \nvd{\mu_0 Q_{\mu_1},\mu_0' Q_{\mu_1'}}\pf & \leqslant & h \po \mathbb E \po \rho(X_0,X_0')\pf + \frac{q_0}{1-q_1}  \mathbb E \po\rho(X_1,X_1')\pf\pf
\end{eqnarray*}
where
\begin{eqnarray*}
h &=&   1 - \min p  + (a\beta)^{-1} \gamma L_\lambda
\end{eqnarray*}
and, considering $U\sim \mathcal U([0,1])$ independent from $(X_0,X_0',X_1,X_1')$,
\begin{eqnarray*}
q_i & = & \mathbb P \po U < p(X_i)\wedge p(X_i')\pf\,,\qquad i=0,1.
\end{eqnarray*}

\end{prop}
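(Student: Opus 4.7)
The plan is to build an explicit coupling between $\mu_0 Q_{\mu_1}$ and $\mu_0' Q_{\mu_1'}$ by running the two rejection procedures in the definition of $Q$ in parallel, with common uniform variables. Concretely, take $\xi_0 = X_0$, $\xi_0' = X_0'$ for the initial step and, for $k\geq 1$, let $(\xi_k,\xi_k')$ be i.i.d.\ copies of the given coupling $(X_1,X_1')$; take also an independent i.i.d.\ sequence $(U_k)_{k\geq 0}$ of $\mathcal U([0,1])$ variables. Set $H=\inf\{k\in\N:U_k\geq p(\xi_k)\}$ and $H'=\inf\{k\in\N:U_k\geq p(\xi_k')\}$. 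By construction $\xi_H\sim \mu_0Q_{\mu_1}$ and $\xi_{H'}'\sim \mu_0'Q_{\mu_1'}$, so $(\xi_H,\xi_{H'}')$ is a valid coupling and it suffices to bound $\mathbb E\po \rho(\xi_H,\xi_{H'}')\pf$ from above.

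The estimate then splits according to whether $H=H'$ or not. On the synchronized event $\{H=H'=k\}$ the two particles sit at the outputs of the same step, and the conditional survival factor $1-p(\xi_k)\vee p(\xi_k')$ is bounded by $1-\min p$. By independence of the pairs $(\xi_j,\xi_j')$ for distinct $j$, the probability of common death at step $0$ is $q_0$ and at each subsequent step is $q_1$; summing the resulting geometric series in $k\geq 0$ yields a contribution of at most
\[
(1-\min p)\po \mathbb E\po\rho(X_0,X_0')\pf + \tfrac{q_0}{1-q_1}\mathbb E\po\rho(X_1,X_1')\pf\pf .
\]
On the desynchronized event $\{H\neq H'\}$, I would simply use the crude diameter bound $\rho\leq 1/a$ (which follows from $\rho(x,y)=(1-e^{-a|x-y|})/a$). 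The conditional probability of desynchronization at step $k$, given common deaths at all earlier steps, equals $\mathbb E\po |p(\xi_k)-p(\xi_k')|\pf$, and the Lipschitz estimate $|p(x)-p(y)|\leq \gamma L_\lambda|x-y|\leq (\gamma L_\lambda/\beta)\rho(x,y)$ then controls it by the $\rho$-distance. Summing with the same weights $q_0 q_1^{k-1}$ and multiplying by $1/a$ produces a contribution
\[
\tfrac{\gamma L_\lambda}{a\beta}\po \mathbb E\po\rho(X_0,X_0')\pf + \tfrac{q_0}{1-q_1}\mathbb E\po\rho(X_1,X_1')\pf\pf ,
\]
and adding the two pieces yields the announced bound with $h=1-\min p+(a\beta)^{-1}\gamma L_\lambda$.

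The main obstacle is the desynchronized case: once the two chains disagree on whether to die, they become essentially independent and cannot be recoupled, forcing the use of the trivial diameter estimate. This is acceptable only because the probability of desynchronization is itself of order $\gamma L_\lambda$, so the product matches precisely the correction term $(a\beta)^{-1}\gamma L_\lambda$ in $h$. The remainder of the argument is essentially bookkeeping the geometric weights $q_0 q_1^{k-1}$, which produces the characteristic factor $q_0/(1-q_1)$ in front of $\mathbb E\po\rho(X_1,X_1')\pf$ and explains the particular asymmetric form of the statement between the step-$0$ and step-$k\geq 1$ contributions.
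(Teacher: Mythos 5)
Your proposal is correct and follows essentially the same route as the paper's proof: the same parallel rejection coupling with shared uniforms and i.i.d.\ copies of $(X_1,X_1')$ for the rebirth steps, the same use of $\xi_H\sim\mu_0Q_{\mu_1}$, and the same estimates ($1-\min p$ on the synchronized survival event, the diameter bound $\rho\leqslant 1/a$ combined with $|p(x)-p(y)|\leqslant\gamma L_\lambda|x-y|\leqslant(\gamma L_\lambda/\beta)\rho(x,y)$ on desynchronization, and the geometric weights $q_0q_1^{k-1}$). The only cosmetic difference is that you group the paper's four cases into two (synchronized versus desynchronized), which is the same decomposition.
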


\begin{proof}
Let $(X_k,X_k',U_k)_{k\in\N}$ be a sequence of independent triplet of random variables such that, for all $k\in\N$, $U_k\sim \mathcal U([0,1])$ is independent from $(X_k,X_k')$, which are such as defined in the proposition for $k=0$ and $1$ and, for $j>1$, have the same distribution as $(X_1,X_1')$. Set $H=\inf\{n\in\N,\ U_n < p(X_n)\}$ and $H'=\inf\{n\in\N,\ U_n < p(X_n')\}$. Then, by considering the law of $(X_k,U_k)_{k\in\N}$ alone, it  is clear that $X_{H} \sim \mu_0 Q_{\mu_1}$ and, similarly, $X_{H'}' \sim \mu_0' Q_{\mu_1'}$, so that
\[\mathcal W_{\rho}\po \mu_0 Q_{\mu_1},\mu_0' Q_{\mu_1'}\pf \ \leqslant \ \mathbb E\po \rho\po X_{H},X'_{H'}\pf\pf\,.\]
Different cases are distinguished depending on the value of $H$ and $H'$. In the simplest case, none of the particles dies:
\begin{eqnarray*}
\mathbb E\po \rho\po X_{H},X_{H'}'\pf\1_{H=H'=0}\pf & = &  \mathbb E\po \rho\po X_{0},X'_{0}\pf\1_{U_0 \geqslant p(X_0)\vee p(X'_0)}\pf\\
& \leqslant & \mathbb E\po \rho\po X_{0},X'_{0}\pf\1_{U_0 \geqslant \min p}\pf\\
& \leqslant & \po 1 - \min p\pf \mathbb E\po \rho\po X_{0},X'_{0}\pf\pf\,,
\end{eqnarray*}
where we used 
 the independence between $U_0$ and $(X_0,X'_0)$. In the second case, only one particle dies: using that $\|\rho\|_\infty \leqslant 1/a$,
\begin{eqnarray*}
\mathbb E\po \rho\po X_{H},X'_{H'}\pf\1_{H\wedge H'=0<H\vee H'}\pf & \leqslant & a^{-1} \mathbb P \po U_0 \in [p(X_0)\wedge p(X'_0),p(X_0)\vee p(X'_0)]\pf\\
& = &  a^{-1}\mathbb E\po |p(X_0)-p(X'_0)|\pf\\
& \leqslant &  a^{-1}\gamma L_\lambda \mathbb E\po |X_0-X'_0|\pf\\
& \leqslant &  (a\beta)^{-1} \gamma L_\lambda\mathbb E\po \rho(X_0,X'_0)\pf\,.
\end{eqnarray*}
In the third case, both particles die $k\geqslant 1$ times: 
\begin{eqnarray*}
\mathbb E\po \rho\po X_{H},X'_{H'}\pf\1_{H=H'=k}\pf & = &  \mathbb E\po \rho\po X_{k},X'_{k}\pf\1_{U_k \geqslant p(X_k)\vee p(X'_k)}\prod_{j=0}^{k-1}\1_{U_j < p(X_j)\wedge p(X'_j)}\pf\\
& \leqslant & q_0 q_1^{k-1} \mathbb E\po \rho\po X_{k},X'_{k}\pf\1_{U_k \geqslant \min p}\pf\\
& \leqslant & q_0 q_1^{k-1} \po 1 - \min p\pf \mathbb E\po \rho\po X_{1},X'_{1}\pf\pf\,. 
\end{eqnarray*}
Finally, combining the computations of the last two cases, the fourth one reads, for $k\geqslant 1$,
\begin{eqnarray*}
\mathbb E\po \rho\po X_{H},X'_{H'}\pf\1_{H\wedge H'=k<H\vee H'}\pf & \leqslant & a^{-1} q_0 q_1^{k-1}  \mathbb P \po U_k \in [p(X_k)\wedge p(X'_k),p(X_k)\vee p(X'_k)]\pf\\
& \leqslant & (a\beta)^{-1} q_0 q_1^{k-1}   \gamma L_\lambda \mathbb E\po \rho\po X_{1},X'_{1}\pf\pf\,.  
\end{eqnarray*}
Summing these four cases concludes.
\end{proof}

\subsection{Long-time convergence}\label{SubSection-Tempslong}

 For $N\in\N_*$ denote $\rho_N$ the metric on $\mathbb T^{dN}$ given by
\[\rho_N(x,y) \ = \ \sum_{i=1}^N \rho(x_i,y_i)\,.\]

\nvdd{The following result is similar to the results of \cite{Monmarche,Villemonais3,CloezThai} and based on the same coupling argument.}

\begin{prop}\label{Prop-TempsLong}
There exists $c_2>0$ (that depends only on the drift $b$ of \eqref{Eq-EDS} and on the dimension $d$) such that for all $\gamma\in (0,\gamma_0]$ 	$N\in\N$, and all $\mu,\nu\in\mathcal P(\T^{dN})$,
\[ \mathcal W_{\rho_N}\po \mu R_{N,\gamma},\nu R_{N,\gamma}\pf \ \leqslant \ \po 1-\gamma \kappa \pf\mathcal W_{\rho_N}(\mu,\nu)\,.\]
with $\kappa$ given by \eqref{Eq-ConditionPerturb}.
\end{prop}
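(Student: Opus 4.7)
The plan is to follow the standard coupling strategy: reduce the bound to Dirac initial conditions, exploit the product structure of $R_{N,\gamma}$, apply the one-particle bound of Proposition~\ref{Prop-CouplBasic} with carefully chosen couplings, and finally do the algebraic manipulation that turns the resulting constants into the form $1-\gamma\kappa$ with $\kappa$ as in \eqref{Eq-ConditionPerturb}.

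First I would argue, by taking an optimal $\mathcal W_{\rho_N}$-coupling $(\X,\Y)$ of $(\mu,\nu)$ and conditioning, that it suffices to prove the pointwise bound $\mathcal W_{\rho_N}\po R(x,\cdot),R(y,\cdot)\pf \leqslant (1-\gamma\kappa)\rho_N(x,y)$ for all $x,y\in\T^{dN}$. Since $R(x,\cdot)=\bigotimes_{i=1}^N Q_{\pi(x)}(x_i,\cdot)$, coupling the $N$ coordinates independently gives the subadditive bound $\mathcal W_{\rho_N}\po R(x,\cdot),R(y,\cdot)\pf \leqslant \sum_{i=1}^N \mathcal W_\rho\po Q_{\pi(x)}(x_i,\cdot),Q_{\pi(y)}(y_i,\cdot)\pf$, so the problem reduces to bounding each term via Proposition~\ref{Prop-CouplBasic}.

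For the $i$-th term, I would apply Proposition~\ref{Prop-CouplBasic} with $\mu_0=\delta_{x_i}$, $\mu_0'=\delta_{y_i}$, $\mu_1=\pi(x)$, $\mu_1'=\pi(y)$ and with the following explicit choices of the input couplings. Take $(X_0,X_0')$ to be the optimal $\mathcal W_\rho$-coupling of $K(x_i,\cdot)$ and $K(y_i,\cdot)$, so that Proposition~\ref{Prop-ContractMajka} yields $\mathbb E[\rho(X_0,X_0')]\leqslant (1-c_1\gamma)\rho(x_i,y_i)$. Take $(X_1,X_1')$ by first drawing $J\sim\mathcal U(\cco 1,N\ccf)$ (the same $J$ for both components) and then applying the optimal $\mathcal W_\rho$-coupling of $K(x_J,\cdot)$ and $K(y_J,\cdot)$; by another application of Proposition~\ref{Prop-ContractMajka}, this gives $\mathbb E[\rho(X_1,X_1')]\leqslant (1-c_1\gamma)\rho_N(x,y)/N$. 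Using the uniform bounds $q_0^{(i)}\leqslant\max p$ and $1-q_1\geqslant 1-\max p=e^{-\gamma\|\lambda\|_\infty}$ (which hold for any couplings, by $p\leqslant 1-e^{-\gamma\|\lambda\|_\infty}$), averaging over $i$ in the second-term contribution produces the clean factor $1+\max p/(1-\max p)=e^{\gamma\|\lambda\|_\infty}$. Summing then gives
\[\mathcal W_{\rho_N}\po R(x,\cdot),R(y,\cdot)\pf \ \leqslant \ h(1-c_1\gamma)e^{\gamma\|\lambda\|_\infty}\rho_N(x,y)\,,\]
with $h=e^{-\gamma\min\lambda}+(a\beta)^{-1}\gamma L_\lambda$.

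The main (and only delicate) obstacle is now to convert the constant $h(1-c_1\gamma)e^{\gamma\|\lambda\|_\infty}$ into the claimed form $1-\gamma\kappa$. The key observation is that the troublesome $e^{-\gamma\min\lambda}\cdot e^{\gamma\|\lambda\|_\infty}=e^{\gamma(\|\lambda\|_\infty-\min\lambda)}$ can be controlled using the Lipschitz property: since $\|\lambda\|_\infty-\min\lambda \leqslant L_\lambda\,\mathrm{diam}(\T^d)= L_\lambda\sqrt d/2$ and $e^s-1\leqslant s e^s$, one has $e^{\gamma(\|\lambda\|_\infty-\min\lambda)}\leqslant 1+\gamma L_\lambda\frac{\sqrt d}{2} e^{\gamma\|\lambda\|_\infty}$. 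Combining with the $(a\beta)^{-1}\gamma L_\lambda e^{\gamma\|\lambda\|_\infty}$ term, dropping the negative $O(\gamma^2)$ contribution coming from $(1-c_1\gamma)$ multiplied by the first order terms, yields
\[ h(1-c_1\gamma)e^{\gamma\|\lambda\|_\infty} \ \leqslant \ 1-\gamma\bigl(c_1-c_2 L_\lambda e^{\gamma\|\lambda\|_\infty}\bigr) \]
with $c_2=\sqrt d/2+(a\beta)^{-1}$, which depends only on $b$ and $d$ as required, concluding the proof.
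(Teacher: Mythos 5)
Your proposal is correct and follows essentially the same route as the paper's proof: the same reduction to Dirac masses, the same coordinate-wise coupling, the same inputs to Proposition~\ref{Prop-CouplBasic} (optimal couplings of $K(x_i,\cdot)$ and $K(y_i,\cdot)$, plus the shared uniform index $J$ for the rebirth coupling), and Proposition~\ref{Prop-ContractMajka} applied twice. The only differences are cosmetic: you bound $q_0/(1-q_1)$ uniformly by $\max p/(1-\max p)=e^{\gamma\|\lambda\|_\infty}-1+1$ instead of keeping the averaged $q_*$ as the paper does, and you apply the Lipschitz estimate to $\lambda$ (via $e^{\gamma(\|\lambda\|_\infty-\min\lambda)}$) rather than to $p$, yielding the admissible constant $c_2=\sqrt d/2+(a\beta)^{-1}$ in place of the paper's $\sqrt d+(a\beta)^{-1}$.
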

\begin{proof}
It is in fact sufficient to prove this for $\mu = \delta_x$ and $\nu = \delta_y$ for any $x,y\in\mathbb T^{dN}$. Indeed, assuming the result proven for Dirac masses, in the general case, considering $(\X_0,\Y_0)$ an optimal coupling of $\mu$ and $\nu$ and $(\X_1,\Y_1)$ an optimal coupling of $R(\X_0,\cdot)$ and $R(\Y_0,\cdot)$, then $\X_1 \sim \mu R$ and  $\Y_1\sim\nu R$, so that
\begin{eqnarray*}
W_{\rho_N}\po \mu R,\nu R\pf & \leqslant & \mathbb E \po \rho_N\po \X_1,\Y_1\pf\pf\\
& = & \mathbb E \po \mathbb E \po \rho_N\po \X_1,\Y_1\pf\ |\ (\X_0,\Y_0)\pf\pf\\
& = & \mathbb E \po  \mathbb E \po  \mathcal W_{\rho_N}(\delta_{\X_0} R,\delta_{\Y_0}R)\ |\ (\X_0,\Y_0)\pf\pf\\
& \leqslant & \po 1-\gamma\kappa\pf\mathbb E \po \rho_N\po \X_0,\Y_0\pf\pf \\
& = & \po 1-\gamma\kappa \pf \mathcal  W_{\rho_N}\po \mu ,\nu \pf\,.
\end{eqnarray*}
Hence, in the following, we fix $x,y\in\mathbb T^{dN}$. Let $(X_i,Y_i)_{i\in\cco 1,N\ccf}$ be independent pairs of random variables in $\T^d$ where, for all $i\in\cco 1,N\ccf$, $(X_i,Y_i)$ is an optimal coupling of $Q_{\pi(x)}(x_i,\cdot)$ and $Q_{\pi(y)}(y_i,\cdot)$. Then $(\X,\Y)$ is a coupling of $R(x,\cdot)$ and $R(y,\cdot)$, so that
\begin{eqnarray*}
\mathcal W_{\rho_N}\po \delta_x R,\delta_y R\pf \ \leqslant \ \mathbb E\po \rho_N\po \X,\Y\pf\pf &=& \sum_{i=1}^N \mathbb E\po \rho\po X_i,Y_i\pf\pf \\
& = & \sum_{i=1}^N \mathcal W_{\rho}\po Q_{\pi(x)}(x_i,\cdot),Q_{\pi(y)}(y_i,\cdot)\pf\,. 
\end{eqnarray*}
We want to apply Proposition \ref{Prop-CouplBasic} with $\mu_1 = \pi(x)$, $\mu_0 = \delta_{x_i}$, $\mu_1' = \pi(y)$ and $\mu_0'=\delta_{y_i}$. To do so, for all $i\in\cco 1,N\ccf$, we consider $(\tilde X_i,\tilde Y_i)$ an optimal coupling of $K(x_i,\cdot)$ and $K(y_i,\cdot)$. From Proposition \ref{Prop-ContractMajka}, 
\begin{eqnarray}\label{Eq-Tempslong-interm2}
\mathbb E\po \rho\po \tilde X_i,\tilde Y_i\pf\pf & \leqslant & \po 1 - c_1\gamma\pf \rho(x_i,y_i)\,.
\end{eqnarray}
Moreover, if $J\sim\mathcal U(\cco 1,N\ccf)$ is independent from the $(\tilde X_i,\tilde Y_i)$'s, we remark that $(\tilde X_J,\tilde Y_J)$ is a coupling of $\pi(x)K$ and $\pi(y)K$.  Proposition \ref{Prop-CouplBasic} applied with these couplings reads, for all $i\in\cco 1,N\ccf$,
\begin{eqnarray}\label{Eq-Tempslong-interm1}
\mathcal W_{\rho}\po Q_{\pi(x)}(x_i,\cdot),Q_{\pi(y)}(y_i,\cdot)\pf& \leqslant & h \po \mathbb E \po \rho(\tilde X_i,\tilde Y_i)\pf + \frac{q_i}{1-q_*}  \mathbb E \po\rho(\tilde X_J,\tilde Y_J)\pf\pf
\end{eqnarray}
where, if $U\sim\mathcal U([0,1])$ is independent from the previous variables,
\[ q_i \ := \ \mathbb P \po U < p(\tilde X_i)\wedge p(\tilde Y_i)\pf\]
and, conditionning on the value of $J$,
\[q_* \ := \  \mathbb P \po U < p(\tilde X_J)\wedge p(\tilde Y_J)\pf \ = \ \frac1N \sum_{i=1}^N q_i\,.\]
Summing \eqref{Eq-Tempslong-interm1} over $i\in\cco 1,N\ccf$ and applying \eqref{Eq-Tempslong-interm2} yields
\[\mathcal W_{\rho_N}\po \delta_x R,\delta_y R\pf \ \leqslant \   h \po \po 1 - c_1\gamma\pf \sum_{i=1}^N \rho(x_i,y_i) + \frac{N q_*}{1-q_*}  \mathbb E \po\rho(\tilde X_J,\tilde Y_J)\pf\pf\,.\]
Applying Proposition \ref{Prop-ContractMajka} again,
\[\mathbb E \po\rho(\tilde X_J,\tilde Y_J)\pf\ \ = \ \frac1N\sum_{i=1}^N \mathbb E \po\rho(\tilde X_i,\tilde Y_i)\pf \ \leqslant \ \frac1N \po 1 - c_1\gamma\pf \sum_{i=1}^N \rho(x_i,y_i) \,,\]
and the previous inequality becomes
\[\mathcal W_{\rho_N}\po \delta_x R,\delta_y R\pf \ \leqslant \   \frac{h   \po 1 - c_1\gamma\pf}{1-q_*} \rho_N(x,y) \,.\]
Bounding $1-q_* \geqslant 1-\max p \geqslant \exp(-\gamma\|\lambda\|_\infty)$  and $\max p - \min p \leqslant \sqrt {d/2}  \gamma L_\lambda$ yields
\begin{eqnarray*}
\frac{h   \po 1 - c_1\gamma\pf}{1-q_*}   & \leqslant & \po 1 - c_1\gamma\pf \frac{1 - \max p + \max p - \min p  + (a\beta)^{-1} \gamma L_\lambda }{1-\max p} \\
& \leqslant & 1 - c_1\gamma + \gamma L_\lambda e^{\gamma\|\lambda\|_\infty}  \po (a\beta)^{-1}  + \sqrt d\pf\,,
\end{eqnarray*}
which concludes.
\end{proof}
As a direct consequence,  Proposition~\ref{Prop-TempsLong} gives 
\[\forall m\in\N,\qquad \mathcal W_{\rho_N}\po \mu  R^{m},\nu R^{m}\pf \ \leqslant \ e^{-\kappa m \gamma}\mathcal W_{\rho_N}(\mu,\nu)\,,\]
with $\kappa$ that does not depends on $N$ nor  $\gamma$. Provided $\kappa>0$, and since $\mathcal P(\T^{dN})$ is complete for $\mathcal W_1$ (hence for $\mathcal W_{\rho_N}$) the Banach fixed-point theorem implies then that $R$ admits a unique invariant measure toward which it converges at rate $\gamma \kappa$.

In the rest of the paper, $\kappa$ is given by \eqref{Eq-ConditionPerturb} (but is not necessarily assumed positive).

\nvdd{
\begin{proof}[Proof of Theorem~\ref{Theorem-Inter}]
The first part of the theorem has already been proven in Proposition~\ref{Prop-TempsLong}. The last statement then follows from the first part, the
equivalence between the Euclidean distance and $\rho_N$ and the fact $\mathcal W_{\rho_N}(\nu,\mu)\leqslant \|\rho_N\|_{\infty} = N\sqrt{d}/2$ for all $\nu,\mu\in\mathcal P(\T^{dN})$.
\end{proof}

}
\subsection{Propagation of chaos}\label{Subsec:propchaos}

Recall that $\eta_k$ is the law at time $k$ of the non-homogeneous Markov chain $(Y_k)_{k\in\N}$ on $\T^d$ introduced in Section \ref{SubSec-DefProcess} with transition kernels $Q_{\eta_k}$ and initial condition $\eta_0$, and that $R=R_{N,\gamma}$ is the transition kernel of the Markov chain $(\X_k)_{k\in\N}$ on $\T^{dN}$.

\begin{lem}\label{Lem-Prop-chaos}
There exist $C_1>0$ such that for all $N\in\N$ , $\gamma\in(0,\gamma_0]$, $\eta\in\mathcal P(\T^d)$ and $\mu\in\mathcal P(\T^{dN})$,
\[\mathcal W_{\rho_N} \po \mu R,\mu Q_{\eta}^{\otimes N}\pf \ \leqslant \  \gamma  N  C_1 \int_{\T^{dN}} \mathcal W_\rho\po \pi(x),\eta\pf \mu(\dd x)\,.\]
\end{lem}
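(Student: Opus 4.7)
The strategy is to compare $R(x,\cdot)$ and $Q_\eta^{\otimes N}(x,\cdot)$ coordinate by coordinate using Proposition~\ref{Prop-CouplBasic}, and to harvest the prefactor $\gamma$ from the fact that the probability of dying at a single step is of order $\gamma$, which is precisely the only event through which the rebirth measure enters the dynamics.

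First I would reduce to the case of a Dirac initial distribution. By the standard fact that, for any Markov kernels $P_1,P_2$ on a Polish space and any $\mu$, one has $\mathcal W_{\rho_N}(\mu P_1,\mu P_2)\leqslant \int \mathcal W_{\rho_N}(P_1(x,\cdot),P_2(x,\cdot))\,\mu(\dd x)$ (just concatenate an optimal coupling of $P_1(x,\cdot),P_2(x,\cdot)$ for each $x$ with the trivial coupling $(x,x)$), it suffices to prove
\[\mathcal W_{\rho_N}\bigl(R(x,\cdot),Q_\eta^{\otimes N}(x,\cdot)\bigr)\ \leqslant\ \gamma N C_1\,\mathcal W_\rho\bigl(\pi(x),\eta\bigr)\]
for every $x\in\T^{dN}$. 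Using coupled copies of $Q_{\pi(x)}(x_i,\cdot)$ and $Q_\eta(x_i,\cdot)$ \emph{independently} in $i$ produces a coupling of the two product laws, which yields
\[\mathcal W_{\rho_N}\bigl(R(x,\cdot),Q_\eta^{\otimes N}(x,\cdot)\bigr)\ \leqslant\ \sum_{i=1}^N \mathcal W_\rho\bigl(\delta_{x_i} Q_{\pi(x)},\delta_{x_i}Q_\eta\bigr).\]

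Then I would apply Proposition~\ref{Prop-CouplBasic} for each $i$, with $\mu_0=\mu_0'=\delta_{x_i}$ and $\mu_1=\pi(x)$, $\mu_1'=\eta$. For $(X_0,X_0')$ the trivial synchronous coupling of $K(x_i,\cdot)$ with itself gives $\mathbb E[\rho(X_0,X_0')]=0$ and $q_0=\mathbb E[p(X_0)]\leqslant \|p\|_\infty\leqslant 1-e^{-\gamma\|\lambda\|_\infty}\leqslant \gamma\|\lambda\|_\infty$. For $(X_1,X_1')$ I choose an optimal $\mathcal W_\rho$-coupling of $\pi(x)K$ and $\eta K$, which by Proposition~\ref{Prop-ContractMajka} satisfies $\mathbb E[\rho(X_1,X_1')]=\mathcal W_\rho(\pi(x)K,\eta K)\leqslant \mathcal W_\rho(\pi(x),\eta)$. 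Moreover $1-q_1\geqslant 1-\|p\|_\infty\geqslant e^{-\gamma\|\lambda\|_\infty}$, while $h=1-\min p+(a\beta)^{-1}\gamma L_\lambda$ is bounded by a constant depending only on $\|\lambda\|_\infty$, $L_\lambda$ and $\gamma_0$ when $\gamma\in(0,\gamma_0]$. Combining,
\[\mathcal W_\rho\bigl(\delta_{x_i} Q_{\pi(x)},\delta_{x_i}Q_\eta\bigr)\ \leqslant\ h\,\frac{q_0}{1-q_1}\,\mathcal W_\rho(\pi(x),\eta)\ \leqslant\ \gamma C_1\,\mathcal W_\rho(\pi(x),\eta)\]
with $C_1:=\|\lambda\|_\infty e^{\gamma_0\|\lambda\|_\infty}\bigl(1+(a\beta)^{-1}\gamma_0 L_\lambda\bigr)$ (or any similar explicit expression). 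Summing over $i\in\cco 1,N\ccf$ and then integrating against $\mu$ concludes.

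The only delicate point is recognising that, because the two copies of the first Euler step from $x_i$ can be coupled synchronously (same rebirth law would be irrelevant for that step), the discrepancy between $R$ and $Q_\eta^{\otimes N}$ only manifests itself conditionally on a death at step $0$; this event has probability $O(\gamma)$, which is exactly the factor $q_0$ in Proposition~\ref{Prop-CouplBasic}. Once this is seen, the rest is a clean bookkeeping.
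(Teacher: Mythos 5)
Your proposal is correct and follows essentially the same route as the paper: reduce to $\mu=\delta_x$, couple coordinatewise, and apply Proposition~\ref{Prop-CouplBasic} with the synchronous coupling for the first Euler step (so only the rebirth term survives) together with the bounds $q_0\leqslant\gamma\|\lambda\|_\infty$, $1-q_1\geqslant e^{-\gamma_0\|\lambda\|_\infty}$ and an optimal coupling of $\pi(x)K$ and $\eta K$, which even reproduces the paper's constant $C_1$.
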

\begin{proof}
Similarly to the proof of Proposition \ref{Prop-TempsLong}, we start with the case  $\mu=\delta_x$ for some $x\in\T^{dN}$. Let $(X_i,Y_i)_{i\in\cco 1,N\ccf}$ be $N$ independent pairs of random variables such that for all $i\in\cco 1,N\ccf$, $(X_i,Y_i)$ is an optimal coupling of $Q_{\pi(x)}(x_i,\cdot)$ and $Q_{\eta}(x_i,\cdot)$. Then $(\X,\Y)$ is a coupling of $R_N(x,\cdot)$ and $Q_\eta^{\otimes N}(x,\cdot)$, so that
\begin{eqnarray*}
\mathcal W_{\rho_N} \po \delta_x R_{N},\delta_x Q_{\eta}^{\otimes N}\pf & \leqslant & \mathbb  E \po \rho_N\po X,Y\pf\pf\\
& = & \sum_{i=1}^N \mathbb E\po \rho(X_i,Y_i)\pf \ = \ \sum_{i=1}^N \mathcal W_{\rho}\po \delta_{x_i}Q_{\pi(x)},\delta_{x_i} Q_{\eta}\pf \,.
\end{eqnarray*}
From Proposition \ref{Prop-CouplBasic} (bounding $q_0\leqslant \max p \leqslant \gamma \|\lambda\|_\infty $ and $1-q_1 \geqslant 1 - \max p \geqslant \exp(-\gamma_0 \|\lambda\|_\infty)$)
\begin{eqnarray*}
\mathcal W_{\rho}\po \delta_{x_i}Q_{\pi(x)},\delta_{x_i} Q_{\eta}\pf  & \leqslant &    \gamma \|\lambda\|_\infty \po 1 + (a\beta)^{-1} \gamma_0 L_\lambda \pf e^{\gamma_0 \|\lambda\|_\infty}  \mathcal W_\rho\po \pi(x),\eta\pf \\
& := & \gamma C_1 \mathcal W_\rho\po \pi(x),\eta\pf \,.
\end{eqnarray*}
Now in the general case where $\mu$ is not a Dirac mass, considering $Z_0\sim \mu$, and $(Z_1,Z_2)$ an optimal coupling of $R(Z_0,\cdot)$ and  $Q_{\eta}^{\otimes N}(Z_0,\cdot)$ and conditioning with respect to $Z_0$, 
\[\mathcal W_{\rho_N} \po \mu R_{N},\mu Q_{\eta}^{\otimes N}\pf \ \leqslant \ \mathbb E\po \rho_N(Z_1,Z_N) \pf \ \leqslant \ \gamma N  C_1 \mathbb E \po \mathcal W_\rho\po \pi(Z_0),\eta\pf\pf\,. \]
\end{proof}

\begin{prop}\label{Prop-chaos}
 There exist $C_2,C_3>0$ such that for all $N\in\N$, $\gamma\in(0,\gamma_0]$, $m\in \N$ and $\eta_0\in\mathcal P(\T^d)$, first,
\begin{eqnarray}\label{Eq-PropChaos1}
\mathcal W_{\rho_N} \po \eta_0^{\otimes N} R^m,\eta_m^{\otimes N}\pf & \leqslant & C_2 N \alpha(N) \gamma \sum_{s=1}^m(1-\gamma\kappa)^{s-1}\,,
\end{eqnarray}
and second, if $(\X_k)_{k\in\N}$ is a Markov chain with initial distribution $\eta_0^{\otimes N}$ and transition kernel $R$, then
\begin{eqnarray}\label{Eq-PropChaos2}
\mathbb E\po \mathcal W_\rho \po \pi(\X_m),\eta_m\pf \pf & \leqslant & C_3 \alpha(N) \po 1 + \gamma \sum_{s=1}^m(1-\gamma\kappa)^{s-1}\pf\,.
\end{eqnarray}
\end{prop}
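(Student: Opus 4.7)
The plan is to prove the two bounds in succession, the first by a telescoping argument combining Lemma~\ref{Lem-Prop-chaos} with the contraction of $R$ given by Proposition~\ref{Prop-TempsLong}, and the second by a triangle-inequality decomposition of the empirical measure error.

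For \eqref{Eq-PropChaos1}, the key observation is that the product structure is preserved: since $\eta_{s+1}=\eta_s Q_{\eta_s}$, one has $\eta_{s+1}^{\otimes N}=\eta_s^{\otimes N} Q_{\eta_s}^{\otimes N}$. I would therefore introduce the interpolating family $\tilde\mu_s=\eta_s^{\otimes N}R^{m-s}$ for $s=0,\dots,m$, which satisfies $\tilde\mu_0=\eta_0^{\otimes N}R^m$ and $\tilde\mu_m=\eta_m^{\otimes N}$, and telescope
\begin{align*}
\mathcal W_{\rho_N}\bigl(\eta_0^{\otimes N}R^m,\eta_m^{\otimes N}\bigr)\ \leqslant\ \sum_{s=0}^{m-1}\mathcal W_{\rho_N}\bigl(\eta_s^{\otimes N}R^{m-s},\eta_s^{\otimes N}Q_{\eta_s}^{\otimes N}R^{m-s-1}\bigr).
\end{align*}
Each term is estimated by first applying Proposition~\ref{Prop-TempsLong} $m-s-1$ times (contraction of $R$ on $\mathcal W_{\rho_N}$, giving a factor $(1-\gamma\kappa)^{m-s-1}$), then Lemma~\ref{Lem-Prop-chaos} with $\mu=\eta_s^{\otimes N}$ and $\eta=\eta_s$. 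This reduces the problem to bounding $\int \mathcal W_\rho(\pi(x),\eta_s)\,\eta_s^{\otimes N}(\dd x)$, i.e.\ the expected Wasserstein distance between $\eta_s$ and the empirical measure of $N$ i.i.d.\ samples from $\eta_s$. At this point I invoke the classical Fournier--Guillin estimate, which yields a bound of order $\alpha(N)$ that is uniform in $s$ (using that $\rho$ is equivalent to the Euclidean distance on $\T^d$ and that $\T^d$ is compact, so moments are uniformly controlled). Summing over $s$ gives \eqref{Eq-PropChaos1}.

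For \eqref{Eq-PropChaos2}, let $\Y_m$ be such that $(\X_m,\Y_m)$ is an optimal $\mathcal W_{\rho_N}$-coupling of $\eta_0^{\otimes N}R^m$ and $\eta_m^{\otimes N}$. Writing $\Y_m=(Y_{m,1},\dots,Y_{m,N})$ with $Y_{m,i}$ i.i.d.\ from $\eta_m$, the triangle inequality gives
\begin{align*}
\mathbb E\bigl[\mathcal W_\rho(\pi(\X_m),\eta_m)\bigr]\ \leqslant\ \mathbb E\bigl[\mathcal W_\rho(\pi(\X_m),\pi(\Y_m))\bigr]+\mathbb E\bigl[\mathcal W_\rho(\pi(\Y_m),\eta_m)\bigr].
\end{align*}
The second term is again handled by Fournier--Guillin and is bounded by $C\alpha(N)$. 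For the first, the synchronous index-coupling $\mathcal W_\rho(\pi(\X_m),\pi(\Y_m))\leqslant \frac1N\sum_{i=1}^N \rho(X_{m,i},Y_{m,i})=\frac{1}{N}\rho_N(\X_m,\Y_m)$ combined with optimality of the coupling gives $\mathbb E[\mathcal W_\rho(\pi(\X_m),\pi(\Y_m))]\leqslant \frac{1}{N}\mathcal W_{\rho_N}(\eta_0^{\otimes N}R^m,\eta_m^{\otimes N})$, and the first part \eqref{Eq-PropChaos1} eliminates the $N$ factor, leaving a $\gamma\alpha(N)\sum_{s=1}^m(1-\gamma\kappa)^{s-1}$ contribution. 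Adding the two pieces yields \eqref{Eq-PropChaos2}.

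The main technical obstacle is the empirical-measure rate: one must invoke a Fournier--Guillin-type bound on $\mathbb E[\mathcal W_\rho(\pi(X),\eta)]$ for i.i.d.\ samples $X$, and verify that it applies uniformly in $\eta\in\mathcal P(\T^d)$ (since the $\eta_s$ are a priori different probabilities). Compactness of the torus and equivalence of $\rho$ with the Euclidean distance make this straightforward, but this step is essential because it is the only place where the dimension-dependent speed $\alpha(N)$ enters; everything else in the argument is dimension-free and relies purely on the Wasserstein contraction of $R$ and the stability of the product structure under the non-linear kernels $Q_{\eta_s}$.
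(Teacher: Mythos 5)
Your proof is correct and follows essentially the same route as the paper: your telescoping over the interpolating family $\eta_s^{\otimes N}R^{m-s}$ is just the unrolled form of the paper's one-step induction combining Proposition~\ref{Prop-TempsLong}, Lemma~\ref{Lem-Prop-chaos} and the Fournier--Guillin bound (uniform in $s$ by compactness of $\T^d$), and your treatment of \eqref{Eq-PropChaos2} via an optimal coupling, the index-coupling bound $\mathcal W_\rho(\pi(\X_m),\pi(\Y_m))\leqslant \frac1N\rho_N(\X_m,\Y_m)$ and a second application of Fournier--Guillin is exactly the paper's argument.
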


Remark that when $\kappa>0$, $\gamma \sum_{s=1}^m(1-\gamma\kappa)^{s-1} \leqslant 1/\kappa$ so that \eqref{Eq-PropChaos1}  and \eqref{Eq-PropChaos2} yield uniform in time estimates. On the contrary, when $k<0$, the estimates are exponentially bad in $t=m\gamma$.

\begin{proof}
We start with the proof of \eqref{Eq-PropChaos1}, for $m\geqslant 1$ (the case $m=0$ being trivial). From the triangular inequality, Proposition \ref{Prop-TempsLong} and Lemma \ref{Lem-Prop-chaos},
\begin{eqnarray*}
r_m \ :=\ \mathcal W_{\rho_N} \po \eta_0^{\otimes N}  R^m,\eta_m^{\otimes N}\pf  
& \leqslant & \mathcal W_{\rho_N} \po \eta_0^{\otimes N}  R^m,\eta_{m-1}^{\otimes N} R\pf  + \mathcal W_{\rho_N} \po \eta_{m-1}^{\otimes N} R, \eta_{m-1}^{\otimes N} Q_{\eta_{m-1}}^{\otimes N} \pf \\
& \leqslant & (1-\kappa\gamma) r_{m-1} + \gamma N C_1 \int_{\T^{dN}} \mathcal W_\rho\po \pi(x),\eta_{m-1}\pf \eta_{m-1}^{\otimes N}(\dd x)\,.
\end{eqnarray*}
Since $\mathcal W_\rho \leqslant \mathcal W_1$, estimating the last term is a classical question, that is to bound the expected Wasserstein distance between the empirical measure of a sample of $N$ independent and identically distributed random variables and their common law. From \cite[Theorem 1]{FournierGuillin} (and since on the torus the moments of probability measures are uniformly bounded), there exists some $C'>0$ independent from $\eta_0$, $m$, $N$ and $\gamma$ such that
\[ \int_{\T^{dN}} \mathcal W_1\po \pi(x),\eta_{m-1}\pf \eta_{m-1}^{\otimes N}(\dd x) \ \leqslant \ C'\alpha(N)\,.\]
Since $r_0 = 0$, a direct induction  concludes the proof of \eqref{Eq-PropChaos1}. 
%

 To prove \eqref{Eq-PropChaos2}, let $(\X,\Y)$ be an optimal coupling of $\eta_0^{\otimes N} R^m$ and $\eta_m^{\otimes N}$. Considering $J\sim \mathcal U(\cco 1,N\ccf)$ independent from $(\X,\Y)$ then, conditionally to $(\X,\Y)$, $(X_J,Y_J)$ is a coupling of $\pi(\X)$ and $\pi(\Y)$, so that
\[\mathcal W_\rho \po \pi (\X),\pi(\Y) \pf \ \leqslant \ \mathbb E\po \rho(X_J,Y_J)\ | \ (\X,\Y)\pf \ = \ \frac1N\rho_N(\X,\Y)\,.\] 
Taking the expectation in
\[\mathcal W_\rho \po \pi(\X),\eta_k\pf \ \leqslant \ \mathcal W_\rho \po \pi(\X),\pi(\Y)\pf + \mathcal W_\rho \po \pi(\Y),\eta_k\pf\,,\]
we conclude with \eqref{Eq-PropChaos1} and  \cite[Theorem 1]{FournierGuillin} again.
\end{proof}

\begin{cor}\label{Cor-Chaos}
With the notations of Proposition \ref{Prop-chaos}, for all $k\in\cco 1,N\ccf$,
\begin{eqnarray*}
\mathcal W_{\rho_k} \po \mathcal Law(X_{1,m},\dots,X_{k,m}),\eta_m^{\otimes k}\pf & \leqslant & C_2 k \alpha(N) \gamma \sum_{s=1}^m(1-\gamma\kappa)^{s-1}\,.
\end{eqnarray*}
\end{cor}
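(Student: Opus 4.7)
The plan is to deduce the $k$-marginal estimate from the bound \eqref{Eq-PropChaos1} on the full $N$-particle system via an exchangeability argument. The crux is that both measures involved in \eqref{Eq-PropChaos1} are symmetric under permutations of the $N$ coordinates, so one can symmetrize any optimal coupling and then take marginals.

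First, I would verify that the law of $\X_m$ under $\mathbb P_{\eta_0^{\otimes N}}$ is exchangeable. This is immediate by induction: $\eta_0^{\otimes N}$ is symmetric, and the kernel $R(x,\cdot) = Q_{\pi(x)}(x_1,\cdot)\otimes\dots\otimes Q_{\pi(x)}(x_N,\cdot)$ commutes with permutations since $\pi(x)$ depends on $x$ only through its empirical distribution. The product measure $\eta_m^{\otimes N}$ is obviously symmetric as well. In particular, the law of any $k$-tuple of coordinates of $\X_m$ coincides with the law of the first $k$, namely $\mathcal Law(X_{1,m},\dots,X_{k,m})$, and similarly for $\eta_m^{\otimes N}$.

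Next, I would take an optimal coupling $(\X,\Y)$ of $\eta_0^{\otimes N} R^m$ and $\eta_m^{\otimes N}$ for $\mathcal W_{\rho_N}$, and let $\sigma$ be a uniform random permutation of $\cco 1,N\ccf$ independent of $(\X,\Y)$. Writing $\sigma\X=(X_{\sigma(1)},\dots,X_{\sigma(N)})$ and similarly for $\sigma\Y$, the pair $(\sigma\X,\sigma\Y)$ is still a coupling of the two (symmetric) laws, and $\mathbb E[\rho_N(\sigma\X,\sigma\Y)]=\mathbb E[\rho_N(\X,\Y)]$ by symmetry of $\rho_N$. Now the $k$-tuple $((\sigma\X)_{1\ldots k},(\sigma\Y)_{1\ldots k})$ is a coupling of the $k$-marginals, i.e. of $\mathcal Law(X_{1,m},\dots,X_{k,m})$ and $\eta_m^{\otimes k}$.

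Finally, I would compute
\[
\mathbb E\left[\sum_{i=1}^k \rho\left((\sigma\X)_i,(\sigma\Y)_i\right)\right] \ = \ \sum_{i=1}^k \frac{1}{N}\sum_{j=1}^N \mathbb E\left[\rho(X_j,Y_j)\right] \ = \ \frac{k}{N}\,\mathbb E\left[\rho_N(\X,\Y)\right] \ = \ \frac{k}{N}\,\mathcal W_{\rho_N}\po \eta_0^{\otimes N} R^m,\eta_m^{\otimes N}\pf.
\]
Applying \eqref{Eq-PropChaos1} from Proposition \ref{Prop-chaos} gives the announced bound. There is no real obstacle here: the only subtle point is the exchangeability of $\eta_0^{\otimes N} R^m$, which is built into the definition of $R$, and the rest is the standard observation that the $k$-marginal Wasserstein distance between two exchangeable measures is at most $k/N$ times the joint one.
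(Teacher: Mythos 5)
Your proposal is correct and follows essentially the same route as the paper: symmetrize an optimal $\mathcal W_{\rho_N}$-coupling of $\eta_0^{\otimes N}R^m$ and $\eta_m^{\otimes N}$ by an independent uniform permutation, use exchangeability to identify the first $k$ coordinates as a coupling of the $k$-marginals, and bound its cost by $\frac{k}{N}\mathcal W_{\rho_N}$ before invoking \eqref{Eq-PropChaos1}. The only difference is that you spell out the exchangeability of $\eta_0^{\otimes N}R^m$ (which the paper simply asserts), and that is a welcome clarification rather than a deviation.
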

\begin{proof}
Let $(\X,\Y)$ be an optimal coupling of $\eta_0^{\otimes N} R^m$ and $\eta_m^{\otimes N}$, and let $\sigma$ be uniformly distributed over the set of permutations of $N$ elements,  independent from $(\X,\Y)$. Since the laws of $\X$ and $\Y$ are exchangeable, $\X_\sigma = (X_{\sigma(1)},\dots,X_{\sigma(N)})$ has the same law as $\X$, in particular $(\X_{\sigma(1)},\dots,X_{\sigma(k)})$ has the same law as $(X_1,\dots,X_k)$. The same goes for $\Y_\sigma$, and
\begin{eqnarray*}
\mathbb E\po \sum_{i=1}^k \rho(X_{\sigma(i)},Y_{\sigma(i)})\pf \ = \ k\mathbb E\po  \rho(X_{\sigma(1)},Y_{\sigma(1)})\pf & = & \frac kN \mathbb E \po \rho_N(\X,\Y)\pf \\
&=& \frac k N \mathcal W_{\rho_N} \po \eta_0^{\otimes N} R^m,\eta_m^{\otimes N}\pf\,,
\end{eqnarray*}
and Proposition \ref{Prop-chaos} concludes.
\end{proof}
Corollary \ref{Cor-Chaos} means that, for any fixed $k\in\N_*$, as $N$ goes to infinity, the $k$-marginals of the system of particles converge toward the law of $k$ independent non-linear chains, which is the so-called propagation of chaos phenomenon.

\subsection{Discrete to continuous time}\label{Sec-continu}

We start by defining $ (\overline{Y}_t)_{t\geqslant 0}$ and $(\bX_t)_{t\geqslant 0}$ the continuous-time analoguous of the chains $(Y_k)_{k\in\N}$ on $\T^d$ and  $(\X_k)_{k\in\N}$ on $\T^{dN}$ defined in Section \ref{SubSec-DefProcess}. We start with the non-linear process. For $t\geqslant 0$, let
\[\overline{\eta}_t \ = \ \mathcal Law(Z_t\ |\ T>t)\]
where $Z$ solves \eqref{Eq-EDS} with initial distribution $\eta_0$ and $T$ is given by \eqref{Eq-DeathTime}. We define $(\overline Y_t)_{t\geqslant 0}$ as follows. Set $\overline Y_0=Z_0\sim\eta_0$, $T_0=0$ and suppose that $T_n$ and $(\overline Y_t)_{t\in[0,T_n]}$ have been defined for some $n\in\N$. Let $(B_t)_{t\geqslant0}$ be a new Brownian motion on $\T^d$ and $E\sim \mathcal E(1)$, independent one from the other. Let $\tilde Y$ be the solution of 
\[\dd \tilde Y_t \ = \ b(\tilde Y_t)\dd t + \dd B_t\]
for $t\geqslant T_n$  with $\tilde Y_{T_n} = \overline Y_{T_n}$ and let 
\[T_{n+1} \ =  \  \inf\left\{t>T_n,\ E\leqslant \int_{T_n}^t \lambda (\tilde Y_s)\dd s\right\}\,.\]
For $t\in(T_n,T_{n+1})$, set $\overline Y_t = \tilde Y_t$. Finally, draw  a new $\overline Y_{T_{n+1}}$ according to $\overline{\eta}_{T_{n+1}}$. By induction $T_n$ and $(Y_t)_{t\in[0,T_n]}$ are then defined for all $n\in\N$. Since $\lambda$ is bounded, $T_n$ almost surely goes to infinity when $n\rightarrow \infty$ so that $(\overline Y_t)_{t\geqslant 0}$ is defined for all $t\geqslant 0$. Similarly to Proposition \ref{Prop-EgaliteLoi}, it can be established that $\mathcal Law(\overline Y_t) = \overline \eta_t$ for all $t\geqslant 0$.

Now, as in Section \ref{SubSec-DefProcess}, from the non-linear process $ (\overline{Y}_t)_{t\geqslant 0}$, the interacting particles $(\bX_t)_{t\geqslant 0}$ are obtained by replacing $\overline{\eta}_t$ by the empirical distribution of the system when particles die and are resurrected.

More precisely, let $(E_{i,k},B_{i,k},J_{i,k})_{i\in\cco 1,N\ccf,k\in\N}$ be a family of independent triplet of independent random variables where, for all $i\in\cco 1,N\ccf$ and $k\in\N$, $E_{i,k}\sim \mathcal E(1)$, $J_{i,k}\sim \mathcal U(\cco 1,N\ccf)$ (except if $k=0$, in which case $J_{i,k}=i$ almost surely) and $B_{i,k}=(B_{i,k,t})_{t\geqslant 0}$ is a $d$-dimensional Brownian motion. From these variables, we simultaneously define by induction the process and its death times $(T_{i,k})_{i\in\cco 1,N\ccf,k\in\N}$ as follows. First, set $\bX_0=x$ and $T_{i,0} = 0$ for all $i\in\cco 1,N\ccf$. For all $i\in\cco 1,N\ccf$, set $\hat X_{i,0,0}=x_i$ and for $k\geqslant 1$, set 
\begin{eqnarray}\label{Eq-DefContinuResurrect}
 \hat X_{i,k,T_{i,k}} &= &\lim_{t\overset{<}\rightarrow T_{i,k}} \overline{X}_{J_{i,k},t}\,.
\end{eqnarray}
For all $k\in\N$, for $t\geqslant T_{i,k}$, let $\hat  X_{i,k}$ solve 
\[\dd \hat  X_{i,k,t} \ = \ b\po \hat X_{i,k,t}\pf \dd t + \dd B_{i,k,t}\,,\]
 set
\[T_{i,k+1} \ = \ T_{i,k} + \inf\left \{t\geqslant 0,\ E_{i,k} \leqslant \int_0^t \lambda \po \hat  X_{i,k,s}  \pf \dd s \right\}\]
and for all $t\in [T_{i,k},T_{i,k+1})$, set $\overline{X}_{i,t} = \hat  X_{i,k,t}$.

Then $\bX_t = (\overline{X}_{1,t},\dots,\overline{X}_{N,t})$ is well-defined for all $t\geqslant 0$. Indeed, it is well defined for all $t< S_1 := \min\{ T_{i,1},\ i\in\cco 1,N\ccf\}$ the first death time of some particle, and is equal on this interval to $(\hat X_{1,0,t},\dots,\hat X_{N,0,t})$, which is continuous on $[0,S_1]$. Hence, the limits involved in \eqref{Eq-DefContinuResurrect} are well defined for $k=1$ and all $i\in\cco 1,N\ccf$ such that $T_{i,1}=S_1$. Then the algorithm above similarly defines the process up to the second time some particles die, etc. 

Remark that most of the times \eqref{Eq-DefContinuResurrect}  simply reads $\hat X_{i,k,T_{i,k}} = \overline{X}_{J_{i,k},T_{i,k}}$ (at its $k^{th}$ death time, the $i^{th}$ particle is resurrected at the current position of the $J_{i,k}^{th}$ particle). Indeed, the only case when this is not true is when the $J_{i,k}^{th}$ particle dies at time $T_{i,k}$. Since the   probability that two or more particles die simultaneously is zero, this almost surely only occurs if $J_{i,k} = i$, i.e. if the particle is resurrected at its own position.  

Denote $(P_{t})_{t\geqslant 0}$ the Markov semi-group associated with $(\bX_t)_{t\geqslant 0}$, i.e. for all $t\geqslant 0$, $P_{t}$ is the Markov kernel given by
\[P_{t} f(x) \ = \ \mathbb E\po f(\X_t)\ | \ \X_0 = x\pf\,.\]
We sometimes write $P_t = P_{N,t}$ to specify the number of particles.

\begin{lem}\label{Lem-continu}
There exist $C_4>0$ such that for all $N\in\N$, $\gamma\in(0,\gamma_0]$ and $\mu\in\mathcal P(\T^{dN})$,
\[\mathcal W_{\rho_N} \po \mu R_{N,\gamma},\mu P_{N,\gamma}\pf \ \leqslant \  N C_4\gamma^{3/2}\,.\]
\end{lem}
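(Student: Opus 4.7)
The plan is a one-step coupling of the kernels $R_{N,\gamma}$ and $P_{N,\gamma}$ using shared noise across the two processes, followed by a case analysis on the rare death events. By the same convexity argument used in the proof of Proposition~\ref{Prop-TempsLong} (conditioning on an optimal coupling of $\mu$ with itself), it suffices to treat $\mu=\delta_x$ for a fixed $x\in\T^{dN}$. I build $\Y\sim \delta_x R_{N,\gamma}$ and $\bX_\gamma\sim \delta_x P_{N,\gamma}$ on the same probability space, particle by particle: for each $i\in\cco 1,N\ccf$, drive both the discrete Euler step $\tilde X_i := x_i+\gamma b(x_i)+B_{i,\gamma}$ and the continuous trajectory $\hat X_{i,0,t}$ (before its first death) by the same Brownian motion $B_i$ on $[0,\gamma]$; use a common $E_i\sim\mathcal E(1)$ to determine killing in both processes (discretely: $E_i\leqslant \gamma\lambda(\tilde X_i)$; continuously: $E_i\leqslant \int_0^\gamma\lambda(\hat X_{i,0,s})\dd s$); and use a common resurrection index $J_i\sim\mathcal U(\cco 1,N\ccf)$ whenever a rebirth occurs, with fresh shared noise for any sub-step needed after rebirth.

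The per-particle contribution then splits according to three events. On the event $A_i$ that neither process kills particle $i$ (probability $1-O(\gamma)$ since $\lambda$ is bounded), $\rho(Y_i,\overline X_{i,\gamma})\leqslant |\tilde X_i-\hat X_{i,0,\gamma}|$ is the classical strong one-step Euler error, bounded in $L^1$ by integrating the Lipschitz bound on $b$ against $\mathbb E|\hat X_{i,0,s}-x_i|=O(\sqrt s)$, which yields $O(\gamma^{3/2})$. On the event $B_i$ that exactly one death occurs (probability $O(\gamma)$), both processes resurrect at the same index $J_i$: the continuous one at $\overline X_{J_i,T_{i,1}}$, the discrete one at $x_{J_i}$, positions that typically differ by $O(\sqrt\gamma)$ since particle $J_i$ has moved by at most $O(\sqrt\gamma)$ on $[0,\gamma]$; the subsequent diffusion and Euler sub-step on the remaining time add another $O(\sqrt\gamma)$, for a total contribution $O(\gamma)\cdot O(\sqrt\gamma)=O(\gamma^{3/2})$. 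The cascade event $C_i$ (particle $i$ dies twice, or its target $J_i$ itself dies in $[0,\gamma]$) has probability $O(\gamma^2)$ and contributes at most $O(\gamma^2)\cdot\|\rho\|_\infty=O(\gamma^2)$. Summing over $i$ and using linearity of $\rho_N$ yields the claimed $NC_4\gamma^{3/2}$ bound.

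The principal technical obstacle is the alignment of the killing mechanisms on $B_i$: the discrete killing probability $\gamma\lambda(\tilde X_i)+O(\gamma^2)$ and its continuous counterpart $\int_0^\gamma\lambda(\hat X_{i,0,s})\dd s$ differ in $L^1$ by $O(\gamma^{3/2})$ (by the same Euler argument applied to $\lambda$), so the maximal coupling driven by the shared $E_i$ produces simultaneous killings on all of $B_i$ except on a set of probability $O(\gamma^{3/2})$; on this exceptional set the distance is bounded by $\|\rho\|_\infty$, contributing $O(\gamma^{3/2})$ per particle, still of the required order. The inter-particle interaction through $J_i$ is handled by the union bound that sends joint-death events into the cascade regime $C_i$, which is precisely what keeps the final estimate linear in $N$.
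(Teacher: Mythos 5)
Your proposal is correct and follows essentially the same route as the paper's proof: a synchronous coupling that shares the Brownian increments, the exponential clocks and the resurrection indices between $R_{N,\gamma}$ and $P_{N,\gamma}$, followed by the same four-way case split (no death; one simultaneous death contributing $O(\gamma)\cdot O(\sqrt\gamma)$; a killing mismatch whose probability is bounded by the $L^1$ difference of the integrated rates, hence $O(\gamma^{3/2})$; and an $O(\gamma^2)$ cascade event), summed over the $N$ particles. The only step you gloss over, which the paper treats with an auxiliary process and the strong Markov property, is the decoupling of the post-rebirth displacement from the death event in the one-death case, but this is a technical detail within the same argument rather than a different approach.
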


\begin{proof}
As in the proof of Lemma~\ref{Lem-Prop-chaos}, it is sufficient to treat the case $\mu=\delta_x$ with a fixed $x\in\T^{dN}$. Let $(\bX_t)_{t\geqslant0}$ be defined as above from random variables $(E_{i,k},B_{i,k},J_{i,k})_{i\in\cco 1,N\ccf,k\in\N}$. In particular, $\bX_{\gamma} \sim \delta_x P_{\gamma}$.

To define $\X_1 \sim \delta_x R$, for all $i\in\cco 1,N\ccf$ and $k\in \N$, consider $(\tilde X_{i,k,t})_{t\geqslant 0}$ the solution to $\tilde X_{i,k,0}=x_{J_{i,k}}$ and
\[\dd \tilde  X_{i,k,t} \ = \ b\po \tilde X_{i,k,0}\pf \dd t + \dd B_{i,k,t}\,.\]
Denoting 
\[H_i = \inf\left \{k\in\N,\ E_{i,k} \geqslant \gamma \lambda\po \tilde  X_{i,k,\gamma}\pf\right\}\,,\]
set $\X_1 := (\tilde X_{1,H_1,\gamma},\dots,\tilde X_{N,H_N,\gamma})$.

Then $(\X_1,\bX_\gamma)$ is a coupling of $R(x,\cdot)$ and $P_{\gamma}(x,\cdot)$, so that
\[\mathcal W_{\rho_N}\po R(x,\cdot),P_{\gamma}(x,\cdot)\pf \ \leqslant \ \mathbb E\po \rho_N( \X_1,\bX_\gamma)\pf \ = \ \sum_{i=1}^N \mathbb E\po \rho( X_{i,1},\overline{X}_{i,\gamma})\pf \,.  \]
 We now distinguish four cases, considering the events 
\begin{eqnarray*}
B_{i,1} &=& \{H_i = 0\text{ and }T_{i,1} >\gamma\}\\
B_{i,2} &=& \{H_i = 1\text{ and }T_{i,1} \leqslant \gamma < T_{i,2}\wedge T_{J_{i,0},1}\}\\
B_{i,3}& =& \{H_i = 1\text{ and }T_{i,1} > \gamma  \}\cup \{H_i = 0\text{ and }T_{i,1} \leqslant \gamma \}\\
B_{i,4} &=& \{H_i\geqslant 2\}\cup\{ T_{i,2}\leqslant \gamma\}\cup\{ T_{i,1}\vee T_{J_{i,0},1}\leqslant \gamma\}\,,
\end{eqnarray*}
that is, respectively: none of the two $i^{th}$ particles dies; both the $i^{th}$ particles die exactly once; one particle dies but not the other; at least two deaths are involved for one of the two particle. For all $i\in\cco 1,N\ccf$, $\Omega \ = \ \cup_{j=1}^4 B_{i,j}$, so that
\[\mathbb E\po \rho( X_{i,1},\overline{X}_{i,\gamma})\pf \ \leqslant \ \mathbb E\po \rho( X_{i,1},\overline{X}_{i,\gamma}) \po \1_{B_{i,1}} +\1_{B_{i,2}} +\1_{B_{i,3}} +\1_{B_{i,4}} \pf \pf \,.  \]
Conclusion follows by gathering the four cases.

\bigskip

\noindent \textbf{Case 1.} It reduces to the classical case of diffusions, since
\begin{eqnarray*}
\mathbb E\po |X_{i,1}-\overline{X}_{i,\gamma}| \1_{B_{i,1}} \pf  & = & \mathbb E\po | \tilde X_{i,0,\gamma}-\hat X_{i,0,\gamma}| \1_{B_{i,1}} \pf\  \leqslant \ \mathbb E\po |\tilde X_{i,0,\gamma}-\hat X_{i,0,\gamma}|\pf\,.
\end{eqnarray*}
Then
\begin{eqnarray*}
 |\tilde X_{i,0,t}-\hat X_{i,0,t}| & = & \left|  \int_0^t \po b(x_i)-b\po \hat X_{i,0,s}\pf\pf \right| \dd s\\
 & \leqslant &    \|\nabla b\|_\infty \int_0^t \po |\tilde X_{i,0,s}-\hat X_{i,0,s}| + |x_i-\tilde X_{i,0,s}|\pf \dd s 
\end{eqnarray*}
By the Gronwall Lemma, for all $t\geqslant 0$, almost surely,
\begin{eqnarray}\label{Eq-DemoCoupleGamma1}
\underset{s\in[0,t]}\sup |\tilde X_{i,0,t}-\hat X_{i,0,t}|  & \leqslant &    \|\nabla b\|_\infty e^{t \|\nabla b\|_\infty } \int_0^t |x_i-\tilde X_{i,0,s}| \dd s \,.
\end{eqnarray}
Since $\tilde X_{i,0,s}$ is a Gaussian variable with mean $x_i+sb(x_i)$ and variance $s$,
\begin{eqnarray}\label{Eq-DemoCoupleGamma}
\mathbb E\po |x_i-\tilde  X_{i,0,s}|  \pf & \leqslant &  s b(x_i) +  \mathbb E\po |x_i+sb(x_i)-\tilde  X_{i,0,s}|  \pf \ \leqslant \ \| b\|_\infty s + \sqrt s  \,. 
\end{eqnarray}
As a consequence, for $\gamma \leqslant \gamma_0$,
\begin{eqnarray}\label{Eq-EulerEstimate}
\mathbb E\po |\tilde X_{i,0,\gamma}-\hat X_{i,0,\gamma}|\pf & \leqslant &   \| \nabla b\|_\infty e^{\gamma_0\|\nabla b\|_\infty} \int_0^\gamma  \mathbb E\po |x_i-\tilde X_{i,0,s}|\pf \dd s \ \leqslant \     c \gamma^{3/2} \,.
\end{eqnarray}
\bigskip

\noindent \textbf{Case 2.} We bound
\begin{eqnarray*}
\mathbb E\po |X_{i,1}-\overline{X}_{i,\gamma}| \1_{B_{i,2}} \pf  & \leqslant & \mathbb E\po \po | \tilde X_{i,1,\gamma}-x_{J_{i,0}}|  + | \hat X_{i,1,\gamma}- x_{J_{i,0}}|\pf \1_{B_{i,2}} \pf\,.
\end{eqnarray*}
Similarly to \eqref{Eq-DemoCoupleGamma},
\begin{eqnarray*}
\mathbb E\po  | \tilde X_{i,1,\gamma}-x_{J_{i,0}}|  \1_{B_{i,2}} \pf &\leqslant & \mathbb E\po  | \tilde X_{i,1,\gamma}-x_{J_{i,0}}|  \1_{E_{i,0} \leqslant \gamma \|\lambda\|_\infty} \pf   \ \leqslant \ c \gamma^{3/2}   \,,
\end{eqnarray*}
where we used the independence of $E_{i,0}$ from $J_{i,1}$ and $(\tilde X_{i,1,t})_{t\geqslant 0}$. Denote $(X'_i)_{t\geqslant 0}$ the solution of
\[\dd    X_{i,t}' \ = \ b\po  X_{i,t}'\pf \dd t + \left\{\begin{array}{ll}
\dd B_{J_{i,0},0,t} & \text{for } t< T_{i,0} \\
\dd B_{i,1,t} & \text{for }t\geqslant T_{i,0}\,. 
\end{array}\right.\]
with $ X_{i,0}' = x_{J_{i,0}}$. Under the event $B_{i,2}$, $\hat X_{i,1,\gamma} = X_{i,\gamma}'$. Moreover, $J_{i,0}$, $B_{J_{i,0},0}$ and  $B_{i,1}$ are independent from $T_{i,0}$ and thus, by the strong Markov property, $(X_{i,t}')_{t\geqslant 0}$ is independent from $T_{i,0}$ and conditionally to $J_{i,0}$ it  has the same distribution as $\hat X_{J_{i,0},0,t}$ (namely it is a diffusion solving \eqref{Eq-EDS} with initial condition $x_{J_{i,0}}$). Hence,
\begin{eqnarray*}
\mathbb E  \po | \hat X_{i,1,\gamma}- x_{J_{i,0}}| \1_{B_{i,2}} \pf & \leqslant  & \mathbb E  \po | X_{i,\gamma}'- y_{J_{i,0}}| \1_{E_{i,0} \leqslant \gamma \|\lambda\|_\infty} \pf \ \leqslant c' \gamma ^{3/2}\,.
\end{eqnarray*}

\bigskip

\noindent \textbf{Case 3.} We bound 
\begin{multline*}
\mathbb E\po \rho(X_{i,1},\overline{X}_{i,\gamma}) \1_{B_{i,3}} \pf   \ \leqslant \ \frac1a \mathbb P\po B_{i,3} \pf \\
\ \leqslant \ \frac1a \mathbb P\po \int_0^\gamma  \lambda(\hat X_{i,0,s})\dd s \wedge \po \gamma \lambda(\tilde X_{i,0,\gamma})\pf \leqslant E_{i,0} \leqslant \int_0^\gamma \lambda(\hat X_{i,0,s})\dd s \vee \po \gamma \lambda(\tilde X_{i,0,\gamma})\pf \pf\\
\ =  \ \frac1a\mathbb E\po \left|\exp\po - \int_0^\gamma  \lambda(\hat X_{i,0,s})\dd s\pf - \exp\po - \gamma \lambda(\tilde X_{i,0,\gamma})\pf\right|\pf\\
\ \leqslant \ \frac1a\mathbb E\po \left| \int_0^\gamma  \lambda(\hat X_{i,0,s})\dd s  - \gamma \lambda(\tilde X_{i,0,\gamma})\right|\pf \,.
\end{multline*}
Now,
\begin{eqnarray*}
\left| \int_0^\gamma  \lambda(\hat X_{i,0,s})\dd s  - \gamma \lambda(\tilde X_{i,0,\gamma})\right| & \leqslant & L_\lambda \po   \int_0^\gamma  |\hat X_{i,0,s} - x_i  | \dd s+  \gamma  |  x_i   -   \tilde X_{i,0,\gamma}  |\pf\,.
\end{eqnarray*}
Using \eqref{Eq-DemoCoupleGamma1} together with \eqref{Eq-DemoCoupleGamma} yields
\[\mathbb E\po \rho\po X_{i,1},\overline{X}_{i,\gamma}\pf \1_{B_{i,3}} \pf   \ \leqslant \ c_3 \gamma^{3/2}\,. \]

\bigskip

\noindent \textbf{Case 4.} We bound 
\begin{eqnarray*}
\mathbb E\po \rho\po X_{i,1},\overline{X}_{i,\gamma}\pf \1_{B_{i,4}} \pf   & \leqslant & \frac1a \mathbb P\po B_{i,4} \pf \\
& \leqslant & \mathbb P \po E_{i,0}\vee E_{i,1} \leqslant \gamma \|\lambda\|_\infty\pf + \mathbb P \po E_{i,0}\vee E_{J_{i,0},0} \leqslant \gamma \|\lambda\|_\infty\pf\\
& \leqslant & 2 \po 1 - e^{-\gamma \|\lambda\|_\infty}\pf ^2 \ \leqslant\ 2\gamma^2 \|\lambda\|_\infty^2\,.
\end{eqnarray*}

\end{proof}

\begin{prop}\label{Prop-continu}
There exist $C_5>0$ such that for all $N\in\N$ , $\gamma\in(0,\gamma_0]$ and $\eta_0\in\mathcal P(\T^{d})$,
\begin{eqnarray*}
\mathcal W_{\rho_N} \po \mu R^m_{N,\gamma},\mu P_{N,m\gamma}\pf & \leqslant & \sqrt\gamma N C_5  \gamma \sum_{s=1}^m(1-\gamma\kappa)^{s-1}\,.
\end{eqnarray*}
\end{prop}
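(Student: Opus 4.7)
My plan is to combine the one-step discrete-vs-continuous estimate of Lemma \ref{Lem-continu} with the Wasserstein contraction of Proposition \ref{Prop-TempsLong} via a telescoping argument. The natural interpolants between $\mu R_{N,\gamma}^m$ and $\mu P_{N,m\gamma}$ are the probability measures
\[a_s \ := \ \mu P_{N,s\gamma} R_{N,\gamma}^{m-s}\,, \qquad s=0,\dots,m\,,\]
so that $a_0 = \mu R_{N,\gamma}^m$ and $a_m = \mu P_{N,m\gamma}$. The key observation is that the telescoping difference factors as
\[a_s - a_{s+1} \ = \ \bigl[\mu P_{N,s\gamma} R_{N,\gamma} - \mu P_{N,s\gamma} P_{N,\gamma}\bigr] R_{N,\gamma}^{m-s-1}\,,\]
exposing on the left an inner one-step discretization error, to which Lemma \ref{Lem-continu} applies, and on the right $m-s-1$ iterations of the discrete kernel $R_{N,\gamma}$, to which Proposition \ref{Prop-TempsLong} applies.

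Putting these two building blocks together, $(m-s-1)$ applications of Proposition \ref{Prop-TempsLong} give
\[\mathcal W_{\rho_N}(a_s,a_{s+1}) \ \leqslant \ (1-\gamma\kappa)^{m-s-1}\, \mathcal W_{\rho_N}\bigl(\mu P_{N,s\gamma} R_{N,\gamma},\ \mu P_{N,s\gamma} P_{N,\gamma}\bigr)\,,\]
while Lemma \ref{Lem-continu} applied with the probability measure $\mu P_{N,s\gamma}\in \mathcal P(\T^{dN})$ bounds the remaining Wasserstein distance by $N C_4 \gamma^{3/2}$, uniformly in $s$. Summing the triangular estimate $\mathcal W_{\rho_N}(\mu R_{N,\gamma}^m,\mu P_{N,m\gamma}) \leqslant \sum_{s=0}^{m-1} \mathcal W_{\rho_N}(a_s,a_{s+1})$ and reindexing $k=m-s$ converts $\sum_{s=0}^{m-1}(1-\gamma\kappa)^{m-s-1}$ into $\sum_{k=1}^{m}(1-\gamma\kappa)^{k-1}$, which, together with the factorization $\gamma^{3/2}=\sqrt\gamma \cdot \gamma$, yields the claimed bound with $C_5=C_4$.

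The only subtle point is the choice of interpolant: the continuous-time semigroup $P_{N,\cdot}$ must appear on the left and the discrete kernel $R_{N,\gamma}$ on the right, so that the already-established Wasserstein contraction of $R_{N,\gamma}$ can absorb the remaining composition $R_{N,\gamma}^{m-s-1}$ and produce the geometric factor $(1-\gamma\kappa)^{m-s-1}$ on which the whole argument hinges. Reversing the ordering would instead require a Wasserstein contraction for the continuous semigroup $P_{N,t}$, a property not established in this paper; with the ordering above, by contrast, no regularity of $P_{N,t}$ is needed beyond the mere fact that $\mu P_{N,s\gamma}$ is a probability measure on $\T^{dN}$, and the proof reduces entirely to Proposition \ref{Prop-TempsLong} and Lemma \ref{Lem-continu}.
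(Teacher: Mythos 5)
Your proof is correct, and it is essentially the paper's argument: the paper proves the same bound by a one-step induction on $m$ (triangle inequality through $\mu P_{N,(m-1)\gamma}R_{N,\gamma}$, then Proposition \ref{Prop-TempsLong} and Lemma \ref{Lem-continu}), which is exactly your telescoping sum over the interpolants $\mu P_{N,s\gamma}R_{N,\gamma}^{m-s}$ in unrolled form. Both routes use the same two ingredients and give the same geometric sum, so no further comment is needed.
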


As for Proposition \ref{Prop-chaos}, when $\kappa>0$, $\gamma \sum_{s=1}^m(1-\gamma\kappa)^{s-1} \leqslant 1/\kappa$ so that \eqref{Eq-PropChaos1}  and \eqref{Eq-PropChaos2} yield uniform in time estimates. On the contrary, when $\kappa<0$, the estimates are exponentially bad in $t=m\gamma$.

\begin{proof}
The proof is similar to Proposition \ref{Prop-chaos}. Denoting $\mu_m = \mu R^m$ and $\nu_m = \mu P_{m\gamma}$, from the triangular inequality, Proposition \ref{Prop-TempsLong} and Lemma \ref{Lem-continu},
\begin{eqnarray*}
r_m \ :=\ \mathcal W_{\rho_N} \po \mu_m,\nu_m\pf & \leqslant & \mathcal W_{\rho_N} \po \mu_m,\nu_{m-1} R\pf  + \mathcal W_{\rho_N} \po \nu_{m-1} R,\nu_{m-1} P_{\gamma}\pf \\
& \leqslant & (1-\gamma\kappa) r_{m-1} + N C_4\gamma^{3/2}\,,
\end{eqnarray*}
and an induction concludes.
\end{proof}

\subsection{Conclusion}\label{Sec-conclusion}

In this section we use the notations of the previous ones, in particular $\kappa$ is given by \eqref{Eq-ConditionPerturb} and the constants $C_2$, $C_3$ and $C_5$  are those of Propositions \ref{Prop-chaos} and \ref{Prop-continu}. We can now gather all these previous results.

\nv{
Letting either $\gamma$ vanish or $N$ go to infinity in Proposition \ref{Prop-TempsLong}, we obtain long-time convergence for, respectively, the non-homogeneous self-interacting Markov chain $(Y_k)_{k\in\N}$ introduced in Section \ref{SubSec-DefProcess} and the continuous-time Markov chain $(\bX_t)_{t\geqslant 0}$ defined in Section \ref{Sec-continu}.

\begin{cor}\label{Cor-nonlineairetempslong}
Let $(\eta_n)_{n\in\N}$ be such as defined in Section \ref{SubSec-DefProcess}, and $(\tilde \eta_n)_{n\in\N}$ be similarly defined but with a different initial distribution $\tilde \eta_0 \in \mathcal P(\T^d) $. For all $m\in\N$ and all $\gamma\in(0,\gamma_0]$, 
\[ \mathcal W_{\rho}\po \eta_m,\tilde \eta_m\pf \ \leqslant \ \po 1-\gamma \kappa \pf^m \mathcal W_{\rho}(\eta_0,\tilde \eta_0)\,.\]
\end{cor}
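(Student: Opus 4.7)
The plan is to realize the non-linear chains $(\eta_n)$ and $(\tilde \eta_n)$ as $N\to\infty$ limits of the $N$-particle systems, and transfer the contraction estimate of Proposition~\ref{Prop-TempsLong} from the particle level down to the non-linear limit via Corollary~\ref{Cor-Chaos}.

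Fix $m\in\N$ and $\gamma\in(0,\gamma_0]$. First I would consider two Markov chains on $\T^{dN}$ with transition kernel $R_{N,\gamma}$, started respectively from the product laws $\eta_0^{\otimes N}$ and $\tilde\eta_0^{\otimes N}$. Taking $N$ independent copies of an optimal coupling of $\eta_0$ and $\tilde \eta_0$ gives
\[\mathcal W_{\rho_N}\po \eta_0^{\otimes N},\tilde\eta_0^{\otimes N}\pf \ \leqslant \ N\, \mathcal W_\rho(\eta_0,\tilde\eta_0)\,.\]
Iterating Proposition~\ref{Prop-TempsLong} $m$ times then yields
\[\mathcal W_{\rho_N}\po \eta_0^{\otimes N} R_{N,\gamma}^m,\tilde\eta_0^{\otimes N} R_{N,\gamma}^m\pf \ \leqslant \ N(1-\gamma\kappa)^m \mathcal W_\rho(\eta_0,\tilde\eta_0)\,.\]

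The next step is a symmetrization argument to transfer this joint bound onto one-dimensional marginals. Since $R_{N,\gamma}$ commutes with permutations of the $N$ coordinates and both $\eta_0^{\otimes N}$ and $\tilde\eta_0^{\otimes N}$ are exchangeable, both time-$m$ laws $\mu_m^{(N)} := \eta_0^{\otimes N} R_{N,\gamma}^m$ and $\tilde\mu_m^{(N)}:=\tilde\eta_0^{\otimes N} R_{N,\gamma}^m$ are exchangeable. Given an optimal coupling $(\X,\Y)$ of $\mu_m^{(N)}$ and $\tilde\mu_m^{(N)}$ and $\sigma$ a uniform random permutation independent of $(\X,\Y)$, the pair $(\X_\sigma,\Y_\sigma)$ is still an optimal coupling (its total cost equals $\rho_N(\X,\Y)$ since $\rho_N$ is a symmetric sum), and all marginals $(X_{\sigma(i)},Y_{\sigma(i)})$ share the same law. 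Denoting $\nu_m^{(N)}$ and $\tilde\nu_m^{(N)}$ the common one-particle marginals of $\mu_m^{(N)}$ and $\tilde\mu_m^{(N)}$, this yields
\[\mathcal W_\rho\po \nu_m^{(N)},\tilde\nu_m^{(N)}\pf \ \leqslant \ \mathbb E\po \rho(X_{\sigma(1)},Y_{\sigma(1)})\pf \ = \ \frac{1}{N}\mathbb E\po \rho_N(\X,\Y)\pf \ \leqslant \ (1-\gamma\kappa)^m \mathcal W_\rho(\eta_0,\tilde\eta_0)\,.\]

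Finally, I would apply Corollary~\ref{Cor-Chaos} with $k=1$ to both initial conditions: for the fixed $m$ under consideration, $\mathcal W_\rho(\nu_m^{(N)},\eta_m)$ and $\mathcal W_\rho(\tilde\nu_m^{(N)},\tilde\eta_m)$ are both bounded by a constant (depending on $m$ and $\gamma$, but not on $N$) times $\alpha(N)$, and hence vanish as $N\to\infty$. The conclusion then follows from the triangle inequality
\[\mathcal W_\rho(\eta_m,\tilde\eta_m) \ \leqslant \ \mathcal W_\rho\po \eta_m,\nu_m^{(N)}\pf + \mathcal W_\rho\po \nu_m^{(N)},\tilde\nu_m^{(N)}\pf + \mathcal W_\rho\po \tilde\nu_m^{(N)},\tilde\eta_m\pf\]
by letting $N\to\infty$. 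The only slightly delicate point is the exchangeability/symmetrization argument; apart from that, everything is a direct reuse of the preceding propositions. Note that the result holds irrespective of the sign of $\kappa$, since the $m$-dependent constant in Corollary~\ref{Cor-Chaos} is harmless at fixed $m$.
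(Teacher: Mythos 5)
Your proof is correct and follows essentially the same route as the paper: transfer the uniform-in-$N$ contraction of Proposition~\ref{Prop-TempsLong} to the non-linear chain via propagation of chaos and let $N\to\infty$ at fixed $m$ and $\gamma$. The only cosmetic difference is that you project onto one-particle marginals by exchangeability and invoke Corollary~\ref{Cor-Chaos}, whereas the paper compares the product laws $\eta_m^{\otimes N}$ and $\tilde\eta_m^{\otimes N}$ directly using Proposition~\ref{Prop-chaos} and the identity $\mathcal W_{\rho_N}(\mu^{\otimes N},\nu^{\otimes N})=N\mathcal W_\rho(\mu,\nu)$ before dividing by $N$; your symmetrization step is precisely the observation the paper records in the remark following its proof.
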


\begin{cor}\label{Cor-tempslong-continu}
For all $N\in\N_*$, $t\geqslant 0$ and $\mu,\nu\in\mathcal P(\T^{dN})$,
\[\mathcal W_{\rho_N}\po \mu P_{N,t},\nu P_{N,t}\pf \ \leqslant \ e^{-\kappa t} \mathcal W_{\rho_N}\po \mu  ,\nu \pf \,.\]
\end{cor}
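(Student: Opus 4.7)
The plan is to pass to the continuous-time limit in the discrete-time contraction of Proposition~\ref{Prop-TempsLong}, using Proposition~\ref{Prop-continu} to control the discrete-to-continuous approximation error. Concretely, I would fix $t>0$ and choose the sequence $\gamma_n = t/n$, which lies in $(0,\gamma_0]$ for all $n$ large enough; crucially, with this choice $m_n\gamma_n = t$ exactly, so one never has to argue about continuity of $s\mapsto \mu P_{N,s}$ at a value not of the form $m\gamma$.

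First I would iterate Proposition~\ref{Prop-TempsLong} to get, for all $n$ large enough,
\[
\mathcal W_{\rho_N}\po \mu R_{N,\gamma_n}^{n},\nu R_{N,\gamma_n}^{n}\pf \ \leqslant \ (1-\gamma_n\kappa)^{n}\mathcal W_{\rho_N}(\mu,\nu),
\]
and then observe $(1-\gamma_n\kappa)^{n} = (1 - t\kappa/n)^{n} \to e^{-\kappa t}$ as $n\to\infty$, regardless of the sign of $\kappa$. Next, I would insert $\mu R_{N,\gamma_n}^{n}$ and $\nu R_{N,\gamma_n}^{n}$ via the triangle inequality:
\[
\mathcal W_{\rho_N}\po \mu P_{N,t},\nu P_{N,t}\pf \ \leqslant \ A_n + (1-\gamma_n\kappa)^n\mathcal W_{\rho_N}(\mu,\nu) + B_n,
\]
with $A_n = \mathcal W_{\rho_N}(\mu P_{N,t}, \mu R_{N,\gamma_n}^{n})$ and $B_n$ its analogue for $\nu$.

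To show $A_n,B_n\to 0$, I would apply Proposition~\ref{Prop-continu} with $m=n$ and $\gamma=\gamma_n$ (so $m\gamma=t$), which gives
\[
A_n \ \leqslant \ \sqrt{\gamma_n}\, N C_5\,\gamma_n\sum_{s=1}^{n}(1-\gamma_n\kappa)^{s-1}.
\]
The geometric sum is harmless in both regimes: if $\kappa\neq 0$ it equals $(1-(1-\gamma_n\kappa)^n)/\kappa$, which converges to $(1-e^{-\kappa t})/\kappa$ and is therefore bounded in $n$; if $\kappa=0$ it equals $n\gamma_n=t$. In either case $A_n\leqslant C_{N,t}\sqrt{\gamma_n}\to 0$, and the same argument applies to $B_n$.

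Passing to the limit $n\to\infty$ in the triangle inequality then yields the claim. There is no real obstacle here; the only mildly delicate point is keeping track of the sign of $\kappa$ so that the contraction factor $(1-\gamma_n\kappa)^n$ and the sum in Proposition~\ref{Prop-continu} are handled correctly in the non-contractive regime $\kappa<0$, but both reduce to standard elementary bounds as above.
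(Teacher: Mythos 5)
Your proof is correct and follows essentially the same route as the paper: the paper also writes the triangle inequality around $\mu R_{N,t/m}^m$ and $\nu R_{N,t/m}^m$, applies Proposition~\ref{Prop-TempsLong} to the middle term and Proposition~\ref{Prop-continu} to the two approximation terms, and lets $m\to\infty$ at fixed $t$ and $N$. Your added care with the limit $(1-t\kappa/n)^n\to e^{-\kappa t}$ and with the geometric sum for $\kappa\leqslant 0$ simply makes explicit what the paper leaves implicit.
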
 

\begin{proof}[Proof of Corollary \ref{Cor-nonlineairetempslong}]
The proof is based on the simple equality:
For all $N\in\N$ and $\mu,\nu\in\mathcal P(\T^d)$,
\begin{equation}\label{lemme}
\mathcal W_{\rho_N} \po \mu^{\otimes N},\nu^{\otimes N}\pf \ = \ N \mathcal W_{\rho}\po \mu,\nu\pf\,. 
\end{equation}
Indeed, by considering $N$ independent couplings $(X_i,Y_i)_{i\in\cco 1,N\ccf}$,
\[\mathcal W_{\rho_N}\po \mu^{\otimes N},\nu^{\otimes N}\pf \ \leqslant\ \mathbb E\po \rho_N(\X,\Y)\pf \ = \ \sum_{i=1}^N \mathbb E\po \rho(X_i,Y_i)\pf \ = \ N \mathcal W_{\rho}\po \mu,\nu\pf\,.  \]
Conversely, if $(\X,\Y)$ is an optimal coupling of $\mu^{\otimes N}$ and $\nu^{\otimes N}$, then
\[\mathcal W_{\rho}\po \mu,\nu\pf \ \leqslant \ \mathbb E\po \rho (X_1,Y_1)\pf \ = \ \frac1N \mathbb E\po \rho_N(\X,\Y)\pf \ = \ \frac1N \mathcal W_{\rho}\po \mu^{\otimes N},\nu^{\otimes N}\pf \,.\]
By the triangular inequality,
\begin{eqnarray*}
\mathcal W_{\rho}\po \eta_m^{\otimes N},\tilde \eta_m^{\otimes N}\pf & \leqslant & \mathcal W_{\rho}\po \eta_m^{\otimes N},\eta_0^{\otimes N} R^m\pf + \mathcal W_{\rho}\po \eta_0^{\otimes N} R^m,\tilde \eta_0^{\otimes N} R^m\pf + \mathcal W_{\rho}\po \tilde \eta_0^{\otimes N} R^m,\tilde \eta_m^{\otimes N}\pf\\
& \leqslant &   \po 1-\gamma \kappa \pf^m \mathcal W_{\rho}(\eta_0^{\otimes N},\tilde \eta_0^{\otimes N}) + 2 C_2 N \alpha(N) \gamma \sum_{s=1}^m(1-\gamma\kappa)^{s-1}\,.
\end{eqnarray*}
where we applied Propositions \ref{Prop-TempsLong} and \ref{Prop-chaos}. Using the equality \ref{lemme}, dividing by $N$ and letting $N$ go to infinity concludes the proof of Corollary \ref{Cor-nonlineairetempslong}. 
\end{proof}
Remark that the beginning of the proof also applies for $\mu,\nu \in \mathcal P(\T^{dN})$ that are exchangeable (i.e. invariant by any permutation of the $d$-dimensional coordinates), in which case, denoting, $\mu^{(1)}$ and $\nu^{(1)}$ their $d$-dimensional marginals, we get that
\[\mathcal W_{\rho} \po \mu^{(1)},\nu^{(1)}\pf \ \leqslant \ \frac1N \mathcal W_{\rho_N}\po\mu,\nu\pf\,.\]
\begin{proof}[Proof of Corollary \ref{Cor-tempslong-continu}]
Similarly to the previous proof, Corollary \ref{Cor-tempslong-continu} is a direct consequence of Propositions \ref{Prop-TempsLong} and \ref{Prop-continu}, letting $m$ go to infinity at a fixed $t$ and $N$ in
\begin{multline*}
\mathcal W_{\rho_N}\po \mu P_{N,t},\nu P_{N,t}\pf \ \leqslant \ \mathcal W_{\rho_N}\po \mu P_{N,t},\mu R_{N,t/m}^m\pf + \mathcal W_{\rho_N}\po \mu R_{N,t/m}^m,\nu R_{N,t/m}^m\pf\\ + \mathcal W_{\rho_N}\po \nu R_{N,t/m}^m,\nu P_{N,t}\pf\,.
\end{multline*}
\end{proof}

}

We now turn to the continuous-time limit of the non-linear chain $(Y_k)_{k\in\N}$.

\nv{
\begin{cor}\label{Cor-concl3}
There exists $C_6>0$ such that for all $\eta_0\in\mathcal P(\T^d)$ and all $\gamma \in(0,\gamma_0]$, if $(\eta_n)_{n\in\N}$ is such as defined in Section \ref{SubSec-DefProcess}, and $(\overline \eta_t)_{t\geqslant 0}$ is such as defined in Section \ref{Sec-continu} (with $\overline \eta_0 = \eta_0$), then
\[\mathcal W_{1}\po \eta_1, \overline \eta_{\gamma}\pf \ \leqslant \ C_6 \gamma^{3/2}\nvd{\,,}\]
\nvd{and for all $m\geqslant 1$,}
\[\mathcal W_{\rho} \po \eta_{m},\overline \eta_{m\gamma}\pf \ \leqslant \   \sqrt\gamma  C_6  \gamma \sum_{s=1}^m(1-\gamma\kappa)^{s-1}\,.\]
\end{cor}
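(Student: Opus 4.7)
The plan is to deduce both bounds by taking the limit $N\rightarrow \infty$ of the corresponding $N$-particle estimates already established, reducing to the $d$-dimensional first marginals via exchangeability and identifying the limits through propagation of chaos.

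Concretely, I would apply Lemma~\ref{Lem-continu} (for the one-step bound $m=1$) or Proposition~\ref{Prop-continu} (for general $m$) with initial distribution $\mu = \eta_0^{\otimes N}$, obtaining respectively
\[\mathcal W_{\rho_N}\bigl(\eta_0^{\otimes N} R_{N,\gamma}, \eta_0^{\otimes N} P_{N,\gamma}\bigr) \ \leqslant \ N C_4 \gamma^{3/2}\]
and
\[\mathcal W_{\rho_N}\bigl(\eta_0^{\otimes N} R_{N,\gamma}^m, \eta_0^{\otimes N} P_{N,m\gamma}\bigr) \ \leqslant \ \sqrt\gamma N C_5 \gamma \sum_{s=1}^m (1-\gamma\kappa)^{s-1}\,.\]
Both measures appearing here are exchangeable on $\T^{dN}$ (the dynamics and initial distribution being symmetric under permutations of the particles), so the inequality $\mathcal W_\rho(\mu^{(1)},\nu^{(1)}) \leqslant N^{-1}\mathcal W_{\rho_N}(\mu,\nu)$ from the remark following Corollary~\ref{Cor-nonlineairetempslong} transfers each bound to the first marginals, suppressing the factor $N$. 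By Corollary~\ref{Cor-Chaos} with $k=1$ the first marginal of $\eta_0^{\otimes N} R_{N,\gamma}^m$ converges in $\mathcal W_\rho$ to $\eta_m$ as $N\rightarrow \infty$, and by its continuous-time analog the first marginal of $\eta_0^{\otimes N} P_{N,m\gamma}$ converges to $\overline\eta_{m\gamma}$. The triangle inequality then yields the stated $\mathcal W_\rho$ bound for general $m$ after letting $N\rightarrow \infty$. The $\mathcal W_1$ bound of the first statement follows from the same reasoning with $m=1$, converting the $\mathcal W_\rho$ bound via the equivalence $|x-y|\leqslant \beta^{-1}\rho(x,y)$.

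The hardest part is the continuous-time propagation of chaos needed to identify the $N\rightarrow \infty$ limit of the first marginal of $\eta_0^{\otimes N} P_{N,m\gamma}$ with $\overline\eta_{m\gamma}$. This estimate is not directly quoted in the preceding subsections, but can be obtained by repeating the arguments of Proposition~\ref{Prop-chaos}: one would couple the continuous-time particle system $(\bX_t)_{t\geqslant 0}$ with $N$ independent copies of the non-linear process $(\overline Y_t)_{t\geqslant 0}$ through a continuous-time version of the basic coupling of Proposition~\ref{Prop-CouplBasic}, synchronizing the dynamics between death events and using the continuous-time Wasserstein contractivity of Corollary~\ref{Cor-tempslong-continu} in place of Proposition~\ref{Prop-TempsLong} to close the Gronwall-type recursion. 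Alternatively, one could approximate the continuous-time particle system by its Euler discretization with an auxiliary time step $\gamma'\rightarrow 0$ and combine Proposition~\ref{Prop-continu} with the already available Corollary~\ref{Cor-Chaos}; this detour avoids setting up a new continuous-time coupling but requires controlling the extra error introduced by $\gamma'$.
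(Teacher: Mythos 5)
Your reduction of the $N$-particle bounds to first marginals via exchangeability (Lemma~\ref{Lem-continu}, Proposition~\ref{Prop-continu}, and the remark after Corollary~\ref{Cor-nonlineairetempslong}) is fine, but the whole argument then hinges on identifying the limit, as $N\rightarrow\infty$, of the first marginal of $\eta_0^{\otimes N}P_{N,m\gamma}$ with $\overline\eta_{m\gamma}$, i.e.\ on a finite-time propagation of chaos for the \emph{continuous-time} particle system. This ingredient is not available at this stage: in the paper it appears only as Corollary~\ref{Cor-chaos-continu}, whose proof \emph{uses} Corollary~\ref{Cor-concl3}, so invoking ``its continuous-time analog'' of Corollary~\ref{Cor-Chaos} inverts the logical order. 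Neither of your two proposed fixes closes the gap as stated. The discretization detour is circular: after replacing $P_{N,m\gamma}$ by $R_{N,\gamma'}^{m'}$ (Proposition~\ref{Prop-continu}) and applying Corollary~\ref{Cor-Chaos} at step $\gamma'$, the ``extra error introduced by $\gamma'$'' is exactly $\mathcal W_\rho$ between the nonlinear discrete flow with step $\gamma'$ at time $m\gamma$ and $\overline\eta_{m\gamma}$, i.e.\ the discrete-to-continuous error for the nonlinear evolution, which is precisely what the corollary is supposed to control. The alternative of building a continuous-time version of the basic coupling of Proposition~\ref{Prop-CouplBasic} would indeed give an independent proof, but it is a substantial new piece of analysis (coupling exponential death clocks of particles at different positions, handling rebirths occurring at random continuous times, and closing a Gronwall-type estimate in place of the one-step recursion), not a routine repetition of the discrete argument; as it stands it is only a sketch, so the proof has a genuine gap.

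For contrast, the paper never needs continuous-time particle couplings here: the one-step bound $\mathcal W_1(\eta_1,\overline\eta_\gamma)\leqslant C_6\gamma^{3/2}$ is obtained directly at the level of the nonlinear laws, writing $\eta_1=\mathcal Law(\tilde Z_1\,|\,\tilde T>\gamma)$ (Proposition~\ref{Prop-EgaliteLoi}) and $\overline\eta_\gamma=\mathcal Law(Z_\gamma\,|\,T>\gamma)$, coupling the Euler step synchronously with the diffusion (same Brownian increment, same exponential variable $E$) and paying the conditioning on survival by a factor bounded via $\mathbb P(T>\gamma,\tilde T>\gamma)$, using the Euler estimate \eqref{Eq-EulerEstimate}; the bound for general $m$ then follows from the recursion $r_m\leqslant(1-\gamma\kappa)r_{m-1}+C_6\gamma^{3/2}$, where the contraction factor comes from Corollary~\ref{Cor-nonlineairetempslong} and the additive term from the one-step bound applied with initial law $\overline\eta_{(m-1)\gamma}$. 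If you wish to keep your route, you must either genuinely carry out the continuous-time coupling argument or import an external finite-time propagation-of-chaos result for the continuous-time Fleming--Viot system; otherwise the identification step remains unjustified.
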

\begin{proof}
For the first inequality, we could follow the proof of Lemma \ref{Lem-continu}, but, using the notations of the introduction, we will rather use the fact that
\[\eta_1 = \mathcal Law\po \tilde Z_1\ |\ \tilde T> \gamma\pf\,,\qquad \overline \eta_\gamma = \mathcal Law\po Z_\gamma\ |\ T>\gamma\pf\,,\]
where the gaussian variable $G_0$ in \eqref{Eq-EulerSchemeEDS} is equal to $B_{\gamma}/\sqrt\gamma$ where $(B_t)_{t\geqslant 0}$ is the Brownian motion involved in \eqref{Eq-EDS}, and $T$ and $\tilde T$ are defined with the same $E\sim \mathcal E(1)$. Recall the estimate \eqref{Eq-EulerEstimate} for the error from an Euler scheme to its initial diffusion. Then we bound
\begin{eqnarray*}
\mathbb E\po |\tilde Z_1 - Z_\gamma|\ |\ T>\gamma,\ \tilde T>\gamma\pf & \leqslant  & \po \mathbb P \po T>\gamma,\ \tilde T>\gamma\pf\pf^{-1}  \mathbb E\po |\tilde Z_1 - Z_\gamma|\pf \\
& \leqslant & \po 1 - e^{-\gamma_0 \|\lambda\|_\infty}\pf^{-1} c\gamma^{3/2}\,,
\end{eqnarray*}
which concludes the first part of the corollary.

For the second part, denoting $r_m = \mathcal W_{\rho} \po \eta_{m},\overline \eta_{m\gamma}\pf $, we bound
\begin{eqnarray*}
r_m & \leqslant & \mathcal W_{\rho} \po \eta_{m},\overline \eta_{m-1} Q_{\overline \eta_{m-1}}\pf + \mathcal W_{\rho} \po \overline \eta_{m-1} Q_{\overline \eta_{m-1}},\overline \eta_{m\gamma}\pf\\
& \leqslant & \po 1 - \gamma\kappa\pf r_{m-1} + C_6 \gamma^{3/2}\,,
\end{eqnarray*}
where we used the first part of the corollary and Corollary \ref{Cor-nonlineairetempslong}. An induction concludes.
\end{proof}
}

We can now prove propagation of chaos results for the continuous-time process:

\begin{cor}\label{Cor-chaos-continu}
For all $N\in\N$, $k\in\cco 1,N\ccf$ and all $t\geqslant 0$, if $(\bX_t)_{t\geqslant 0}$ is a Markov process with initial distribution $\eta_0^{\otimes N}$ associated to the semigroup $(P_{N,t})_{t\geqslant0}$ then, first,
\begin{eqnarray*}
\mathcal W_{\rho_k} \po \mathcal Law(\overline X_{1,t},\dots,\overline X_{k,t}),\overline{\eta}_t^{\otimes k}\pf & \leqslant & C_2 k \alpha(N) \int_0^t e^{-\kappa s}\dd s\,,
\end{eqnarray*}
and second, 
\begin{eqnarray*} 
\mathbb E\po \mathcal W_\rho \po \pi(\bX_t),\overline \eta_t\pf \pf & \leqslant & C_3 \alpha(N) \po 1 +  \int_0^t e^{-\kappa s}\dd s\pf\,.
\end{eqnarray*}
\end{cor}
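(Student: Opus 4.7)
The plan is to approximate both sides by their discrete-time counterparts already analysed in Section~\ref{Subsec:propchaos} and Corollary~\ref{Cor-concl3}, then let the timestep vanish. Fix $t>0$ (the case $t=0$ reduces to \cite[Theorem~1]{FournierGuillin}), and for $n$ large set $\gamma_n = t/n \leqslant \gamma_0$, $m_n = n$, so that $m_n \gamma_n = t$. Let $(\X_k^{(n)})_{k\in\N}$ denote a Markov chain with kernel $R_{N,\gamma_n}$ and initial law $\eta_0^{\otimes N}$, and $(\eta_k^{(n)})_{k\in\N}$ the associated non-linear chain starting from $\eta_0$. The key elementary ingredient is the Riemann convergence $\gamma_n \sum_{s=1}^{m_n}(1-\gamma_n\kappa)^{s-1} = \kappa^{-1}(1 - (1-\gamma_n\kappa)^{m_n}) \longrightarrow \int_0^t e^{-\kappa s}\dd s$, valid for any real $\kappa$.

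For the first inequality, I would decompose via the triangular inequality
\[\mathcal W_{\rho_k}\po \mathcal Law(\overline X_{1,t},\dots,\overline X_{k,t}), \overline\eta_t^{\otimes k}\pf \ \leqslant \ A_n + B_n + C_n,\]
where $A_n$ is the $\mathcal W_{\rho_k}$-distance between the $k$-marginals of $\eta_0^{\otimes N}P_{N,t}$ and $\eta_0^{\otimes N}R_{N,\gamma_n}^{m_n}$, $B_n$ the distance between the $k$-marginal of $\eta_0^{\otimes N}R_{N,\gamma_n}^{m_n}$ and $(\eta_{m_n}^{(n)})^{\otimes k}$, and $C_n$ the distance between $(\eta_{m_n}^{(n)})^{\otimes k}$ and $\overline\eta_t^{\otimes k}$. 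Term $B_n$ is handled directly by Corollary~\ref{Cor-Chaos} and tends to the desired $C_2 k\alpha(N)\int_0^t e^{-\kappa s}\dd s$. Term $C_n$ equals $k\mathcal W_\rho(\eta_{m_n}^{(n)}, \overline\eta_t)$ by the tensorization identity from the proof of Corollary~\ref{Cor-nonlineairetempslong}, and vanishes by Corollary~\ref{Cor-concl3} thanks to its $\sqrt{\gamma_n}$ prefactor. The only non-immediate step is $A_n$: the random-permutation symmetrization used in Corollary~\ref{Cor-Chaos} shows that for any exchangeable $\mu,\nu\in\mathcal P(\T^{dN})$ with $k$-marginals $\mu_k,\nu_k$, one has $\mathcal W_{\rho_k}(\mu_k,\nu_k) \leqslant (k/N)\mathcal W_{\rho_N}(\mu,\nu)$; Proposition~\ref{Prop-continu} then yields $A_n \leqslant k\sqrt{\gamma_n}\,C_5\gamma_n\sum_{s=1}^{m_n}(1-\gamma_n\kappa)^{s-1} \to 0$. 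Letting $n\to\infty$ concludes.

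For the second inequality, I would realize on a common probability space an optimal $\rho_N$-coupling of $\bX_t \sim \eta_0^{\otimes N}P_{N,t}$ with $\X_{m_n}^{(n)} \sim \eta_0^{\otimes N}R_{N,\gamma_n}^{m_n}$ and apply pathwise
\[\mathcal W_\rho\po \pi(\bX_t),\overline\eta_t\pf \ \leqslant \ \mathcal W_\rho\po \pi(\bX_t),\pi(\X_{m_n}^{(n)})\pf + \mathcal W_\rho\po \pi(\X_{m_n}^{(n)}),\eta_{m_n}^{(n)}\pf + \mathcal W_\rho\po \eta_{m_n}^{(n)},\overline\eta_t\pf.\]
Taking expectations, the first term is bounded by $(1/N)\mathcal W_{\rho_N}(\eta_0^{\otimes N}P_{N,t},\eta_0^{\otimes N}R_{N,\gamma_n}^{m_n})$ (since pairing $\overline X_{i,t}$ with $X^{(n)}_{i,m_n}$ defines a coupling of the empirical measures of cost $\rho_N/N$), hence vanishes by Proposition~\ref{Prop-continu}; the second is directly controlled by Proposition~\ref{Prop-chaos}, producing the announced limit $C_3\alpha(N)(1 + \int_0^t e^{-\kappa s}\dd s)$; the third is deterministic and vanishes by Corollary~\ref{Cor-concl3}.

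The only conceptual step that is not pure bookkeeping is the $(k/N)$-reduction from $\mathcal W_{\rho_N}$-distances on full systems to $\mathcal W_{\rho_k}$-distances on $k$-marginals, and this is immediate from the symmetrization trick already used for Corollary~\ref{Cor-Chaos}; everything else is an assembly of already-established discrete-time estimates combined with Riemann-sum convergence.
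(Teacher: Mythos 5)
Your proof is correct and follows essentially the same route as the paper: decompose via the triangle inequality through the discrete-time system with timestep $t/m$, control the three terms by Proposition~\ref{Prop-continu} (via the $1/N$, resp.\ $k/N$, coupling reduction), Proposition~\ref{Prop-chaos} (resp.\ Corollary~\ref{Cor-Chaos}), and Corollary~\ref{Cor-concl3}, then let $m\to\infty$ using the Riemann-sum limit. The paper leaves the first inequality as "similar, with Corollary~\ref{Cor-Chaos}"; your explicit exchangeability/symmetrization step for the $k$-marginal comparison is exactly the intended argument.
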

\begin{proof}
As shown in the proof of Proposition \ref{Prop-chaos}, if $(\X,\Y)$ is an optimal coupling of $\mu$ and $\nu$,
\[\mathbb E\po \mathcal W_{\rho}\po \pi(\X),\pi(\Y)\pf \pf \ \leqslant \ \frac1N \mathcal W_{\rho_N}(\mu,\nu)\,.\]
Thus, considering  a time step $\gamma = t/m$, $m\in\N$, we decompose
\[\mathcal W_\rho \po \pi(\bX_t),\overline \eta_t\pf  \ \leqslant\  \mathcal W_\rho \po \pi(\bX_t),\pi(\X_{m})\pf  + \mathcal W_\rho \po \pi(\X_m), \eta_m\pf  + \mathcal W_\rho \po \eta_m,\overline \eta_t\pf\,, \]
take the expectation, apply Propositions \ref{Prop-chaos} and \ref{Prop-continu} and Corollary \ref{Cor-concl3}  and let $m$ go to infinity. This proves the second point, and the proof of the first one is similar, with  Corollary \ref{Cor-Chaos}.
\end{proof}

Up to now, we have sent either $N$ or $\gamma$ to their limit. When $\kappa>0$, if we let $t=m\gamma$ go to infinity at fixed $N$ and $\gamma$, we recover results on the equilibria of the processes. Indeed, note that Corollary \eqref{Cor-nonlineairetempslong} together with the Banach fixed-point theorem imply that $n\mapsto \eta_n$ admits a limit which is independent from $\eta_0$. Together with Proposition \ref{Prop-EgaliteLoi}, this is the unique QSD of the Markov chain \eqref{Eq-EulerSchemeEDS}. Denote it $\nu_{\gamma}$. Similarly, Proposition \ref{Prop-TempsLong} implies that $R_{N,\gamma}$ admits a unique invariant measure. Denote it $\mu_{\infty,N,\gamma}$, and $\mu_{\infty,N,\gamma}^{(k)}$ its first $kd$-dimensional marginal for $k\in\cco 1,N\ccf$ (i.e. the law of $(X_1,\dots,X_k)$ if $\X\sim\mu_{\infty,N,\gamma}$). Third, Corollary~\ref{Cor-tempslong-continu} implies that $(P_{N,t})_{t\geqslant 0}$ admits a unique invariant measure $\overline \mu_{\infty,N}$.

\begin{cor}\label{Cor-concl2}
If $\kappa>0$, then for all $N\in\N$ and $\gamma\in(0,\gamma_0]$
\begin{eqnarray*}
\mathcal W_{\rho_N} \po \mu_{\infty,N,\gamma}, \overline \mu_{\infty,N}\pf & \leqslant & \sqrt\gamma N \kappa^{-1} C_5  \,,
\end{eqnarray*}
\end{cor}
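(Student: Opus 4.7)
The plan is to apply Proposition~\ref{Prop-continu} with $\mu = \mu_{\infty,N,\gamma}$, the invariant measure of $R_{N,\gamma}$, so that $\mu R_{N,\gamma}^m = \mu_{\infty,N,\gamma}$ for every $m\in\N$, and then to pass to the limit $m\to\infty$ to replace $\mu P_{N,m\gamma}$ by the continuous-time equilibrium $\overline \mu_{\infty,N}$.

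More concretely, first I would invoke Proposition~\ref{Prop-continu} with $\mu = \mu_{\infty,N,\gamma}$ and use invariance to get, for every $m\in\N$,
\[
\mathcal W_{\rho_N}\!\left( \mu_{\infty,N,\gamma}, \mu_{\infty,N,\gamma} P_{N,m\gamma}\right) \ \leqslant \ \sqrt\gamma\, N\, C_5\, \gamma \sum_{s=1}^{m}(1-\gamma\kappa)^{s-1}.
\]
Since $\kappa>0$, the geometric sum on the right is bounded above by $1/\kappa$ uniformly in $m$, giving
\[
\mathcal W_{\rho_N}\!\left( \mu_{\infty,N,\gamma}, \mu_{\infty,N,\gamma} P_{N,m\gamma}\right) \ \leqslant \ \sqrt\gamma\, N\, \kappa^{-1} C_5.
\]

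Second, I would pass to the limit $m\to\infty$. By Corollary~\ref{Cor-tempslong-continu}, for any $\nu\in\mathcal P(\T^{dN})$,
\[
\mathcal W_{\rho_N}\!\left( \mu_{\infty,N,\gamma} P_{N,m\gamma}, \nu P_{N,m\gamma}\right) \ \leqslant \ e^{-\kappa m\gamma}\, \mathcal W_{\rho_N}\!\left( \mu_{\infty,N,\gamma}, \nu\right),
\]
so $(P_{N,t})_{t\ge 0}$ is a contraction in $\mathcal W_{\rho_N}$ with contraction rate going to zero; by the Banach fixed-point theorem applied in the complete metric space $(\mathcal P(\T^{dN}),\mathcal W_{\rho_N})$, there is a unique invariant measure $\overline\mu_{\infty,N}$ and $\mu_{\infty,N,\gamma} P_{N,m\gamma} \to \overline\mu_{\infty,N}$ in $\mathcal W_{\rho_N}$ as $m\to\infty$. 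Using the triangular inequality together with lower semicontinuity (or simply taking $m\to\infty$ directly in the displayed bound, which is uniform in $m$) yields
\[
\mathcal W_{\rho_N}\!\left( \mu_{\infty,N,\gamma}, \overline\mu_{\infty,N}\right) \ \leqslant \ \sqrt\gamma\, N\, \kappa^{-1} C_5,
\]
as announced.

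No step is really an obstacle here: the existence and uniqueness of the two invariant measures has already been established in the preceding sections, and the bound reduces, via the chosen initialization, to the uniform estimate $\gamma\sum_{s=1}^{m}(1-\gamma\kappa)^{s-1}\leqslant 1/\kappa$ that is built into Proposition~\ref{Prop-continu} in the regime $\kappa>0$. The only minor care required is to justify that the passage to the limit $m\to\infty$ in the Wasserstein bound is legitimate, for which the $\mathcal W_{\rho_N}$-contraction of $P_{N,m\gamma}$ toward $\overline\mu_{\infty,N}$ provided by Corollary~\ref{Cor-tempslong-continu} is exactly what is needed.
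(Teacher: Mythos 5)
Your proof is correct and follows essentially the same route as the paper: both rest on Proposition~\ref{Prop-continu} together with the bound $\gamma\sum_{s=1}^m(1-\gamma\kappa)^{s-1}\leqslant\kappa^{-1}$, and on the contraction of Corollary~\ref{Cor-tempslong-continu} to pass to the limit $m\to\infty$. The only difference is that you initialize the discrete chain at its own invariant measure $\mu_{\infty,N,\gamma}$, which removes one term from the triangle inequality and makes the appeal to Proposition~\ref{Prop-TempsLong} (used in the paper to handle an arbitrary initial law $\eta_0^{\otimes N}$) unnecessary — a slight streamlining rather than a different argument.
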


\begin{cor}\label{Cor-concl1}
If $\kappa>0$, then for all $N\in\N$, $k\in\cco 1,N\ccf$ and $\gamma\in(0,\gamma_0]$, first,
\begin{eqnarray*}
\mathcal W_{\rho_k} \po \mu_{\infty,N,\gamma}^{(k)},\nu_{\gamma}^{\otimes k}\pf & \leqslant & \kappa^{-1} C_2 k \alpha(N) \,,
\end{eqnarray*}
and second, 
\begin{eqnarray*}
\mathbb E_{\mu_{\infty,N,\gamma}}\po \mathcal W_\rho \po \pi(\X),\nu_{\gamma}\pf \pf & \leqslant & \kappa^{-1} C_3 \alpha(N) \,.
\end{eqnarray*}
\end{cor}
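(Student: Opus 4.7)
The plan is to let the simulation time $m\to\infty$ in Corollary \ref{Cor-Chaos} and in the second inequality of Proposition \ref{Prop-chaos}, after carefully choosing the initial law so that the non-linear chain stays put. More precisely, I would start the particle system from $\X_0\sim \nu_\gamma^{\otimes N}$, where $\nu_\gamma$ is the QSD of the Euler scheme, so that the corresponding non-linear chain $(\eta_n)_{n\in\N}$ from Section \ref{SubSec-DefProcess} satisfies $\eta_n=\nu_\gamma$ for every $n\in\N$. This uses that, by Corollary \ref{Cor-nonlineairetempslong} and the Banach fixed point theorem, $\nu_\gamma$ is the unique fixed point of the non-linear evolution $\eta\mapsto \eta Q_\eta$ (and thus the unique QSD of the Euler scheme by Proposition \ref{Prop-EgaliteLoi}).

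With this choice of initial distribution, Corollary \ref{Cor-Chaos} and the bound $\gamma \sum_{s=1}^m(1-\gamma\kappa)^{s-1}\leqslant 1/\kappa$ (valid since $\kappa>0$) give, for every $m\in\N$ and $k\in\cco 1,N\ccf$,
\[\mathcal W_{\rho_k}\po \mathcal Law(X_{1,m},\dots,X_{k,m}),\nu_\gamma^{\otimes k}\pf \ \leqslant \ \kappa^{-1} C_2 k \alpha(N)\,,\]
and similarly the second inequality of Proposition \ref{Prop-chaos} yields
\[\mathbb E\po \mathcal W_\rho(\pi(\X_m),\nu_\gamma)\pf \ \leqslant \ C_3 \alpha(N)\po 1+\kappa^{-1}\pf\,,\]
uniformly in $m$. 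It only remains to pass to the limit on the left-hand sides. For the first statement, I would use that the kernel $R=R_{N,\gamma}$ is invariant under permutations of the $N$ particles, hence by uniqueness of the invariant measure, $\mu_{\infty,N,\gamma}$ is exchangeable; the same holds for $\nu_\gamma^{\otimes N} R^m$ since $\nu_\gamma^{\otimes N}$ is exchangeable. By the remark following the proof of Corollary \ref{Cor-nonlineairetempslong}, applied to the $k$-dimensional marginals of exchangeable measures, one has
\[\mathcal W_{\rho_k}\po \mathcal Law(X_{1,m},\dots,X_{k,m}),\mu_{\infty,N,\gamma}^{(k)}\pf \ \leqslant \ \frac{k}{N}\mathcal W_{\rho_N}\po \nu_\gamma^{\otimes N}R^m,\mu_{\infty,N,\gamma}\pf\,,\]
which tends to $0$ as $m\to\infty$ by Proposition \ref{Prop-TempsLong}. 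Combining with the triangle inequality gives the first claimed bound.

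For the second statement, I would use that $x\mapsto \mathcal W_\rho(\pi(x),\nu_\gamma)$ is $1/N$-Lipschitz on $\T^{dN}$ with respect to $\rho_N$, since $\mathcal W_\rho(\pi(x),\pi(y))\leqslant \frac{1}{N}\rho_N(x,y)$ (via the natural coupling pairing $x_i$ with $y_i$). Hence by Kantorovich duality,
\[\left|\mathbb E\po \mathcal W_\rho(\pi(\X_m),\nu_\gamma)\pf - \mathbb E_{\mu_{\infty,N,\gamma}}\po \mathcal W_\rho(\pi(\X),\nu_\gamma)\pf\right| \ \leqslant \ \frac{1}{N}\mathcal W_{\rho_N}\po \nu_\gamma^{\otimes N}R^m,\mu_{\infty,N,\gamma}\pf\,,\]
which again vanishes as $m\to\infty$ by Proposition \ref{Prop-TempsLong}, so that the $m$-uniform bound above passes to the limit (with the constant $C_3$ suitably redefined to absorb the factor $1+\kappa^{-1}$ into $\kappa^{-1}C_3$ up to a harmless change). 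The main, and essentially only, non-routine point is to secure the passage to the limit, i.e.\ the exchangeability of $\mu_{\infty,N,\gamma}$ and the Lipschitz regularity of $x\mapsto \mathcal W_\rho(\pi(x),\nu_\gamma)$; everything else is a direct reuse of the results already established.
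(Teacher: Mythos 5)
Your argument is correct and follows essentially the same route as the paper: a triangle inequality at a finite time $m$, the uniform-in-time bounds of Proposition~\ref{Prop-chaos} and Corollary~\ref{Cor-Chaos}, and the contraction of Proposition~\ref{Prop-TempsLong} to pass to the stationary limit $m\to\infty$. Your only variation --- starting from $\nu_\gamma^{\otimes N}$ so that the non-linear chain is stationary, which removes the term the paper would handle via Corollary~\ref{Cor-nonlineairetempslong} --- is a mild streamlining, and the extra details you supply (exchangeability of $\mu_{\infty,N,\gamma}$, the $1/N$-Lipschitz property of $x\mapsto \mathcal W_\rho(\pi(x),\nu_\gamma)$, and absorbing the factor $1+\kappa^{-1}$ into the constant, which is legitimate since $\kappa\leqslant c_1$) simply make explicit what the paper leaves implicit.
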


\begin{proof}[Proofs of Corollaries \ref{Cor-concl2} and \ref{Cor-concl1}]
Considering any $\eta_0\in\mathcal P(\T^d)$ and $m\in\N$,
\begin{multline*}
\mathcal W_{\rho_N} \po \mu_{\infty,N,\gamma}, \overline \mu_{\infty,N}\pf \ \leqslant \  \mathcal W_{\rho_N} \po \mu_{\infty,N,\gamma}, \eta_0^{\otimes N} R^m\pf  + \mathcal W_{\rho_N} \po \eta_0^{\otimes N} R^m, \eta_0^{\otimes N} P_{\gamma m}\pf\\ + \mathcal W_{\rho_N} \po \eta_0^{\otimes N} P_{\gamma m}, \overline \mu_{\infty,N}\pf 
\end{multline*}
Apply Proposition \ref{Prop-TempsLong} with $\mu = \mu_{\infty,N,\gamma}$ and $\nu = \eta_0^{\otimes N}$, Corollary \ref{Cor-tempslong-continu} with the same $\nu$ and with $\mu = \overline \mu_{\infty,N}$, and Proposition \ref{Prop-continu}. Letting $m$ go to infinity concludes the proof of Corollary \ref{Cor-concl2}. The proof of Corollary \ref{Cor-concl1} is similar (based on Proposition \ref{Prop-chaos} and Corollary~\ref{Cor-Chaos}, like Corollary \ref{Cor-chaos-continu}).
\end{proof}

Next, we can send two parameters to their limit. Sending $N$ to infinity and $\gamma$ to zero, we get the long time convergence of the non-linear process $(\overline Y_t)_{t\geqslant 0}$ introduced in Section~\ref{Sec-continu} (or, equivalently, of the process $Z$ solving \eqref{Eq-EDS} conditionned not to be dead):

\begin{cor}\label{Cor-tempslongContinuNonLin}
Let $(\overline \eta_t)_{t\geqslant 0}$ be such as defined in Section \ref{SubSec-DefProcess}, and $(\hat \eta_t)_{t\geqslant 0}$ be similarly defined but with a different initial distribution $\hat\eta_0 \in \mathcal P(\T^d) $. For all $t\geqslant 0$,
\[ \mathcal W_{\rho}\po \overline \eta_t,\hat \eta_t\pf \ \leqslant \ e^{-\kappa t} \mathcal W_{\rho}(\overline \eta_0,\hat\eta_0)\,.\]
\end{cor}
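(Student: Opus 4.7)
The plan is to mimic the proof of Corollary \ref{Cor-nonlineairetempslong} almost verbatim, simply replacing the discrete-time ingredients by their continuous-time analogues. Namely, I would use Corollary \ref{Cor-tempslong-continu} in place of Proposition \ref{Prop-TempsLong} (to get contraction of the $N$-particle semi-group $P_{N,t}$ with rate $e^{-\kappa t}$), and Corollary \ref{Cor-chaos-continu} in place of Proposition \ref{Prop-chaos} (to control propagation of chaos at time $t$ for the continuous-time particle system). The crucial tensorization identity
\[\mathcal W_{\rho_N}\po\mu^{\otimes N},\nu^{\otimes N}\pf \ = \ N \mathcal W_{\rho}\po\mu,\nu\pf,\]
established at the start of the proof of Corollary \ref{Cor-nonlineairetempslong}, will again be used to lift the one-particle problem to the $N$-particle level (and then take $N\to\infty$ to let the chaos error vanish).

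More precisely, let $\overline \eta_0, \hat \eta_0 \in \mathcal P(\T^d)$ and fix $N\in\N_*$. By the tensorization identity and the triangle inequality,
\begin{align*}
N\mathcal W_{\rho}\po \overline\eta_t, \hat\eta_t\pf \ =\ \mathcal W_{\rho_N}\po \overline\eta_t^{\otimes N}, \hat\eta_t^{\otimes N}\pf
&\leqslant\ \mathcal W_{\rho_N}\po \overline\eta_t^{\otimes N},\overline\eta_0^{\otimes N} P_{N,t}\pf\\
&\quad +\ \mathcal W_{\rho_N}\po \overline\eta_0^{\otimes N} P_{N,t}, \hat\eta_0^{\otimes N} P_{N,t}\pf\\
&\quad +\ \mathcal W_{\rho_N}\po \hat\eta_0^{\otimes N} P_{N,t}, \hat\eta_t^{\otimes N}\pf .
\end{align*}
The middle term is bounded by Corollary \ref{Cor-tempslong-continu} by $e^{-\kappa t} \mathcal W_{\rho_N}(\overline\eta_0^{\otimes N}, \hat\eta_0^{\otimes N}) = N e^{-\kappa t} \mathcal W_\rho(\overline\eta_0, \hat\eta_0)$. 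The two outer terms are exactly propagation-of-chaos errors for the continuous-time particle system started from $\overline\eta_0^{\otimes N}$ (resp.\ $\hat\eta_0^{\otimes N}$), and each of them is bounded by $C_2 N \alpha(N) \int_0^t e^{-\kappa s}\,\dd s$ via Corollary \ref{Cor-chaos-continu} taken at $k=N$.

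Dividing by $N$ yields
\[\mathcal W_{\rho}\po \overline\eta_t, \hat\eta_t\pf \ \leqslant\ e^{-\kappa t}\mathcal W_{\rho}\po \overline\eta_0,\hat\eta_0\pf\ +\ 2 C_2 \alpha(N) \int_0^t e^{-\kappa s}\,\dd s\,,\]
and since the left-hand side does not depend on $N$, letting $N\to \infty$ (so that $\alpha(N)\to 0$) gives the claim. No step is genuinely difficult; the only point worth checking is that the range $k\in\cco 1,N\ccf$ in Corollary \ref{Cor-chaos-continu} indeed accommodates $k=N$, which it does. The argument works for any $\kappa\in\R$: when $\kappa\leqslant 0$ the conclusion is vacuous up to the diameter of $\T^d$, but the same chain of inequalities remains valid.
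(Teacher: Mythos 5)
Your proof is correct, but it takes a different route from the paper. The paper deduces this corollary from the discrete-time non-linear contraction: it writes $\gamma=t/m$, invokes Corollary \ref{Cor-nonlineairetempslong} (which gives the factor $(1-\gamma\kappa)^m\to e^{-\kappa t}$) and controls the two discretization errors $\mathcal W_\rho(\eta_m,\overline\eta_{m\gamma})$, $\mathcal W_\rho(\tilde\eta_m,\hat\eta_{m\gamma})$ by Corollary \ref{Cor-concl3}, letting $m\to\infty$; so the limit taken is $\gamma\to 0$ at $N$ already infinite. You instead stay in continuous time and send $N\to\infty$: tensorization $\mathcal W_{\rho_N}(\mu^{\otimes N},\nu^{\otimes N})=N\mathcal W_\rho(\mu,\nu)$, contraction of the particle semigroup $P_{N,t}$ (Corollary \ref{Cor-tempslong-continu}) for the middle term, and propagation of chaos (Corollary \ref{Cor-chaos-continu} with $k=N$, which its statement indeed permits) for the two outer terms, whose contribution $2C_2\alpha(N)\int_0^t e^{-\kappa s}\dd s$ vanishes as $N\to\infty$. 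This is exactly the strategy the paper uses to prove the discrete-time analogue, Corollary \ref{Cor-nonlineairetempslong}, so your argument is the natural "other diagonal" of Figure \ref{Figure_Corollaires}. Both deductions are legitimate and non-circular, since Corollaries \ref{Cor-tempslong-continu} and \ref{Cor-chaos-continu} are established before this statement and do not rely on it; the paper's route is a one-line reduction once Corollary \ref{Cor-concl3} is available, while yours avoids any explicit use of the time-discretization error for the non-linear flow at the level of this proof (though it enters indirectly through the proofs of the continuous-time corollaries you invoke). Your closing remarks (validity of $k=N$, and of the inequality chain for $\kappa\leqslant 0$, where the propagation-of-chaos term is merely finite for fixed $t$) are accurate.
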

\begin{proof}
Thanks to Corollary \ref{Cor-concl3}, let $\gamma=t/m$ vanish in Corollary \ref{Cor-nonlineairetempslong}.
\end{proof}

In particular, if $\hat \eta_0$ is the QSD $\nu_*$ , by definition, $\hat \eta_t = \nu_*$ for all $t\geqslant 0$, so that Corollary~\ref{Cor-tempslongContinuNonLin} yields the uniqueness of the QSD and the exponential convergence of $\mathcal Law(Z_t\ |\ T>t)$ toward $\nu_*$ (which is a result in the spirit of \cite{ChampVilK2018,CV2020,DelMoralVillemonais2018,Bansaye}).

Now, at a fixed $\gamma> 0$, letting $t$ and $N$ go to infinity, we obtain an error bound between the QSD $\nu_*$ of the continus process \eqref{Eq-EDS} and the QSD $\nu_\gamma$ of the discrete scheme.
\begin{cor}\label{Cor-QSDgamma}
If $\kappa>0$, then for all  $\gamma\in(0,\gamma_0]$
\begin{eqnarray*}
\mathcal W_{\rho} \po \nu_{\gamma}, \nu_*\pf & \leqslant & \sqrt\gamma \kappa^{-1} C_6  \,,
\end{eqnarray*}
\end{cor}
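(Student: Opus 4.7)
The plan is to combine Corollary~\ref{Cor-concl3} (the error between the discrete-time non-linear chain and its continuous-time analogue on a finite time horizon) with Corollary~\ref{Cor-tempslongContinuNonLin} (the exponential long-time contraction of the continuous-time non-linear semigroup), and to let time go to infinity. The crucial point is to initialise both flows at the same measure that is \emph{already} a fixed point of the discrete dynamics, namely $\nu_\gamma$ itself, so that the discrete flow remains constant equal to $\nu_\gamma$ while the continuous one converges to $\nu_*$.

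More precisely, I would take $\eta_0 = \overline\eta_0 = \nu_\gamma$. By definition of $\nu_\gamma$ and Proposition~\ref{Prop-EgaliteLoi}, the discrete non-linear flow $(\eta_m)_{m\in\N}$ satisfies $\eta_m = \nu_\gamma$ for every $m\in\N$. Then, for any $m\in\N_*$, the triangular inequality gives
\[
\mathcal W_\rho\po \nu_\gamma,\nu_*\pf \ \leqslant \ \mathcal W_\rho\po \eta_m,\overline\eta_{m\gamma}\pf + \mathcal W_\rho\po \overline\eta_{m\gamma},\nu_*\pf.
\]
For the first term, Corollary~\ref{Cor-concl3} yields
\[
\mathcal W_\rho\po \eta_m,\overline\eta_{m\gamma}\pf \ \leqslant \ \sqrt\gamma\, C_6\, \gamma \sum_{s=1}^m (1-\gamma\kappa)^{s-1} \ \leqslant \ \sqrt\gamma\, \kappa^{-1} C_6,
\]
uniformly in $m$, since $\kappa>0$. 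For the second term, Corollary~\ref{Cor-tempslongContinuNonLin} applied to the two initial conditions $\overline\eta_0=\nu_\gamma$ and $\hat\eta_0=\nu_*$ (noting that $\hat\eta_t=\nu_*$ for all $t$ by definition of the QSD) gives
\[
\mathcal W_\rho\po \overline\eta_{m\gamma},\nu_*\pf \ \leqslant \ e^{-\kappa m\gamma}\, \mathcal W_\rho\po \nu_\gamma,\nu_*\pf.
\]

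Finally, I would let $m\to\infty$. Since $\rho$ is bounded on $\T^d$, $\mathcal W_\rho(\nu_\gamma,\nu_*)$ is finite, so the factor $e^{-\kappa m\gamma}$ kills the second term and we obtain the announced bound. There is no real obstacle here; the only subtlety is to choose the initial distribution carefully so that the discrete flow is stationary (thus avoiding any need to control the long-time behaviour of $(\eta_m)$ separately) while the continuous flow does the contracting work.
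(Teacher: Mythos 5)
Your proof is correct and follows essentially the same route as the paper: the paper's own argument also lets $m\to\infty$ in Corollary~\ref{Cor-concl3} while invoking Corollaries~\ref{Cor-nonlineairetempslong} and \ref{Cor-tempslongContinuNonLin} with an equilibrium as one of the initial conditions. Your choice $\eta_0=\nu_\gamma$ (so the discrete flow is frozen and Corollary~\ref{Cor-nonlineairetempslong} is only needed implicitly, through the existence of the fixed point) is just a slight streamlining of that same argument.
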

\begin{proof}
Thanks to Corollaries \ref{Cor-nonlineairetempslong} and \ref{Cor-tempslongContinuNonLin} (applied with one of the initial condition being the equilibrium), let $m$ go to infinity in Corollary \ref{Cor-concl3}.
\end{proof}

Finally, letting $\gamma$ vanish and $t$ go to infinity at a fixed $N\in\N$, we obtain a propagation of chaos result at stationarity (as established \nv{first in \cite{Asselah}, and more recently with a CLT} in  \cite{LelievreReygner} in the case of a finite state space) for the continuous time system of interacting particle $(\bX_t)_{t\geqslant 0}$ introduced in Section \ref{Sec-continu}.

\begin{cor}\label{Cor-PropChaosEq}
If $\kappa>0$ and if $\bX$ is a random variable with law $\overline \mu_{\infty,N}$, then for all $N\in\N$ and $k\in\cco 1,N\ccf$, 
\begin{eqnarray*}
\mathcal W_{\rho_N} \po \mathcal Law(X_{1},\dots,X_{k}),\nu_*^{\otimes k}\pf & \leqslant & \kappa^{-1} C_2 k \alpha(N) \,,
\end{eqnarray*}
and second, 
\begin{eqnarray*} 
\mathbb E\po \mathcal W_\rho \po \pi(\bX),\nu_*\pf \pf & \leqslant & C_3 \alpha(N) \po 1 +  \kappa^{-1}\pf\,.
\end{eqnarray*}
\end{cor}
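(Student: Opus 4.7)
The plan is to prove both bounds in Corollary~\ref{Cor-PropChaosEq} by combining the already-established discrete-time stationary results of Corollary~\ref{Cor-concl1} with the discrete-to-continuous stationary comparison of Corollary~\ref{Cor-concl2} and the QSD comparison of Corollary~\ref{Cor-QSDgamma}, then letting $\gamma\to 0$. This is exactly the same pattern as the proofs of Corollaries~\ref{Cor-tempslongContinuNonLin} and \ref{Cor-QSDgamma}: the continuous-time stationary analogues are recovered from their discrete-time counterparts by sending the timestep to zero.

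For the first bound, let $\bY\sim\mu_{\infty,N,\gamma}$ with $(\bX,\bY)$ an optimal coupling of $\overline\mu_{\infty,N}$ and $\mu_{\infty,N,\gamma}$. The triangle inequality gives
\[
\mathcal W_{\rho_k}\bigl(\mathcal Law(X_1,\dots,X_k),\nu_*^{\otimes k}\bigr) \ \leqslant\ A_1 + A_2 + A_3,
\]
where $A_1$ compares the $k$-marginals of $\overline\mu_{\infty,N}$ and $\mu_{\infty,N,\gamma}$, $A_2$ is the stationary propagation of chaos of Corollary~\ref{Cor-concl1}, and $A_3 = \mathcal W_{\rho_k}(\nu_\gamma^{\otimes k},\nu_*^{\otimes k})$. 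The term $A_1$ is bounded by $\mathcal W_{\rho_N}(\overline\mu_{\infty,N},\mu_{\infty,N,\gamma})$ via the projection argument (any coupling of the full measures yields a coupling of the marginals), then by $\sqrt\gamma N\kappa^{-1}C_5$ using Corollary~\ref{Cor-concl2}. For $A_3$, the tensorization identity \eqref{lemme} gives $A_3 = k\mathcal W_\rho(\nu_\gamma,\nu_*)\leqslant k\sqrt\gamma\kappa^{-1}C_6$ by Corollary~\ref{Cor-QSDgamma}. Since both $A_1$ and $A_3$ vanish as $\gamma\to 0$ while $A_2\leqslant\kappa^{-1}C_2 k\alpha(N)$ is $\gamma$-independent, the limit $\gamma\to 0$ yields the first bound.

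For the second bound, using the same optimal coupling $(\bX,\bY)$,
\[
\mathbb E\bigl[\mathcal W_\rho(\pi(\bX),\nu_*)\bigr] \ \leqslant\ \mathbb E\bigl[\mathcal W_\rho(\pi(\bX),\pi(\bY))\bigr] + \mathbb E\bigl[\mathcal W_\rho(\pi(\bY),\nu_\gamma)\bigr] + \mathcal W_\rho(\nu_\gamma,\nu_*).
\]
The first term is controlled by $\tfrac{1}{N}\mathcal W_{\rho_N}(\overline\mu_{\infty,N},\mu_{\infty,N,\gamma})\leqslant\sqrt\gamma\kappa^{-1}C_5$, using the empirical-measure estimate $\mathbb E[\mathcal W_\rho(\pi(\bX),\pi(\bY))]\leqslant\tfrac{1}{N}\mathcal W_{\rho_N}(\bX,\bY)$ already exploited in the proofs of Corollaries~\ref{Cor-Chaos} and \ref{Cor-chaos-continu}. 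The second term is bounded by $C_3\alpha(N)(1+\kappa^{-1})$, which is the natural stationary form of Proposition~\ref{Prop-chaos} (bounding $\gamma\sum_{s=1}^m(1-\gamma\kappa)^{s-1}$ by $1/\kappa$), and the third term is $\leqslant\sqrt\gamma\kappa^{-1}C_6$. Letting $\gamma\to 0$ concludes.

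No substantial obstacle is expected: every ingredient has already been proven. The only mildly delicate point is ensuring that the vanishing-in-$\gamma$ terms ($A_1$, $A_3$, the first and third terms in the second bound) really do go to zero \emph{independently of $N$ fixed}, which is immediate from the explicit bounds $\sqrt\gamma N\kappa^{-1}C_5$ and $\sqrt\gamma\kappa^{-1}C_6$ since $N$ is held fixed while $\gamma\to 0$. Thus the proof reduces to writing out these two triangle inequalities and invoking the previously established estimates.
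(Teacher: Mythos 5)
Your proof is correct, but it takes a different path through the diagram of Figure~\ref{Figure_Corollaires} than the paper does. The paper obtains Corollary~\ref{Cor-PropChaosEq} by letting $t\to\infty$ in the continuous-time propagation of chaos at finite time (Corollary~\ref{Cor-chaos-continu}), using the uniform-in-$N$ long-time contraction of $(P_{N,t})_{t\geqslant 0}$ (Corollary~\ref{Cor-tempslong-continu}) to pass from $\eta_0^{\otimes N}P_{N,t}$ to $\overline\mu_{\infty,N}$ and the long-time convergence of the non-linear flow (Corollary~\ref{Cor-tempslongContinuNonLin}) to pass from $\overline\eta_t$ to $\nu_*$; i.e. it works at $\gamma=0$ and sends $t\to\infty$. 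You instead work at stationarity for fixed $\gamma$ and send $\gamma\to 0$: you combine the discrete stationary propagation of chaos (Corollary~\ref{Cor-concl1}) with the stationary discrete-to-continuous comparisons $\mathcal W_{\rho_N}(\mu_{\infty,N,\gamma},\overline\mu_{\infty,N})\leqslant \sqrt\gamma N\kappa^{-1}C_5$ (Corollary~\ref{Cor-concl2}) and $\mathcal W_\rho(\nu_\gamma,\nu_*)\leqslant\sqrt\gamma\kappa^{-1}C_6$ (Corollary~\ref{Cor-QSDgamma}), all $\gamma$-dependent terms vanishing at fixed $N$. Each step you invoke is indeed available (the crude bound of the $k$-marginal distance by the full $\mathcal W_{\rho_N}$ distance suffices since it is killed in the limit, and the empirical-measure estimate $\mathbb E[\mathcal W_\rho(\pi(\X),\pi(\Y))]\leqslant\frac1N\mathbb E[\rho_N(\X,\Y)]$ is exactly the one from Proposition~\ref{Prop-chaos}), and both routes yield the same constants. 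The paper's route has the aesthetic advantage of never reintroducing the time-discretization once the continuous-time objects are built, while yours is slightly more economical in that it recycles the already-stationary Corollaries~\ref{Cor-concl1}, \ref{Cor-concl2} and \ref{Cor-QSDgamma} and avoids redoing a $t\to\infty$ limit; as a minor remark, your bound $C_3\alpha(N)(1+\kappa^{-1})$ for the second term is the honest stationary form of \eqref{Eq-PropChaos2} and matches the statement to be proven, even though Corollary~\ref{Cor-concl1} as written quotes the slightly smaller $\kappa^{-1}C_3\alpha(N)$.
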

\begin{proof}
The proof is similar to Corollary \ref{Cor-chaos-continu}, letting $t$ go to infinity in  Corollary \ref{Cor-chaos-continu} thanks to Corollaries  \ref{Cor-tempslong-continu} and \ref{Cor-tempslongContinuNonLin}.
\end{proof}

\nvdd{Remark that our results at stationarity (Corollaries~\ref{Cor-concl2}, \ref{Cor-concl1}, \ref{Cor-QSDgamma} and \ref{Cor-PropChaosEq}) all require the perturbative condition $\kappa>0$. Yet, propagation of chaos at stationarity  for the continuous time process follows from the works \cite{Asselah,LelievreReygner,MicloDelMoral,DelMoralGuionnet,Rousset} in a much broader (non-perturbative) framework (and, although it doesn't seem to have been studied yet,  the situation should be similar for error bounds in $\gamma$ rather than $N$). As discussed in Section~\ref{Sec:previous_works}, error bounds on $N$ (and  possibly $\gamma$) that are uniform in time can be obtained thanks to the long-time convergence of the limit ($N=+\infty$) non-linear process. In our case, when $\kappa>0$, this long-time convergence  follows from the (uniform in $N$) long-time convergence of the particle system (whether is is possible to obtain the latter from the former is unclear), but it holds in more general cases (see \cite{ChampVilK2018,CV2020,DelMoralVillemonais2018,Bansaye} and references within) and, in those cases, results similar  to Corollaries~\ref{Cor-concl2}, \ref{Cor-concl1}, \ref{Cor-QSDgamma} and \ref{Cor-PropChaosEq} should hold. This question is out of the scope of the present work.}

\bigskip

\nv{
All our results can  be summarised in the following diagram :
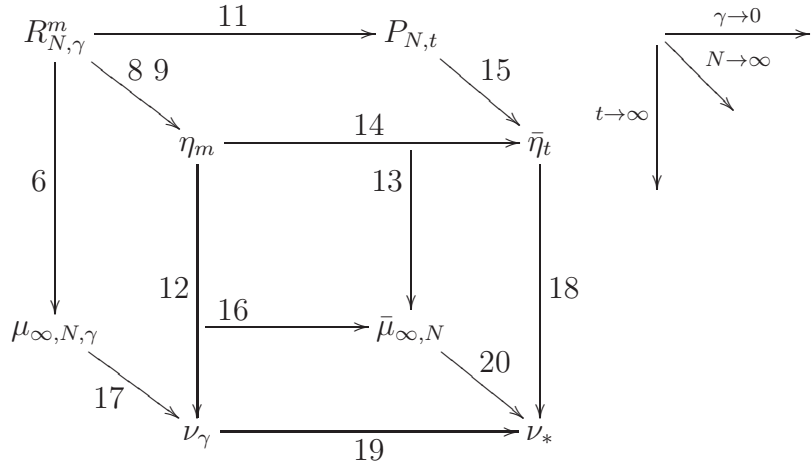
\begin{figure}[h]
\begin{center}
\[
\xymatrix{
    R_{N,\gamma}^m \ar[rrr]^{\ref{Prop-continu}} \ar[ddd]_{\ref{Prop-TempsLong}} \ar[dr]^{\ref{Prop-chaos}\ \ref{Cor-Chaos}} &&& P_{N,t} \ar[dr]^{\ref{Cor-chaos-continu}} \ar[ddd]_{\ref{Cor-tempslong-continu}} |!{[dl];[dr]}\hole \\
    & \eta_m \ar[rrr]^{\ref{Cor-concl3}} \ar[ddd]_{\ref{Cor-nonlineairetempslong}} &&& \bar{\eta}_t \ar[ddd]^{\ref{Cor-tempslongContinuNonLin}} \\
		& & & & \\
    \mu_{\infty,N,\gamma} \ar[rrr]^{\ref{Cor-concl2}} |!{[ur];[dr]}\hole \ar[dr]_{\ref{Cor-concl1}} &&& \bar{\mu}_{\infty,N} \ar[rd]^{\ref{Cor-PropChaosEq}} \\
    & \nu_{\gamma} \ar[rrr]_{\ref{Cor-QSDgamma}} &&& \nu_{\ast} \\
  }
\xymatrix{
 \ar[dd]_{t\rightarrow\infty} \ar[dr]^{N\rightarrow\infty} \ar[rr]^{\gamma \rightarrow 0} && \\ && \\ &&
 }
\]
\end{center}
\caption{Summary of the different results. The number on an arrow indicates the number of the Corollary or Proposition where the corresponding quantitative convergence is stated. Vertical, horizontal and diagonal arrows correspond respectively to $t$, $\gamma$ and $N$ going to their limit.}\label{Figure_Corollaires}
\end{figure}
}

Finally, we detail the proof of our main result.

\begin{proof}[Proof of Theorem \ref{Theorem-Main}]
For $\eta_0\in \mathcal P(\T^d)$, let $(\X,\Y)$ be an optimal coupling of $\mu_0 R^{\lfloor t/\gamma\rfloor}$ and $\eta_0^{\otimes N} R^{\lfloor t/\gamma\rfloor}$. As in the proof of Proposition \ref{Prop-chaos},
\[\mathbb E\po \mathcal W_{\rho}\po \pi(\X),\pi(\Y)\pf\pf \ \leqslant \ \frac1N \mathcal W_{\rho_N}\po \mu R^{\lfloor t/\gamma\rfloor},\eta_0^{\otimes N} R^{\lfloor t/\gamma\rfloor}\pf \ \leqslant \ a e^{-\kappa (t-\gamma_0)}\,,\]
where we used Proposition \ref{Prop-TempsLong} and the fact that $\rho_N(x,y)\leqslant Na$ for all $x,y\in\T^{dN}$. Then, by the triangular inequality,
\[\mathcal W_\rho\po \pi( \Y), \nu_*\pf \ \leqslant \ \mathcal W_\rho\po \pi( \Y), \eta_{\lfloor t/\gamma\rfloor}\pf + \mathcal W_\rho\po   \eta_{\lfloor t/\gamma\rfloor} , \nu_\gamma\pf + \mathcal W_\rho\po \nu_\gamma,\nu_*\pf\,.\]
Taking the expectation, applying Proposition \ref{Prop-chaos} and Corollaries \ref{Cor-nonlineairetempslong} (applied with $\tilde \eta_0 = \nu_\gamma$) and \ref{Cor-QSDgamma}, the boundedness of $\rho$ and the equivalence of $\mathcal W_\rho$ and $\mathcal W_1$ concludes.
\end{proof}

\subsection*{Acknowledgements}
Pierre Monmarch\'e thanks Bertrand Cloez for indicating the result stated in Proposition~\ref{Prop-EgaliteLoi}, and more generally for fruitful discussions. He acknowledges partial support by the projects EFI ANR-17-CE40-0030 and METANOLIN of the French National Research Agency.

\bibliographystyle{plain}
\bibliography{../biblio}

\begin{thebibliography}{10}

\bibitem{Asselah}
A.~Asselah, P.~A. Ferrari, and P.~Groisman.
\newblock Quasistationary distributions and {F}leming-{V}iot processes in
  finite spaces.
\newblock {\em J. Appl. Probab.}, 48(2):322--332, 2011.

\bibitem{Bansaye}
V.~Bansaye, B.~Cloez, and P.~Gabriel.
\newblock Ergodic behavior of non-conservative semigroups via generalized
  doeblin’s conditions.
\newblock {\em Acta Applicandae Mathematicae}.

\bibitem{BCV}
M.~{Bena{\"\i}m}, N.~{Champagnat}, and D.~{Villemonais}.
\newblock {Stochastic approximation of quasi-stationary distributions for
  diffusion processes in a bounded domain}.
\newblock {\em arXiv e-prints}, page arXiv:1904.08620, Apr 2019.

\bibitem{benaim2015}
M.~Bena{\"i}m and B.~Cloez.
\newblock A stochastic approximation approach to quasi-stationary distributions
  on finite spaces.
\newblock {\em Electron. Commun. Probab.}, 20:13 pp., 2015.

\bibitem{BenaimCloezPanloup}
M.~Bena{\"i}m, B.~Cloez, and F.~Panloup.
\newblock Stochastic approximation of quasi-stationary distributions on compact
  spaces and applications.
\newblock {\em Ann. Appl. Probab.}, 28(4):2370--2416, 2018.

\bibitem{Burdzy1}
K.~Burdzy, R.~Ho\l~yst, and P.~March.
\newblock A {F}leming-{V}iot particle representation of the {D}irichlet
  {L}aplacian.
\newblock {\em Comm. Math. Phys.}, 214(3):679--703, 2000.

\bibitem{Burdzy2}
K.~{Burdzy}, R.~{Holyst}, D.~{Ingerman}, and P.~{March}.
\newblock {Configurational transition in a Fleming - Viot-type model and
  probabilistic interpretation of Laplacian eigenfunctions}.
\newblock {\em Journal of Physics A Mathematical General}, 29(11):2633--2642,
  Jun 1996.

\bibitem{Guyadersoft}
F.~{Cerou}, B.~{Delyon}, A.~{Guyader}, and M.~{Rousset}.
\newblock {A Central Limit Theorem for Fleming-Viot Particle Systems with Soft
  Killing}.
\newblock {\em arXiv e-prints}, page arXiv:1611.00515, Nov 2016.

\bibitem{ChampVilK2018}
N.~{Champagnat}, K.~{Coulibaly-Pasquier}, and D.~{Villemonais}.
\newblock {Criteria for exponential convergence to quasi-stationary
  distributions and applications to multi-dimensional diffusions}.
\newblock {\em arXiv e-prints}, page arXiv:1603.07909, Mar 2016.

\bibitem{CV2018}
N.~{Champagnat} and D.~{Villemonais}.
\newblock {Convergence of the Fleming-Viot process toward the minimal
  quasi-stationary distribution}.
\newblock {\em arXiv e-prints}, page arXiv:1810.06849, Oct 2018.

\bibitem{CV2020}
N.~Champagnat and D.~Villemonais.
\newblock Practical criteria for $r$-positive recurrence of unbounded
  semigroups.
\newblock {\em Electron. Commun. Probab.}, 25:11 pp., 2020.

\bibitem{CloezThai}
B.~Cloez and M.-N. Thai.
\newblock Quantitative results for the {F}leming-{V}iot particle system and
  quasi-stationary distributions in discrete space.
\newblock {\em Stochastic Process. Appl.}, 126(3):680--702, 2016.

\bibitem{Dawson}
D.~Dawson.
\newblock Measure-valued markov processes.
\newblock 1993.

\bibitem{DelMoralbouquin2}
P.~Del~Moral.
\newblock {\em Feynman-Kac Formulae. Genealogical and Interacting Particle
  Systems with Applications}.
\newblock Springer-Verlag New York, 2004.

\bibitem{DelMoralbouquin}
P.~Del~Moral.
\newblock {\em Mean Field Simulation for Monte Carlo Integration}.
\newblock New York: Chapman and Hall/CRC, 2013.

\bibitem{DelMoralGuionnet}
P.~Del~Moral and A.~Guionnet.
\newblock On the stability of interacting processes with applications to
  filtering and genetic algorithms.
\newblock {\em Annales de l'I.H.P. Probabilit\'es et statistiques},
  37(2):155--194, 2001.

\bibitem{MicloDelMoral2}
P.~Del~Moral and L.~Miclo.
\newblock A moran particle system approximation of feynman–kac formulae.
\newblock {\em Stochastic Processes and their Applications}, 86(2):193 -- 216,
  2000.

\bibitem{MicloDelMoral}
P.~Del~Moral and L.~Miclo.
\newblock Particle approximations of {L}yapunov exponents connected to
  {S}chr\"{o}dinger operators and {F}eynman-{K}ac semigroups.
\newblock {\em ESAIM Probab. Stat.}, 7:171--208, 2003.

\bibitem{DelMoralVillemonais2018}
P.~Del~Moral and D.~Villemonais.
\newblock Exponential mixing properties for time inhomogeneous diffusion
  processes with killing.
\newblock {\em Bernoulli}, 24(2):1010--1032, 05 2018.

\bibitem{Guyaderhard}
B.~{Delyon}, F.~{C{\'e}rou}, A.~{Guyader}, and M.~{Rousset}.
\newblock {A Central Limit Theorem for Fleming-Viot Particle Systems with Hard
  Killing}.
\newblock {\em To appear in Annales de l'IHP (Probability and Statistics)},
  page arXiv:1709.06771, Sep 2017.

\bibitem{EberleGuillinZimmer}
A.~{Eberle}, A.~{Guillin}, and R.~{Zimmer}.
\newblock {Couplings and quantitative contraction rates for Langevin dynamics}.
\newblock {\em arXiv e-prints}, page arXiv:1703.01617, Mar 2017.

\bibitem{ferrari2007}
P.~Ferrari and N.~Maric.
\newblock Quasi stationary distributions and fleming-viot processes in
  countable spaces.
\newblock {\em Electron. J. Probab.}, 12:684--702, 2007.

\bibitem{FerreStoltz}
G.~Ferr{\'e} and G.~Stoltz.
\newblock Error estimates on ergodic properties of discretized feynman–kac
  semigroups.
\newblock {\em Numer. Math.}, 143:261--313, 2019.

\bibitem{FlemingViot}
W.~H. Fleming and M.~Viot.
\newblock Some measure-valued markov processes in population genetics theory.
\newblock {\em Indiana University Mathematics Journal}, 28(5):817--843, 1979.

\bibitem{FournierGuillin}
N.~Fournier and A.~Guillin.
\newblock On the rate of convergence in {W}asserstein distance of the empirical
  measure.
\newblock {\em Probab. Theory Related Fields}, 162(3-4):707--738, 2015.

\bibitem{GrigoKang}
I.~Grigorescu and M.~Kang.
\newblock Hydrodynamic limit for a {F}leming-{V}iot type system.
\newblock {\em Stochastic Process. Appl.}, 110(1):111--143, 2004.

\bibitem{Joulin}
A.~Joulin and Y.~Ollivier.
\newblock Curvature, concentration and error estimates for {M}arkov chain
  {M}onte {C}arlo.
\newblock {\em Ann. Probab.}, 38(6):2418--2442, 2010.

\bibitem{LelievreLeBris}
C.~Le~Bris, T.~Leli{\`e}vre, M.~Luskin, and D.~Perez.
\newblock A mathematical formalization of the parallel replica dynamics.
\newblock {\em Monte Carlo Methods Appl.}, 18(2):119--146, 2012.

\bibitem{LelievreReygner}
T.~Leli\`evre, L.~Pillaud-Vivien, and J.~Reygner.
\newblock Central limit theorem for stationary {F}leming-{V}iot particle
  systems in finite spaces.
\newblock {\em ALEA Lat. Am. J. Probab. Math. Stat.}, 15(2):1163--1182, 2018.

\bibitem{Lobus}
J.-U. L\"{o}bus.
\newblock A stationary {F}leming-{V}iot type {B}rownian particle system.
\newblock {\em Math. Z.}, 263(3):541--581, 2009.

\bibitem{Majka}
M.~B. {Majka}, A.~{Mijatovi{\'c}}, and L.~{Szpruch}.
\newblock {Non-asymptotic bounds for sampling algorithms without
  log-concavity}.
\newblock {\em arXiv e-prints}, page arXiv:1808.07105, Aug 2018.

\bibitem{Euler}
G.N. Milstein and M.~V. Tretyakov.
\newblock {\em Stochastic Numerics for Mathematical Physics}.
\newblock 2004.

\bibitem{Monmarche}
P.~Monmarch{\'e}.
\newblock {Elementary coupling approach for non-linear perturbation of Markov
  processes with mean-field jump mechanims and related problems}.
\newblock {\em ArXiv e-prints}, September 2018.

\bibitem{Moran}
P.~A.~P. Moran.
\newblock Random processes in genetics.
\newblock {\em Mathematical Proceedings of the Cambridge Philosophical
  Society}, 54(1):60–71, 1958.

\bibitem{Norman}
F.~M. Norman.
\newblock Ergodicity of diffusion and temporal uniformity of diffusion
  approximation.
\newblock {\em Journal of Applied Probability}, 14(2):399--404, 1977.

\bibitem{Villemonais2}
W.~O\c{c}afrain and D.~Villemonais.
\newblock Convergence of a non-failable mean-field particle system.
\newblock {\em Stoch. Anal. Appl.}, 35(4):587--603, 2017.

\bibitem{Rousset}
M.~Rousset.
\newblock On the control of an interacting particle estimation of
  {S}chr\"{o}dinger ground states.
\newblock {\em SIAM J. Math. Anal.}, 38(3):824--844, 2006.

\bibitem{Villani}
C.~Villani.
\newblock {\em Optimal transport}, volume 338 of {\em Grundlehren der
  Mathematischen Wissenschaften [Fundamental Principles of Mathematical
  Sciences]}.
\newblock Springer-Verlag, Berlin, 2009.
\newblock Old and new.

\bibitem{Villemonais}
D.~Villemonais.
\newblock General approximation method for the distribution of {M}arkov
  processes conditioned not to be killed.
\newblock {\em ESAIM Probab. Stat.}, 18:441--467, 2014.

\bibitem{Villemonais3}
D.~{Villemonais}.
\newblock {Lower bound for the coarse Ricci curvature of continuous-time pure
  jump processes}.
\newblock {\em arXiv e-prints}, page arXiv:1705.06642, May 2017.

\end{thebibliography}

\end{document}